\theoremstyle{plain}
\newtheorem{theorem}{Theorem}[section]
\newtheorem{cor}[theorem]{Corollary}
\newtheorem{lem}[theorem]{Lemma}
\newtheorem{prop}[theorem]{Proposition}
\theoremstyle{definition}
\newtheorem{example}[theorem]{Example}
\newtheorem{defi}[theorem]{Definition}
\newtheorem{rem}[theorem]{Remark}
\numberwithin{equation}{section}
\numberwithin{figure}{section}
\newcommand{\co}{\colon}
\newcommand{\cyc}{\mathrm{c}}
\newcommand{\ground}{\mathbf{k}}
\newcommand{\g}{\mathfrak{g}}
\newcommand{\h}{\mathfrak{h}}
\renewcommand{\k}{\mathfrak{k}}
\renewcommand{\O}{\mathscr{O}}
\renewcommand{\P}{\mathscr{P}}
\newcommand{\B}{\mathsf{B}}
\newcommand{\E}{\operatorname{E}}
\newcommand{\Ecyc}{\mathscr{E}}
\newcommand{\Com}{\mathscr{C}om}
\DeclareMathOperator{\Hom}{Hom}
\DeclareMathOperator{\MC}{MC}
\DeclareMathOperator{\MCmod}{\mathscr{MC}}
\DeclareMathOperator{\Aut}{Aut}
\DeclareMathOperator{\HAut}{HAut}
\DeclareMathOperator{\sAut}{PAut}
\DeclareMathOperator{\im}{Im}
\DeclareMathOperator{\Ker}{Ker}
\DeclareMathOperator{\ad}{ad}
\DeclareMathOperator{\Dev}{D_{ev}}
\DeclareMathOperator{\Dod}{D_{od}}
\DeclareMathOperator{\id}{id}
\DeclareMathOperator{\Der}{Der}
\DeclareMathOperator{\SDer}{SDer}
\DeclareMathOperator{\CE}{CE}
\DeclareMathOperator{\HCE}{HCE}
\DeclareMathOperator{\Tr}{Tr}
\title{Unimodular homotopy algebras and Chern--Simons theory}
\author{C.~Braun}
\address{Mathematisches Institut\\
Universit\"at M\"unster\\
Einsteinstra{\ss}e 62\\48149 M\"unster\\Germany}
\email{c.braun@uni-muenster.de}
\author{A.~Lazarev}
\address{Department of
Mathematics and Statistics\\
Lancaster University\\
Lancaster LA1 4YF\\UK}
\email{a.lazarev@lancaster.ac.uk}
\thanks{This work was partially supported by EPSRC grants EP/J00877X/1 and EP/J008451/1.}
\thanks{The second author is grateful to T.~Voronov for useful discussions.}
\keywords{$L_\infty$~algebra, odd Laplacian, de~Rham algebra, operad} \subjclass[2010]{16E45, 58A50, 55P62}
\begin{document}

\begin{abstract}
Quantum Chern--Simons invariants of differentiable manifolds are analyzed from the point of view of homological algebra. Given a manifold $M$ and a Lie (or, more generally, an $L_\infty$)~algebra $\g$, the vector space $H^*(M)\otimes\g$ has the structure of an $L_\infty$~algebra whose homotopy type is a homotopy invariant of $M$. We formulate necessary and sufficient conditions for this $L_\infty$~algebra to have a quantum lift. We also obtain structural results on unimodular $L_\infty$~algebras and introduce a doubling construction which links unimodular and cyclic $L_\infty$~algebras.
\end{abstract}

\maketitle
\tableofcontents

\section{Introduction}
\addtocontents{toc}{\protect\setcounter{tocdepth}{1}}
This paper is motivated by the constructions of quantum Chern--Simons invariants, particularly the approach due to Costello \cite{Cos}, which yield potentially new and interesting topological invariants of closed oriented manifolds. In this paper we develop rigorously the aspects of homotopical algebra which are related to these constructions with a view to analysing them from a new perspective, namely that of homological algebra and obstruction theory.

Let $M$ be a closed oriented manifold with a flat bundle $\g$ of Lie super algebras on $M$ with an invariant pairing of parity opposite to that of $\dim M$. Often the dimension of $M$ is odd, or even three, \cite{Wit, kontfd}. When the cohomology $H^*(M,\g)$ is trivial then Axelrod and Singer \cite{AS, AS2} and Kontsevich \cite{kontfd} showed that one can use the perturbative expansion of the Chern--Simons theory to construct topological invariants of the manifold $M$. Costello \cite{Cos} considered the case when the cohomology is non-trivial and showed that the perturbative expansion should, according to the philosophy of the Batalin-Vilkovisky formalism \cite{Sch, Ro}, lead to a function on the cohomology, meaning a formal power series $S(h) = S_0 +S_1h +S_2h^2+\dots \in\hat{S}\Pi H^*(M,\g)[[h]]$, which is a solution to a certain differential equation called the \emph{quantum master equation}. This solution should be well defined up to homotopy of solutions in a certain precise sense. Costello constructed this solution (up to the constant term) for manifolds of any dimension and with $\g$ perhaps even being a bundle of $L_\infty$~algebras. Similar constructions are also given in \cite{Ia}, where a solution is constructed including the constant term, and in \cite{CM} where a finite dimensional model of Chern--Simons theory is analysed. However, we take a very different philosophy to these latter approaches, which tackle the problem with a motivation coming from physics and have different aims to the present paper. We instead employ the language of formal odd symplectic geometry and obstruction theory cf. \cite{Kh, KV1, KV2, HL} for an overview of this language and related constructions from this perspective.

From the point of view of obstruction theory and homological algebra even the existence of this invariant is a somewhat surprising result and we now explain why. The $S_0$ part of the solution $S(h)$ is an odd element of $\hat{S}\Pi H^*(M,\g)$ which satisfies a different equation, the \emph{classical master equation}. This turns out to be precisely the statement that $S_0$ defines a certain algebraic structure, namely a cyclic $L_\infty$~algebra, on the space $H^*(M,\g)$. From this perspective the full solution $S(h)$ can likewise be understood as defining an algebraic structure, namely a quantum $L_\infty$~algebra \cite{Mar}, on the space $H^*(M,\g)$. The cyclic $L_\infty$~algebra defined by $S_0$ can be understood from the viewpoint of algebraic homotopy theory: it is simply the \emph{minimal model} obtained by transferring the cyclic differential graded Lie algebra structure on $\Omega^*(M, \g)$ to its homology via a cyclic generalisation of the homotopy transfer theorem for differential graded Lie algebras; in particular no consideration of Chern--Simons theory is really necessary to construct $S_0$. The connection between formulas for minimal models of $L_\infty$~algebras and Batalin--Vilkovisky type integrals was explicitly spelled out in \cite{Sh}.

The full solution $S(h)$ can therefore be viewed as a `lift' of this cyclic $L_\infty$~algebra to a larger algebraic structure with 'higher genus' products. However, it is \emph{not} the case that any cyclic $L_\infty$~algebra can be considered as a quantum $L_\infty$~algebra with trivial higher genus products, in other words the function $S_0$ will not in general satisfy the quantum master equation and there can be non-vanishing obstructions to lifting. It is in this sense that the existence of $S(h)$ is somewhat surprising: why do these obstructions always vanish for differential graded Lie algebras of the form $H^*(M,\g)$? In fact, we will show that these obstructions actually do \emph{not} always vanish, even for the case when $\g$ is a trivial bundle of Lie algebras, and thus the Chern--Simons invariant \emph{cannot} be constructed without further assumptions on $M$ and $\g$, a fact which has not been addressed in the current literature.

Therefore, one of the main applications of the techniques developed in this paper is to identify necessary and sufficient conditions for these obstructions to vanish (Theorems~\ref{CS2},~\ref{CS1} and~\ref{tenpr}). In particular we will show that if $M$ is simply-connected and has zero Euler characteristic then a quantum lift of $S_0$ always exists. This holds also in the more general case when $\g$ is itself an $L_\infty$~algebra. This can be viewed as an alternative approach to Costello's construction, although we do not address the question whether a \emph{canonical} lift exists. Conversely, if a quantum lift exists, then either $M$ has zero Euler characteristic or $\g$ must satisfy a certain algebraic condition, called \emph{unimodularity}.

While this result is one of the main motivations for developing the theory of cyclic and quantum $L_\infty$~algebras from a homotopical perspective, the theory also yields a number of results of independent interest, particularly concerning the homotopy theory of cyclic algebras and we now highlight some of the more notable aspects of the theory in this introduction.

\subsection*{Cyclic minimal models}
Before considering the full Chern--Simons invariant it is important to first understand the classical part $S_0$ from the perspective of homotopical algebra. The classical homotopy transfer theorem for a differential graded Lie algebra implies that the homology carries the structure of an $L_\infty$~algebra, well defined up to homotopy of $L_\infty$~algebras. However, note that this is \emph{not} sufficient to construct the $S_0$ term: one must give the structure of a \emph{cyclic} $L_\infty$~algebra (which is an $L_\infty$~algebra with an invariant pairing), well defined up to homotopy of \emph{cyclic} $L_\infty$~algebras. The presence of the pairing means that the usual methods of proving the homotopy transfer theorem do not work well here and there does not yet exist a sufficient theory of minimal models for cyclic algebras for our purposes\footnote{This difficulty was also noted by Costello \cite{Cos2} for minimal models of cyclic $A_\infty$~algebras. However, in the context considered in \cite{Cos2} a workaround using the main result of \cite{HL} was still possible.} (in the case of Chern--Simons theory the well-defined nature of $S_0$ can be seem from the geometric nature of the construction, but this is somewhat ad-hoc). The existence of the cyclic minimal model has been shown in \cite{CL} but a stronger result concerning uniqueness up to cyclic homotopy is required.

Therefore, Appendix~\ref{app:operads} proves a version of the minimal model theorem (Theorem~\ref{thm:minmod}) for algebras over cyclic operads. The cyclic case is a good deal more complicated than the non-cyclic one and we develop a completely new approach to minimal models, based on the closed model category structure on cyclic operads. This approach also works, under some restrictions, in the case when the underlying space of an operadic algebra is infinite-dimensional (which is needed in our applications). One particularly noteworthy feature of our treatment is a construction of a cyclic endomorphism operad for a differential graded vector space having \emph{finite-dimensional homology} (Definition~\ref{def:cyclicendomorphism}). In the infinite-dimensional case it is strictly smaller than the usual endomorphism operad; however under certain favorable conditions, usually satisfied in practice, it has the same homotopy type. These results are of independent interest; the main text does not use the language of operads and this appendix can be skipped by the casual reader.

On a related theme, Appendix~\ref{app:tensor} explains how to construct tensor products of homotopy algebras (e.g.~of two $A_\infty$~algebras) in a homotopy invariant way, both in the cyclic and non-cyclic situations. This should also be of independent interest.

\subsection*{Homotopy invariance}
A natural question to ask about any smooth invariant of a manifold is, of course, how coarse an invariant is it? For example, is it a homotopy invariant? Surprisingly, even for the classical $S_0$ term this is non-trivial. While it is clear from the classical homotopy transfer theorem that the non-cyclic $L_\infty$~algebra structure defined by $S_0$ is a homotopy invariant of the manifold (for fixed trivial bundle $\g$) it is less clear that the \emph{cyclic} $L_\infty$~algebra structure is a homotopy invariant of a manifold. Moreover, even after developing the corresponding homotopy transfer theorem for cyclic algebras (Appendix~\ref{app:operads}) it is still not clear due to the extra complications in the cyclic case.

Nevertheless, we are able to settle this question for the classical invariant with the help of the main theorem of \cite{HL} and show that it is indeed a homotopy invariant (Theorem~\ref{cyclicmin}).

However, the nature of whether the full Chern--Simons invariant is a homotopy invariant, or whether it is something stronger, is not yet understood.

\subsection*{The Lambrechts--Stanley theorem}
The Lambrechts--Stanley theorem \cite{LS} proves the existence of a finite dimensional differential graded commutative Frobenius algebra model of the de~Rham algebra of a simply-connected closed orientable manifold and this powerful theorem plays an important role in this paper. It is not claimed, however, that the Lambrechts--Stanley model is unique: it is, obviously, unique up to weak equivalence of commutative differential graded algebras, but not a priori up to equivalence of \emph{cyclic} algebras (and it is not completely clear what this latter notion should mean precisely).

We therefore strengthen the Lambrechts--Stanley theorem in this direction showing that their model is necessarily unique up to strong cyclic homotopy equivalence (Proposition~\ref{LS2}). In particular this means that the cyclic $C_\infty$ minimal model on the homology of the Lambrechts--Stanley Frobenius algebra is homotopy equivalent to the $C_\infty$ minimal model of the de Rham algebra as \emph{cyclic} algebras, not just as $C_\infty$~algebras.

\subsection*{Unimodular $L_\infty$~algebras}
In general it is difficult to decide whether a cyclic $L_\infty$~algebra can be lifted to a quantum $L_\infty$~algebra; the first obstruction is provided by a cohomology class in the \emph{Chevalley--Eilenberg} cohomology of the algebra but even if this vanishes, in general there will be many more further obstructions. We note that the corresponding obstruction theory for lifting cyclic $A_\infty$~algebras to quantum $A_\infty$~algebras was developed in \cite{Ham}; we are considering precisely the $L_\infty$~analogue of this.

We will see that for the special case of $L_\infty$~algebras arising as tensor products of strict differential graded commutative algebras with $L_\infty$~algebras (such as the space $\Omega^*(M,\g)\cong \Omega^*(M)\otimes \g $ where $\g$ is a trivial bundle) it is normally sufficient to consider only lifts to first order. More precisely, we show that this means we can consider \emph{unimodular $L_\infty$~algebras}, introduced in \cite{Gran}, instead of quantum $L_\infty$~algebras and the problem of lifting an $L_\infty$~algebra to a unimodular $L_\infty$~algebra which is a more amenable problem. This is an important fact since unimodularity does not depend on the invariant pairing on a cyclic $L_\infty$~algebra which simplifies the problem further. The notion of unimodularity is already nontrivial for ordinary Lie algebras and it reduces in that case to the the familiar condition of tracelessness of the adjoint representation. 

We therefore analyse this latter lifting problem in some detail (Section~\ref{spaceoflifts}) and give new structural results on unimodular $L_\infty$~algebras, of independent interest. Our main result in this regard is showing that almost every unimodular $L_\infty$~algebra can be reduced to a \emph{strict} unimodular $L_\infty$~algebra (Theorem~\ref{strict}), a result which does not generalise to quantum $L_\infty$~algebras.

We analyse the obstruction theory using the language of \emph{Maurer--Cartan elements} in differential graded Lie algebras. Appendix~\ref{app:mc} recalls basic facts and terminology concerning Maurer--Cartan elements in differential graded Lie algebras and develops the theory of lifts of Maurer--Cartan elements (Theorem~\ref{thm:fibration}).

\subsection*{The doubling construction}
Since the $L_\infty$ minimal model on $H^*(M,\g)$ is not just any $L_\infty$~algebra, but rather a \emph{cyclic} $L_\infty$~algebra, we will want to consider the problem of lifting a \emph{cyclic} $L_\infty$~algebra $V$ to a unimodular $L_\infty$~algebra. It is a straightforward fact that if the invariant pairing on $V$ is of even degree then $V$ always lifts to a unimodular algebra. If the pairing on $V$ is odd then this is less clear.

We therefore introduce the \emph{odd double}, associating to an $L_\infty$~algebra $V$ an odd cyclic $L_\infty$~algebra on a space twice the dimension of $V$ and show that it admits a unimodular (or quantum) lift if and only if the original $L_\infty$~algebra admits a unimodular lift (Theorem~\ref{quantunim}). There is an even analogue of this doubling construction, which we also describe, for completeness. This even analogue was described using the language of operads in \cite{ES} where it was called \emph{cyclic completion}. In contrast, we approach this construction from the viewpoint of formal odd symplectic geometry.

In particular, since there are many examples of non-unimodular Lie algebras this construction gives a way of producing plentiful examples of odd cyclic $L_\infty$~algebras which are not unimodular and thus many examples where the higher dimensional Chern--Simons invariant cannot be constructed.

\addtocontents{toc}{\protect\setcounter{tocdepth}{2}}
\subsection{Notation and conventions}
In this paper we work in the category of $\mathbb Z/2$--graded vector spaces (also known as super vector spaces) over a field $\ground$ of characteristic zero. However, all our results (with obvious modifications) continue to hold in the $\mathbb Z$--graded context. The adjective `differential graded' will mean `differential $\mathbb Z/2$--graded' and will be abbreviated as `dg'. A (commutative) differential graded (Lie) algebra will be abbreviated as (c)dg(l)a. All of our unmarked tensors are understood to be taken over $\ground$. For a super vector space $V=V^0\oplus V^1$ the symbol $\Pi V$ will denote the \emph{parity reversion} of $V$; thus $(\Pi V)^0=V^1$ while $(\Pi V)^1=V^0$.

We will often invoke the notion of a \emph{complete}\footnote{In earlier works of the authors, such as \cite{braun, HL}, the adjective `formal' was used instead of `complete'; we have chosen to use the latter to avoid clashes with established terminology.} (dg) vector space; this is just an inverse limit of finite-dimensional discrete (dg) vector spaces. An example of a complete space is $V^*$, the $\ground$--linear dual to a discrete vector space~$V$. A complete vector space comes equipped with a topology (the inverse limit topology) and whenever we deal with a complete vector space all linear maps from or into it will be assumed to be continuous; thus we will always have $V^{**}\cong V$. If $V$ is a discrete space and $W=\lim_\leftarrow{W_i}$ is a complete space we will write $V\otimes W$ for $\lim_{\leftarrow}V\otimes W_i$; thus for two discrete spaces $V$ and $U$ we have $\Hom(V,U)\cong U\otimes V^*$.

The category of complete dg vector spaces is naturally a symmetric monoidal category with the \emph{completed} tensor product defined for two complete spaces $V=\lim_{\leftarrow} V_i$ and $W=\lim_{\leftarrow} W_j$ as $V\otimes W := \lim_{\leftarrow}V_i\otimes W_j$. Symmetric monoids in this category \emph{which are in addition local} as commutative algebras will be called \emph{complete} cdgas. The free complete commutative algebra on a complete graded space $V$ will be denoted by $\hat{S}V$. The complete algebra $\hat{S}V$ has a grading by the number of symmetric factors; we will write $\hat{S}_{\geq n}V$ for the ideal of $\hat{S}V$ consisting of (possibly infinite) linear combination of monomials in $V$ of length at least $n$. Derivations of the graded algebra $\hat{S}V$ will be denoted by $\Der(\hat{S}V)$; we will write $\Der_{\geq n}(\hat{S}V)$ and $\Der_{n}(\hat{S}V)$ for those derivations which raise the grading by at least $n$ or exactly by $n$, respectively. $\Aut(\hat{S}V)$ stands for the groups of continuous automorphisms of $\hat{S}V$.

\section{Preliminaries on cyclic $L_\infty$~algebras}\label{sec:linfty}
In this section we recall the relevant basic definitions of $L_\infty$~algebras and their cyclic versions. We refer to \cite{HL',CL'} for details.

Let $(V,d)$ be a dg vector space. The differential $d$ on $V$ (which will frequently be omitted from notation) induces the differential $L_d$ on the graded Lie algebra $\Der_{\geq 1}(\hat{S}\Pi V^*)$ acting, as the notation suggests, by the Lie derivative with the linear vector field $d$ and making it a dgla; we will frequently abuse the notation by writing $d$ for $L_d$.
\begin{defi}\
\begin{enumerate}
\item
An $L_\infty$~structure on $V$ is an odd element $m\nobreak\in\nobreak\Der_{\geq 2}(\hat{S}\Pi V^*)$ which satisfies the MC equation $d(m)+\frac{1}{2}[m,m]=\nobreak 0$. The pair $(V,m)$ will be referred to as an $L_\infty$~algebra and the algebra $\hat{S}\Pi V^*$, supplied with the differential $d+m$, as its representing complete cdga.
\item
For an $L_\infty$~algebra $(V,m)$ its representing complete cdga $\hat{S}\Pi V^*$ will often be denoted as $\CE(V)$ and called the \emph{Chevalley--Eilenberg} complex of $V$; note that this is consistent with the usual terminology in the case when $m$ is a quadratic derivation and thus, $V$ is an ordinary graded Lie algebra. We will also write $\CE_{\geq n}(V)$ for the cdga $\hat{S}_{\geq n}\Pi V^*$. The corresponding cohomology will be denoted by $\HCE(V)$ and $\HCE_{\geq n}(V)$ respectively.
\end{enumerate}
\end{defi}

Now suppose that $V$ has a non-degenerate even or odd symmetric bilinear form $\langle,\rangle$; this is equivalent to saying that  $\Pi V$ has a linear even or odd symplectic structure which will be denoted by $\omega$. Of course, we also assume that it is compatible with the differential, which means that for $u,v\in V$ it holds that $\langle d(u),v\rangle+(-1)^{|u|}\langle u,d(v)\rangle=0$.  Then there is a notion of a \emph{cyclic} $L_\infty$~algebra supported on $V$.

\begin{defi}An $L_\infty$~structure $m$ on $V$ is called (even or odd) cyclic if it preserves the given (even or odd) symplectic structure $\omega$ on $\Pi V$, i.e. if $L_m(\omega)=0$ where $L_m$ stands for the Lie derivative along $m$.
\end{defi}

\begin{rem}\
\begin{itemize}
\item
Following tradition, we defined cyclic $L_\infty$~algebras as $L_\infty$~algebras supplied with a \emph{non-degenerate} form. In particular, they must be finite-dimensional. It will be convenient to relax this notion a little by dropping the requirement of non-degeneracy. Suppose that $\langle,\rangle$ is a symmetric bilinear form on a dg vector space $V$. This form can be viewed as a differential two-form $\omega$ on the formal manifold $\Pi V$. Suppose that a vector field $m\in~\Der_{\geq 2}(\hat{S}\Pi V^*)$ preserves $\omega$. If $m$ determines an $L_\infty$~structure on $V$, then we will slightly abuse the terminology by referring to $V$ as a cyclic $L_\infty$~algebra, even though $\langle,\rangle$ might be degenerate. We denote by $\Der_{\geq 2}^\cyc(\hat{S}\Pi V^*)$ the subspace of derivations in $\Der_{\geq 2}(\hat{S}\Pi V^*)$ which preserve $\omega$. This is clearly a dg Lie subalgebra and a more succinct definition of a cyclic $L_\infty$~algebra in this sense is as an MC element in the dgla $\Der_{\geq 2}^\cyc(\hat{S}\Pi V^*)$.
\item
In the definition of an $L_\infty$~algebra, we can let $V$ have vanishing differential and consider instead the dgla $\Der_{\geq 1}(\hat{S}\Pi V^*)$. This results in an equivalent definition; however it is sometimes less convenient since the latter dgla is \emph{not} pronilpotent (it has a reductive part consisting of linear endomorphisms of $\Pi V^*$).
\end{itemize}
\end{rem}

There is a concomitant notion of an $L_\infty$~map.
\begin{defi}\
\begin{enumerate}
\item
Let $((V,d_V),m_V)$ and $((W,d_W),m_W)$ be two $L_\infty$~structures on $V$ and $W$. An $L_\infty$~map $f\co(V,m_V)\to(W,m_W)$
is an algebra map between their representing complete cdgas $f\co\hat{S}\Pi W^*\to \hat{S}\Pi V^*$
such that $f\circ(m_W+d_W)=(m_V+d_V)\circ f$.
\item
If $V$ and $W$ are supplied with bilinear forms (even or odd) and the $L_\infty$~structures $(V,m_V)$ and $(W,m_W)$ are cyclic with respect to these, then an $L_\infty$~map $f\co(V,m_V)\to(W,m_W)$ is \emph{cyclic} if the corresponding map on representing complete cdgas preserves the two-forms (symplectic structures in the non-degenerate case).
\end{enumerate}
\end{defi}
A more traditional approach to defining $L_\infty$~algebras and maps is through multi-linear maps. Note that a derivation $m\in\Der_{\geq 2}\hat{S}\Pi V^*$ has the form $m=m_2+m_3+\dots$ where $m_n$ is the $n$--th homogeneous component of $m$. In other words, any derivation is determined by the collection of maps $m_n\co\Pi V^*\to\left( (\Pi V^*)^{\otimes n}\right )_{S_n}$. We have an identification between $S_n$ coinvariants and $S_n$ invariants:
\[
i_n\co\left(\Pi V^*)^{\otimes n}\right)_{S_n}\to
\left((\Pi V^*)^{\otimes n}\right)^{S_n}\cong
\left((\Pi V^{\otimes n})_{S_n}\right)^*
\]
where $i_n(x_1\otimes \dots \otimes x_n)=\sum_{\sigma\in S_n}\sigma[x_1\otimes\dots\otimes x_n]$. Then the dual to the composite map
\[
i_n\circ m_n\co\Pi V^*\to \left((\Pi V^{\otimes n})_{S_n}\right)^*
\]
is a map $\tilde{m}_n\co(\Pi V^{\otimes n})_{S_n}\to\Pi V$. Thus, an $L_\infty$~structure on $V$ is equivalent to a collection of symmetric multilinear maps $\tilde{m}_n\co (\Pi V)^{\otimes n}\to \Pi V$ of odd degree as above subject to appropriate conditions stemming from the MC equation $d(m)+m\circ m=0$. For uniformity we can set $m_1\co\Pi V^*\to\Pi V^*$ to be the internal differential and then the MC constraint could be written as $M\circ M=0$ where $M:={m}_1+{m}_2+\dots\in  \Der_{\geq 1}(\hat{S}\Pi V^*)$.

A similar argument shows that an $L_\infty$~map $f\co\hat{S}\Pi W^*\to \hat{S}\Pi V^*$ is equivalent to a collection of symmetric multi-linear maps $\tilde{f}_n\co(\Pi V)^{\otimes n}\to \Pi W, n=1,2,\dots$ of even degree satisfying suitable identities. An $L_\infty$~map is \emph{strict} if $\tilde{f}_n=0$ for $n=2,3,\dots$.

From the point of view of multi-linear maps, an $L_\infty$~algebra structure $m$ on a vector space $V$ together with a non-degenerate pairing $\langle,\rangle$ is cyclic if and only if the tensors $\langle \tilde{m}_n(v_1,\dots, v_n), v_{n+1}\rangle$ are $S_{n+1}$--symmetric where $v_1,\dots, v_{n+1}\in V$.

We can now define the notion of an $L_\infty$~(quasi\nobreakdash-)isomorphism.

\begin{defi}
A (cyclic) $L_\infty$~map $f\co(V,m_V)\to (W,m_W)$ is a (cyclic) $L_\infty$~(quasi\nobreakdash-)isomorphism if its linear component $\tilde{f}_1\co \Pi V \to \Pi W$ is a (quasi\nobreakdash-)isomorphism. If the spaces $V$ and $W$ coincide and the component $\tilde{f}_1$ of an $L_\infty$~map $f$ is the identity map, then $f$ is called a \emph{pointed} $L_\infty$~map. A pointed $L_\infty$~map is necessarily an isomorphism.
\end{defi}

\begin{rem}\label{rem:linftygaugeiso}\
\begin{itemize}
\item If one understands an $L_\infty$~structure as an MC element in the pronilpotent dgla $\Der_{\geq 2}(\hat{S}\Pi V^*)$ then the notion of a pointed $L_\infty$~isomorphism becomes identical with the notion of gauge equivalence of MC elements (see Appendix~\ref{app:mc}). When we say two $L_\infty$~algebras are equivalent we will mean that they are gauge equivalent $L_\infty$~structures (and so are supported on the same space, i.e.~pointed $L_\infty$~isomorphic).
\item The same comments apply in the cyclic situation: the notion of a pointed cyclic $L_\infty$~isomorphism is equivalent to the notion of gauge equivalence of MC elements in the pronilpotent dgla $\Der_{\geq 2}^{\cyc}(\hat{S}\Pi V^*)$.
\item Sometimes however, it is natural to consider the action of the reductive part of $\Aut(\hat{S}\Pi V^*)$ on the set of $L_\infty$~structures, even though it does not quite fit with the general theory of MC elements and their gauge transformations.
\end{itemize}
\end{rem}

It is generally more convenient to think about $L_\infty$~algebras geometrically, as formal vector fields of square zero. There is, however, one exception: a tensor product of an $L_\infty$~algebra and a cdga is easier to understand in terms of multi-linear maps.
\begin{defi}
Let $A$ be a cdga and $V$ be an $L_\infty$~algebra specified by a collection of multi-linear maps $\tilde{m}_n^V\co(\Pi V)^{\otimes n}\to V$. Denote by $X^n\co A^{\otimes n}\to A$ the $n$--fold product map. Then $A\otimes V$ has the structure of an $L_\infty$~algebra determined by the multi-linear maps
\[
\tilde{m}^{A\otimes V}_n=X^n\otimes \tilde{m}_n\co(A\otimes V)^{\otimes n}\to A\otimes V.
\]
\end{defi}
Suppose that $A$ has an invariant scalar product $[,]$ so that $[ab,c]=[a,bc]$ for all $a,b,c\in A$ and that $V$ has a scalar product $\langle,\rangle$ making it a cyclic $L_\infty$~algebra; we do not assume finite dimensionality of $A$ or $V$ here. Then the $L_\infty$~algebra $A\otimes V$ can be endowed with a scalar product $(,)$ so that
\[
(a\otimes v,b\otimes u):=(-1)^{|v||b|}[a,b]\langle v, u\rangle
\]
where $a,b\in A$ and $v,u\in V$. The scalar product $(,)$ makes $A\otimes V$ into a cyclic $L_\infty$~algebra.

\section{Lie algebras of vector field and the doubling construction}
In this section we construct a map from the Lie algebra of formal vector fields on a vector space $V$ into the Lie algebra of Hamiltonian vector fields on the space $V^*\oplus V$ as well as an odd analogue of this construction. For future use we need to consider the case when $V$ carries a differential; however all the construction of this section are meaningful and non-trivial in the case when this differential vanishes; in this case various dglas of formal vector fields will also become simply graded Lie algebras.

So let $V$ be a finite-dimensional dg vector space and consider the dgla $\Der(\hat{S} V^*)$ of derivations of the formal functions on $V$, also known as the Lie algebra of formal vector fields on $V$. Any derivation of $\hat{S}V^*$ is uniquely determined by its value on $V^*$ and thus, we have an isomorphism of dg vector spaces
\[
\Der(\hat{S} V^*)\cong \hat{S}V^*\otimes V.
\]
Consider also the dg vector space $V^*\oplus V$; it clearly has a nondegenerate anti-symmetric bilinear form $\langle, \rangle$ which is defined by requiring that $V$ and $V^*$ are both isotropic subspaces, while $\langle v^*,u\rangle=v^*(u)$ where $v^*\in V^*, u\in V$. We will call $V^*\oplus V$ the \emph{even double} of $V$. Similarly, $V^*\oplus \Pi V$ has a nondegenerate symmetric \emph{odd} bilinear form $(,)$ which is defined by requiring that $V$ and $V^*$ are both isotropic subspaces, while $(v^*,\Pi u)=(-1)^{|v^*|}v^*(u)$ where $u\in V$ so that $\Pi u\in \Pi V$ and $v^*\in V^*$. We will call $V^*\oplus \Pi V$ the \emph{odd double} of $V$.

So, the spaces $V\oplus V^*$ and $V^*\oplus \Pi V$ can be regarded as linear symplectic spaces (even and odd respectively) and thus, their formal algebras of functions have associated Poisson brackets. Thus, $\hat{S}(V^*\oplus V)$ becomes a dgla while $\hat{S}(V^*\oplus \Pi V)$ becomes an \emph{odd} dgla; we will refer to them as dglas of formal Hamiltonians. Note that the odd Poisson bracket on $\hat{S}(V^*\oplus \Pi V)$ is sometimes called the \emph{antibracket}. These Lie algebras have a one-dimensional center formed by constant functions and we will denote by $\hat{S}_+(V^*\oplus V)$ and $\hat{S}_+(V^*\oplus \Pi V)$ the corresponding quotient Lie algebras; note that the latter can be identified with the Lie algebras of \emph{symplectic} vector fields on $V^*\oplus V$ and $V^*\oplus \Pi V$ respectively. The associated Lie brackets on $\hat{S}_+(V^*\oplus V)$ and $\hat{S}_+(V^*\oplus \Pi V)$ are the commutators of the corresponding Hamiltonians and will be denoted by $(,)$ and $\{,\}$ respectively. We refer the reader to \cite{HL} for a short survey of the relevant elementary theory.

The bracket $(,)$ makes $\hat{S}_+(V^*\oplus V)$ into a Lie algebra, additionally it satisfies the Leibniz identity: for $f, g, h\in \hat{S}_+(V^*\oplus V)$ we have
\[
(f,gh)=(f,g)h+(-1)^{|f||g|}g(f,h).
\]
The bracket $\{,\}$ makes $\hat{S}_+(V^*\oplus \Pi V)$ into an \emph{odd} Lie algebra; that means that it is symmetric (as opposed to antisymmetric):
\[
\{f,g\}=(-1)^{|f||g|}\{g,f\}
\]
and satisfies the odd Jacobi identity
\[
\{f,\{g,h\}\}=(-1)^{|f|+1}\{\{f,g\},h\}+(-1)^{(|f|+1)(|g|+1)}\{g,\{f,h\}\}.
\]
In addition, the odd Leibniz identity holds:
\[
\{f,gh\}=\{f,g\}h+(-1)^{(|f|+1)|g|}g\{f,h\}
\]
Furthermore, given a basis $x_1,\dots,x_n$ in $V$, the Poisson brackets for linear functions are given as:
\begin{eqnarray}\label{initial}
(x^*_i,x_j)=\{x^*_i,\Pi x_j\}=\delta_{ij}.
\end{eqnarray}

We will now introduce the doubling construction, associating to any formal vector field on $V$ a Hamiltonian on each of the even and odd doubles of $V$. Note that the obvious embedding $V\hookrightarrow \hat{S}V\cong \prod_{k=0}^\infty S^kV$ induces a map
\[
\Dev\co\Der(\hat{S} V^*)\cong \hat{S}V^*\otimes V\hookrightarrow \hat{S} V^*\otimes \hat{S}_+ V\subset\hat{S}_+(V^*\oplus V).
\]
We will also define an \emph{odd} map
\[
\Dod\co\Der(\hat{S} V^*)\cong \hat{S}V^*\otimes V\hookrightarrow \hat{S} V^*\otimes \hat{S}_+\Pi V\subset\hat{S}_+(V^*\oplus \Pi V)
\]
by the formula $\hat{S}V^*\otimes V\ni f\otimes v\mapsto (-1)^{|f|}f\otimes\Pi v\in \hat{S} V^*\otimes \hat{S}_+\Pi V\subset \hat{S}_+(V^*\oplus \Pi V)$.

\begin{defi}The maps $\Dev$ and $\Dod$ are called even and odd doubling maps respectively.
\end{defi}

\begin{theorem}\
\begin{enumerate}
\item The map $\Dev$ is a dgla map, realizing $\Der(\hat{S}V^*)$ as a sub-dgla in the dgla of formal Hamiltonians
on $V^*\oplus V$.
\item
The map $\Dod$ is an \emph{odd} dgla map, i.e.~it satisfies the equality \[\Dod[\xi,\eta]=(-1)^{|\xi|}[\Dod(\xi),\Dod(\eta)].\] This map realizes $\Der(\hat{S}V^*)$ as a sub-dgla in the odd dgla of formal Hamiltonians
on $V^*\oplus \Pi V$.
\end{enumerate}
\end{theorem}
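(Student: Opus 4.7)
The plan is to verify both statements by direct computation on elementary tensors, exploiting the isomorphism $\Der(\hat{S}V^*)\cong\hat{S}V^*\otimes V$ whose decomposable elements $f\otimes v$ with $f\in\hat{S}V^*$, $v\in V$ topologically span the whole space. Since both the commutator of derivations and the Poisson brackets are continuous and bilinear, it suffices to check the bracket identities on pairs $\xi=f\otimes v$, $\eta=g\otimes w$ of such elementary tensors, and similarly for compatibility with the internal differential. I would split the argument into three parts: (i) the even case, (ii) the odd case with its characteristic sign twist, and (iii) compatibility with the differentials.

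For part (i), expand the commutator of derivations acting on a dual vector $u^*\in V^*$ in terms of formal partial derivatives of $f,g$ along $V^*$ (using $\xi(u^*)=f\cdot\langle u^*,v\rangle$). On the Hamiltonian side, $\Dev(\xi)=fv$ and $\Dev(\eta)=gw$ lie in $\hat{S}V^*\otimes\hat{S}_+V$, so expand $(fv,gw)$ by the Leibniz rule on both slots. Because $V^*$ and $V$ are isotropic subspaces for the even symplectic form on $V^*\oplus V$, the terms $(f,g)$ and $(v,w)$ vanish, and only the mixed brackets $(f,w)$, $(v,g)$ survive; applying Leibniz to these and using \eqref{initial} reduces them to the same formal partial derivatives that appeared in $[\xi,\eta]$. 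Matching the two expressions gives $\Dev([\xi,\eta])=(\Dev(\xi),\Dev(\eta))$.

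Part (ii) is structurally the same computation carried out for the odd Poisson bracket $\{,\}$ on $\hat{S}(V^*\oplus\Pi V)$, with $\Dod(\xi)=(-1)^{|f|}f\otimes\Pi v$. Again $V^*$ and $\Pi V$ are isotropic for the odd symplectic form, so only mixed brackets $\{f,\Pi w\}$ and $\{\Pi v,g\}$ contribute, and these are governed by $\{x_i^*,\Pi x_j\}=\delta_{ij}$. The point is that the $(-1)^{|f|}$ prefactor in the definition of $\Dod$ is chosen precisely to absorb the Koszul signs produced when the parity shift $\Pi$ is moved past $f$ and $g$ during the Leibniz expansion, together with the odd symmetry $\{a,b\}=(-1)^{|a||b|}\{b,a\}$ of the antibracket. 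Tracking all signs gives exactly the odd homomorphism identity $\Dod[\xi,\eta]=(-1)^{|\xi|}[\Dod(\xi),\Dod(\eta)]$.

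Part (iii) is a bootstrap. The internal differential $d$ of $V$ itself sits inside $\Der(\hat{S}V^*)$ as a linear vector field, and its images $\Dev(d)$, $\Dod(d)$ are quadratic Hamiltonians on the respective doubles; the differentials on $\Der(\hat{S}V^*)$, $\hat{S}_+(V^*\oplus V)$ and $\hat{S}_+(V^*\oplus\Pi V)$ are all (graded) bracketing against $d$, $\Dev(d)$, $\Dod(d)$ respectively, so the bracket identities already established immediately imply intertwining with the differentials. Injectivity of $\Dev$ and $\Dod$ is clear from the inclusion $V\hookrightarrow\hat{S}_+V$, proving the ``sub-dgla'' claim. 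The one genuinely delicate step in this plan is the sign bookkeeping for part (ii): the interplay between the Koszul rule applied to $\Pi$-shifts, the odd symmetry and odd Leibniz rule for $\{,\}$, and the $(-1)^{|f|}$ twist in the definition of $\Dod$ is what makes the odd doubling work and is the main obstacle to getting a clean, sign-correct proof.
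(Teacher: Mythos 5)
Your parts (i) and (ii) are exactly the paper's argument: the paper also reduces to ``elementary'' derivations (it writes them as $f\partial_{x_i}$, i.e.\ $f\otimes x_i^*$, rather than general tensors $f\otimes v$, which is the same thing), expands the commutator and the Poisson/odd Poisson bracket by Leibniz, kills the $(f,g)$-- and $(v,w)$--type terms using isotropy of $V^*$ and $V$ (resp.\ $\Pi V$), and matches the surviving mixed terms via the relations \eqref{initial}; the sign bookkeeping in the odd case is indeed where all the work lies, just as in the paper. Where you genuinely diverge is part (iii). The paper disposes of differential--compatibility in one line, and the honest justification behind that line is naturality: the differential on $\Der(\hat{S}V^*)\cong\hat{S}V^*\otimes V$ and the differential on $\hat{S}(V^*\oplus V)$ (resp.\ $\hat{S}(V^*\oplus\Pi V)$) are both induced by $d$ acting on the tensor factors $\hat S V^*$ and $V$, and $\Dev$, $\Dod$ are built from inclusions of these factors, so they are chain maps essentially by inspection. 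Your bootstrap --- identify the differential on the double with bracketing against the quadratic Hamiltonian $\Dev(d)$ (resp.\ $\Dod(d)$) and then invoke the already-proved bracket identity with $\xi=L_d$ --- is a legitimate alternative, but be aware that it does not come for free: the identity $d_{\text{double}}=(\Dev(L_d),-)$ is itself a sign-sensitive claim (with mismatched dualization conventions it fails by a sign on one of the two summands, since the bracket $(\Dev(L_d),-)$ and the naively dualized differential act with opposite signs on the $V$ or $V^*$ part), and in the odd case the extra factor $(-1)^{|L_d|}=-1$ coming from the odd-map identity must be absorbed into the convention for $d_{\text{odd double}}$. So your route trades the paper's small naturality check for a different small check of comparable delicacy; both are fine, but neither step is vacuous.
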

\begin{proof} Note that the maps $\Dev$ and $\Dod$ are clearly injective and respect the differentials. Consider the even case first.
Choose a basis $x_1,\dots, x_n$ in $V^*$ and the dual basis $x_1^*,\dots, x_n^*$ in $V$. Note any derivation of $\hat{S}V^*$ is a linear combination
 of derivations of the form $f\partial_{x_i}$ where $f\in \hat{S}V^*$. Then $\Dev(f\partial_{x_i})=fx_i^*\in \hat{S}(V^*\oplus V)$. It is, therefore, sufficient to prove
 that for any $1\leq i,j\leq n$
 \[
 \Dev[f\partial_{x_i}, g\partial_{x_j}]=(fx_i^*,gx_j^*)\in \hat{S}(V^*\oplus V),
 \]
where $f,g\in \hat{S}V^*$, and $[,], (,)$ stand for the commutator of derivations and the Poisson bracket of formal functions on $V^*\oplus V$ respectively.
We have
\begin{align*}
\Dev[f\partial_{x_i}, g\partial_{x_j}]=&\Dev\left(f\partial_{x_i}(g)\partial_{x_j}+(-1)^{(|f|+|x_i|)|g|}g[f\partial_{x_i},\partial_{x_j}]\right)\\
=&f\partial_{x_i}(g)x_j^*-(-1)^{(|f|+|x_i|)|g|+|x_j|(|f|+|x_i|)}g\partial_{x_j}fx_i^*\\
=&-(-1)^{(|f|+|x_i^*|)|g|}(g,fx_i^*)x^*_j-(-1)^{(|f|+|x_i|)|g|+|x_j|(|f|+|x_i|)}g(x_j^*,fx_i^*)\\
=&(fx_i^*,g)x_j^*+(-1)^{(|f|+|x_i|)(|g|)}g(fx_i^*,x_j^*)\\
=&(fx_i^*,gx_j^*)
\end{align*}
where we used the identities $[\partial_{x_i}\partial_{x_j}]=(x_i^*,x_j^*)=(f,g)=0;(x^*_i,g)=\partial_{x_i}g$ and $(x^*_j,f)=\partial_{x_j}f$.

The proof in the odd case is similar, although getting the signs right is a little painful. We have, taking into account the identities $\{\Pi x_i^*,\Pi x_j^*\}=\{f,g\}=0;\{\Pi x^*_i,g\}=\partial_{x_i}g$ and $\{\Pi x^*_j,f\}=\partial_{x_j}f$:
\begin{align*}
[\Dod(f\partial_{x_i}),\Dod(g\partial_{x_j})]=&(-1)^{|f|+|g|}\{f\Pi x_i^*,g\Pi x_j^*\}\\
=&(-1)^{|f|+|g|}\{f\Pi x_i^*,g\}\Pi x_j^*+(-1)^{(|f|+|x_i^*|)|g|+|f|+|g|}g\{f\Pi x_i^*,\Pi x_j^*\}\\
=&(-1)^{(|f|+|x^*_i|)|g|+|f|}\{g,f\Pi x_i^*\}\Pi x_j^*\\&+(-1)^{(|f|+|x^*_i|)|g|+(|f|+|x^*_i|+1)(|x^*_j|+1)+|f|+|g|}g\{\Pi x_j^*,f\Pi x_i^*\}\\
=&(-1)^{|g||f|+|g||x_i^*|+(|f|+|x^*_i|+1)|g|}f\{\Pi x^*_i,g\}\Pi x^*_j\\
&+(-1)^{(|f|+|x^*_i|)(|g|+|x_j^*|)+|x_i^*|+|x_j^*|+|g|+1}g\{\Pi x_j^*,f\Pi x_i^*\}\\
=&(-1)^{|g|}f\partial_{x_i}(g)\Pi x_j^*-(-1)^{(|f|+|x^*_i|)(|g|+|x_j^*|)+|x_i|+|x_j^*|+|g|}g\partial_{x_j}f\Pi x_i^*\\
=&(-1)^{|g|+|x_i^*|}\left((-1)^{|x_i^*|}f\partial_{x_i}(g)\Pi x_j^*-(-1)^{(|f|+|x^*_i|)(|g|+|x_j^*|)+|x_j^*|+|g|}g\partial_{x_j}f\Pi x_i^*\right)\\
=&(-1)^{|f|+|x_i^*|}\Dod[f\partial_{x_i}, g\partial_{x_j}]
\end{align*}
as desired.
\end{proof}

\begin{rem}
Associated to a (formal) function $h\in \hat{S}(V^*\oplus V)$ is a (formal) derivation $X_h$ of $\hat{S}(V^*\oplus V)$, the \emph{Hamiltonian vector field}, associated to $h$. The action of $X_h$ on $g\in \hat{S}(V^*\oplus V)$ is given by $X_h(g)= (-1)^{|h|}(h,g)$. The constants act by zero and so this gives an (of course, well-known) Lie algebra map $\phi_{\operatorname{ev}}\co\hat{S}_+(V^*\oplus V)\to\Der\hat{S}(V^*\oplus V)$. Similarly, there is
an odd Lie algebra map $\phi_{\operatorname{od}}\co\hat{S}_+(V^*\oplus \Pi V)\to\Der\hat{S}(V^*\oplus \Pi V)$. Composing the even and odd doubling maps with $\phi_{\text{ev}}$ and $\phi_{\operatorname{od}}$ we obtain the following two maps of Lie algebras:
\begin{gather*}
\Der(\hat{S}V^*)\to \Der\hat{S}(V^*\oplus V)\\
\Der(\hat{S}V^*)\to \Der\hat{S}(V^*\oplus \Pi V)
\end{gather*}
Abusing the notation, we shall refer to the latter maps as even and odd doubling maps respectively. Thus, in this interpretation the Lie algebra of vector fields on $V$ is realized as a Lie subalgebra in both $\Der\hat{S}(V^*\oplus V)$ and $\Der\hat{S}(V^*\oplus \Pi V)$.
\end{rem}

The construction of even and odd doubling maps gives rise to even and odd doubles of \emph{$L_\infty$~algebras}. Recall from Section~\ref{sec:linfty}, that an $L_\infty$~algebra structure on $V$ is an MC element in the Lie algebra $\Der_{\geq 2}(\hat{S}\Pi V^*)$. Using the correspondence between symplectic vector fields on $\Pi V$ and formal functions on $\Pi V$ we can view an even cyclic $L_\infty$~structure on $(V,d)$ endowed with an (even) inner product as an odd Hamiltonian $h\in\hat{S}\Pi V^*$ having no constant, linear or quadratic terms and satisfying the equation $dh+\frac{1}{2}(h,h)=0$. Similarly, an odd cyclic $L_\infty$~structure on $(V,d)$ endowed with an (odd) inner product is an even Hamiltonian $h\in\hat{S}\Pi V^*$ having no constant, linear or quadratic terms and satisfying the equation $dh+\frac{1}{2}\{h,h\}=0$.

Since MC elements are functorial with respect to Lie algebra maps, we can make the following definition.

\begin{defi}
Let $m\in \Der_{\geq 2}(\hat{S}\Pi V^*)$ be an $L_\infty$~algebra structure on the dg vector space $V$. The even double of $(V,m)$ is the $L_\infty$~structure $\Dev(m)\in \Der_{\ge 2}(\hat{S}\Pi(V^*\oplus V))$. The odd double of $(V,m)$ is the $L_\infty$~structure
$\Dod(m)\in \Der_{\geq 2}(\hat{S}\Pi(V^*\oplus \Pi V))$.
\end{defi}

\begin{rem}
The even and odd doubles of an $L_\infty$~algebra have naturally the structures of cyclic $L_\infty$~algebras (even and odd respectively).
\end{rem}

\begin{example}
Let us consider the case when $V$ is an \emph{ordinary} Lie algebra, possibly ungraded. The dual vector space $V^*$ has the structure of a $V$--module (the coadjoint module) and we can form
the \emph{square-zero extension} Lie algebra $V\oplus V^*$. The Lie bracket on $V\oplus V^*$ is specified by the formula:
\[
[v+ v^*,u+u^*]=[v,u]+v\cdot u^*-(-1)^{|v^*||u|}u\cdot v^*
\]
Here $v,u\in V, v^*,u^*\in V^*$, $[v,u]$ is the Lie bracket on $V$ and the expressions $v\cdot u^*$ and $u\cdot v^*$ denote the coadjoint actions of $v$ and $u$ on $u^*$ and $v^*$ respectively. The canonical inner product on $V\oplus V^*$ is clearly compatible with the Lie bracket. It is easy to see that the obtained Lie algebra structure on $V\oplus V^*$ is precisely $\Dev(V)$.

Similarly, $\Pi V^*$ is a $V$--module and we can similarly form the square-zero extension Lie algebra $V\oplus \Pi V^*$, supplied with an invariant \emph{odd} inner product. Again, it follows straightforwardly from definitions that the obtained Lie algebra is nothing but $\Dod(V)$. Thus, the notions of even and odd doubles are infinity versions of even and odd square-zero extensions of Lie algebras.
\end{example}

Let us now recall the notion of the \emph{divergence} of a formal vector field and of the \emph{Laplacian} of a (formal) function.

\begin{defi}\
\begin{enumerate}
\item
For a dg vector space $W$ the divergence of a vector field $\xi=\sum_{i}f_i\partial_{x_i}\in \Der\hat{S}W^*$ is defined as
\[
\nabla \xi=(-1)^{|f_i||x_i|}\sum_i\partial_{x_i}f_i\in \hat{S}W^*.
\]
\item
For $W=V^*\oplus\Pi V$ define the(odd) Laplacian $\Delta\co\hat{S}(W)\to \hat{S}(W)$ is the operator which acts on $g\in\hat{S}(W)$ according to the formula
\[
\Delta(g)=\frac{1}{2}\nabla(X_g).
\]
\end{enumerate}
\end{defi}

\begin{rem}
The definition of the divergence of a vector field and that of the Laplacian do not depend on the choice of a basis in $V$.
\end{rem}
Choose a basis $x_1,\dots, x_n$ in $V^*$ and the odd dual basis $\Pi x_1^*,\dots, \Pi x_n^*\in\Pi V$. Let $g\in \hat{S}(V^*\oplus \Pi V)$. Then direct inspection gives the following familiar formula:
\[
\Delta(g)=\sum_{i=1}^n\partial_{x_i}\partial_{\Pi x_i^*}g
\]
The following useful formula for the divergence of the commutator of two vector fields $\xi$ and $\eta$ could also be verified by direct inspection:
\begin{equation}\label{divergcom}
\nabla[\xi,\eta]=\xi\nabla\eta-(-1)^{|\xi||\eta|}\eta\nabla\xi.
\end{equation}

\begin{prop}\label{divlap}
Let $\xi\in\Der(\hat{S}V^*)$ be a formal vector field of $V$. Then $\Delta\left(\Dod(\xi)\right)$ belongs to $\hat{S}V^*\subset\hat{S}(V^*\oplus\Pi V)$ and the following identity in $\hat{S}V^*$ holds:
\[
\nabla(\xi)=\Delta\left(\Dod(\xi)\right)
\]
\end{prop}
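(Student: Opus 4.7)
The plan is a direct coordinate computation using the explicit basis formulas recalled just before the proposition: $\Delta = \sum_{i=1}^n \partial_{x_i}\partial_{\Pi x_i^*}$ and $\nabla\xi = \sum_i(-1)^{|f_i||x_i|}\partial_{x_i}f_i$ when $\xi = \sum_i f_i\partial_{x_i}$. So I would choose the basis $x_1,\dots,x_n$ of $V^*$ with odd dual basis $\Pi x_1^*,\dots,\Pi x_n^*$ of $\Pi V$, and write $\xi=\sum_i f_i\partial_{x_i}$ under the identification $\Der(\hat{S}V^*)\cong\hat{S}V^*\otimes V$.

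Next I would write down $\Dod(\xi)$ explicitly: by the defining formula $f\otimes v\mapsto(-1)^{|f|}f\otimes\Pi v$, we have
\[
\Dod(\xi) = \sum_{i=1}^n (-1)^{|f_i|}\, f_i\,\Pi x_i^* \;\in\; \hat{S}V^*\otimes\hat{S}_+\Pi V.
\]
Since this expression is linear in the odd variables $\Pi x_i^*$, each $\partial_{\Pi x_j^*}\Dod(\xi)$ already lies in $\hat{S}V^*$, and applying $\partial_{x_j}$ keeps it there; this proves the first assertion that $\Delta(\Dod(\xi))\in\hat{S}V^*$.

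For the identity, I would just grind through the derivation rules. The operator $\partial_{\Pi x_j^*}$ is an odd derivation of parity $|x_j|+1$; since $f_i\in\hat{S}V^*$ is killed by it, the Leibniz rule gives
\[
\partial_{\Pi x_j^*}(f_i\,\Pi x_i^*) = (-1)^{|f_i|(|x_j|+1)}f_i\,\delta_{ij}.
\]
Substituting into $\Dod(\xi)$ yields $\partial_{\Pi x_j^*}\Dod(\xi) = (-1)^{|f_j|+|f_j|(|x_j|+1)}f_j = (-1)^{|f_j||x_j|}f_j$, and then $\partial_{x_j}$ simply passes through the sign. Summing over $j$ gives
\[
\Delta(\Dod(\xi)) = \sum_{j=1}^n (-1)^{|f_j||x_j|}\,\partial_{x_j}f_j = \nabla\xi,
\]
as required.

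The only real obstacle is bookkeeping of signs, which is mild because $\Dod(\xi)$ is linear in $\Pi V$, so exactly one nontrivial Koszul sign (the $(-1)^{|f_i|(|x_j|+1)}$ from passing the odd derivation past $f_i$) combines with the defining $(-1)^{|f_i|}$ in $\Dod$ and cancels to leave precisely the $(-1)^{|f_j||x_j|}$ appearing in the divergence formula. No further invariance argument is needed, since the identity is basis-free (both $\nabla$ and $\Delta$ are, by the preceding remark).
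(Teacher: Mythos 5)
Your proof is correct and follows essentially the same route as the paper: a direct computation in a chosen basis, where $\Dod(\xi)=\sum_i(-1)^{|f_i|}f_i\Pi x_i^*$ is linear in the odd variables, so applying $\Delta=\sum_j\partial_{x_j}\partial_{\Pi x_j^*}$ lands in $\hat{S}V^*$ and the Koszul sign $(-1)^{|f_i|(|x_j|+1)}$ combines with $(-1)^{|f_i|}$ to give exactly the divergence sign $(-1)^{|f_j||x_j|}$. The paper merely phrases it by linearity for a single term $\xi=f\partial_{x_i}$, which is an inessential difference.
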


\begin{proof}
Choose a basis $x_1,\dots, x_n$ in $V^*$ and the odd dual basis $\Pi x_1^*,\dots, \Pi x_n^*\in\Pi V$. It suffices to consider the case $\xi=f\partial_{x_i}$ for some $i=1,\dots, n$. Then $\Dod(\xi)=(-1)^{|f|}f\Pi x_i^*$. We have
\[
\nabla(\xi)=(-1)^{|f||x_i|}\partial_{x_i}f=\partial_{x_i}\partial_{\Pi x_i^*}\left((-1)^{|f|}f\Pi x_i^*)\right)=\Delta\left((-1)^{|f|}f\Pi x_i^*\right)
\]
as required.
\end{proof}

\begin{rem}
Proposition~\ref{divlap} can be reformulated as the identity $\nabla(X_{\Dod(\xi)})=2\nabla(\xi)$. Note that in the even case one always has $\nabla(X_{\Dev(\xi)})=0$.
\end{rem}

\section{Unimodular $L_\infty$~algebras}
For a dg vector space $(V,d)$ consider the following graded Lie algebra
\[
\g[V]:=\Der_{\geq 2}(\hat{S}V^*)\ltimes\Pi\hat{S}_{\geq 1}V^*,
\]
the semidirect product of $\Der_{\geq 2}(\hat{S}V^*)$ and $\Pi\hat{S}_{\geq 1}V^*$. The Lie bracket is defined by the formula
\[
[(\xi,\Pi f),(\eta,\Pi g)]=\left([\xi,\eta],\Pi\xi(g)+(-1)^{(|f|+1)|\eta|}\Pi\eta(f)\right).
\]
where $\xi,\eta\in\Der(\hat{S}V^*)$, $f,g\in\hat{S}V^*$. The differential $d$ in $V$ induces one in $\g[V]$ acting as by the Lie derivative along the linear vector field $d$. Furthermore, introduce an \emph{external} differential $d_e$ in $\g[V]$ by setting $d_e|_{\Pi\hat{S}V^*}=0$ and
\[
d_e(\xi)=\frac{1}{2}\Pi\nabla(\xi)\in\Pi\hat{S}_{\geq 1}V^*
\]
for $\xi\in\Der_{\geq 2}(\hat{S}V^*)$.
\begin{lem}\label{tracezero}
The operator $d+d_e$ in $\g[V]$ squares to zero.
\end{lem}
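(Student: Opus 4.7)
The plan is to check separately that $d^2=0$, $d_e^2=0$, and $dd_e+d_ed=0$. The first is immediate because $d$ acts by the Lie derivative along the linear vector field $L_d$ and $L_d^2=0$. The second follows from the very definition: $d_e$ takes values in $\Pi\hat{S}_{\geq 1}V^*$, where it is declared to vanish.

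Thus the heart of the matter is the anticommutation $dd_e+d_ed=0$. On the summand $\Pi\hat{S}_{\geq 1}V^*$ both terms die trivially ($d$ preserves this summand and $d_e$ kills it), so the only substantive case is a homogeneous derivation $\xi\in\Der_{\geq 2}(\hat{S}V^*)$. For such $\xi$ I would expand
\[
d_ed(\xi)=\tfrac{1}{2}\Pi\nabla[L_d,\xi],\qquad dd_e(\xi)=\tfrac{1}{2}d(\Pi\nabla\xi),
\]
and simplify the first using the divergence-of-commutator identity~\eqref{divergcom}:
\[
\nabla[L_d,\xi]=L_d(\nabla\xi)-(-1)^{|\xi|}\xi(\nabla L_d).
\]

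The key algebraic input is that $\nabla L_d=0$. Expanding $L_d=\sum_{i,j}a_{ji}x_j\partial_{x_i}$ in a homogeneous basis, a non-vanishing diagonal coefficient $a_{ii}$ would have to be a non-zero scalar of odd degree (because $L_d$ is odd), which is impossible. With this, $d_ed(\xi)=\tfrac{1}{2}\Pi L_d(\nabla\xi)$, and this matches $dd_e(\xi)$ with the opposite sign in view of the Koszul convention under which $d$ acts on the parity-reversed summand $\Pi\hat{S}_{\geq 1}V^*$. The main obstacle is therefore purely sign-bookkeeping; once the convention governing the action of $d$ on $\Pi\hat{S}_{\geq 1}V^*$ is pinned down, the whole identity is driven by the single observation $\nabla L_d=0$ together with~\eqref{divergcom}.
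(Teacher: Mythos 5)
Your proposal is correct and follows essentially the same route as the paper: both reduce to checking $d\circ d_e+d_e\circ d=0$ on a vector field $\xi\in\Der_{\geq 2}(\hat{S}V^*)$, apply the divergence-of-commutator formula~\eqref{divergcom}, and invoke the vanishing of $\nabla L_d$ (the paper phrases this as the tracelessness of the odd linear map $d$, which is exactly your diagonal-coefficient argument), with the remaining cancellation being the Koszul sign from moving $d$ past $\Pi$.
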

\begin{proof}
It is clear that each $d_e$ and $d$ squares to zero. It remains to prove that \[d\circ d_e+d_e\circ d=0.\] We only need to verify this identity on a formal vector field $\xi\in\Der_{\geq 2}(\hat{S}V^*)\hookrightarrow \g[V]$. Note that the linear map $d$ is traceless and thus, its corresponding linear vector field has zero divergence. Using this and formula (\ref{divergcom}) we compute:
\begin{align*}
(d\circ d_e+d_e\circ d)(\xi)&=d(\Pi\nabla\xi)+\Pi\nabla[d,\xi]\\
&=-\Pi d(\nabla\xi)+\Pi d(\nabla\xi)(-1)^{|\xi|}\Pi\xi(\nabla d)\\
&=0.
\end{align*}
\end{proof}
We can now define a unimodular $L_\infty$~algebra structure on the \emph{parity reversion} of $V$ as an MC element in $\g[V]$; equivalently such a structure on $V$ is an MC element in $\g[\Pi V]$. In more detail:
\begin{defi}
A unimodular $L_\infty$~algebra structure on $V$  is a pair $(m,f)$ consisting of an odd vector field $m\in\Der_{\geq 2}(\hat {S}\Pi V^*)$ and an even formal function $f\in\hat{S}\Pi V^*$, which satisfy the following equations:
\begin{align*}
&d(m)+\frac{1}{2}[m,m]=0;\\
&d(f)+\frac{1}{2}\nabla(m)+\xi(f)=0.
\end{align*}
Two unimodular $L_\infty$~algebra structures on $V$ are called \emph{equivalent} if they are equivalent as MC elements in the pronilpotent dgla $\g[\Pi V]$. In more detail:
$(m,f)$ is equivalent to $(m^\prime, f^\prime)$ if there exists an even vector field $\eta\in\Der_{\geq 2}(\hat {S}\Pi V^*)$ and and odd function $h\in \hat{S}_{\geq 1}\Pi V^*$ such that
\begin{equation}
m^{\prime}=e^{\eta}\cdot m :=e^{\eta}(m+d) e^{-\eta}-d;
\end{equation}
\begin{equation}\label{action}f^\prime=e^\eta\cdot f+(m+d)(h):=e^{\eta}(f)+(m+d)(h).
\end{equation}
\end{defi}

\begin{rem}\
\begin{enumerate}
\item
Sometimes it will be convenient for us to abuse the notation by referring to the first component $m$ of the pair $(m,f)$ as a unimodular $L_\infty$~algebra.
\item
A unimodular $L_\infty$~algebra can also be described as an algebra over the cobar construction of the wheeled closure of the operad $\Com$; cf.~\cite{Gran}. We will not use this point of view.
\item
The theory already exhibits all its properties when the internal differential $d$ on $V$ vanishes. In fact, choosing a split quasi-isomorphism $H(V)\hookrightarrow V$ we obtain an inclusion of dglas $\g[\Pi H(V)]\hookrightarrow \g[\Pi V]$ which is a filtered quasi-isomorphism. Thus, equivalence classes of unimodular algebras on $V$ and on $H(V)$ coincide. Moreover, this also implies that an $L_\infty$~structure on $V$ admits a unimodular lift if and only if the minimal model on $H(V)$ admits a unimodular lift (the theory of minimal models is recalled in Appendix~\ref{app:operads}).
\end{enumerate}
\end{rem}

\begin{defi}
The Lie subalgebra of $\Der(\hat{S}V^*)$ consisting of vector fields with zero divergence will be denoted by $\SDer(\hat{S}V^*)$. We will also use the notation
\[\SDer_{\geq 2}(\hat{S}V^*):=\SDer(\hat{S}V^*)\bigcap\Der_{\geq 2}(\hat{S}V^*).\]

A unimodular $L_\infty$~algebra structure on $V$ of the form $(m,0)$ is called \emph{strictly unimodular}. In that case $m$ necessarily belongs to $\SDer(\hat{S}\Pi V^*)$.
\end{defi}

It turns out that considering only strictly unimodular $L_\infty$~structures leads, with a small caveat, to no loss of generality. In preparation for this result we need the following elementary lemma showing that the divergence map is almost always surjective.

\begin{lem}\label{oddbehaviour}
Let $V$ be a vector space of dimension $l|n$. If $l>0$ then the map $\nabla\co\Der_{\geq 2}\hat{S}V^*\to\hat{S}_{\geq 1}V^*$ is surjective. If $l=0$ then the cokernel of $\nabla$ is one-dimensional. More precisely, if $y_1,\dots, y_n$ is a basis in the odd space $V^*$ then the element $y_1\cdot\ldots\cdot y_n\in \hat{S}V^*$ does not belong to the image of $\nabla$.
\end{lem}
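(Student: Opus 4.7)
The plan is to fix a basis and reduce the statement to a direct computation on monomials. Choose an even basis $x_1,\dots,x_l$ and an odd basis $y_1,\dots,y_n$ of $V^*$, so that $\hat{S}V^* \cong \ground[[x_1,\dots,x_l]] \otimes \Lambda(y_1,\dots,y_n)$. Any $\xi \in \Der_{\geq 2}(\hat{S}V^*)$ decomposes uniquely as $\xi = \sum_i f_i\partial_{x_i} + \sum_j g_j\partial_{y_j}$ with $f_i, g_j \in \hat{S}_{\geq 2}V^*$, and the definition of divergence gives
\[
\nabla \xi = \sum_i \partial_{x_i} f_i + \sum_j (-1)^{|g_j|} \partial_{y_j} g_j.
\]
I would then analyse the image of $\nabla$ monomial-by-monomial in $\hat{S}_{\geq 1}V^*$.

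For $l > 0$, I claim every monomial $M \in \hat{S}_{\geq 1}V^*$ lies in the image. Writing $M = x_1^a M'$ with $M'$ independent of $x_1$, the element $f := \frac{1}{a+1} x_1 M$ belongs to $\hat{S}_{\geq 2}V^*$ and satisfies $\partial_{x_1} f = M$, hence $\nabla(f \partial_{x_1}) = M$. Linearity then yields surjectivity of $\nabla$.

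For $l = 0$, the algebra $\hat{S}V^* = \Lambda(y_1,\dots,y_n)$ has linear basis $\{y_S := \prod_{i \in S} y_i : S \subseteq \{1,\dots,n\}\}$. Any $y_S$ with $1 \leq |S| < n$ lies in the image: choose $j \notin S$, note that $y_{S \cup \{j\}} \in \Lambda^{\geq 2}$, and compute $\partial_{y_j}(y_{S \cup \{j\}}) = \pm y_S$. The main point, and the only step requiring genuine thought, is to show that $y_1 \cdots y_n$ is \emph{not} in the image. This follows from the observation that for each $j$ the operator $\partial_{y_j}\colon \Lambda(y_1,\dots,y_n) \to \Lambda(y_1,\dots,y_n)$ has image contained in the subalgebra generated by $\{y_i : i \neq j\}$, since $\partial_{y_j}$ is an odd derivation with $\partial_{y_j}(y_j) = 1$ and $\partial_{y_j}(y_i) = 0$ for $i \neq j$. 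Consequently every summand $\partial_{y_j} g_j$ has vanishing coefficient at the top monomial $y_1 \cdots y_n$, and so does $\nabla \xi$. Combined with the previous step, this shows that the cokernel of $\nabla$ is precisely one-dimensional and spanned by $y_1 \cdots y_n$, giving both assertions of the lemma.
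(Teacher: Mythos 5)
Your proof is correct and takes essentially the same approach as the paper's: reduce to monomials, hit a monomial containing $x_1^a$ with the vector field $\frac{1}{a+1}x_1 M\,\partial_{x_1}$ in the even case, and use a missing odd variable $y_j$ via $y_j f\,\partial_{y_j}$ in the odd case. The only difference is that you spell out why $y_1\cdots y_n$ is not in the image (each $\partial_{y_j}$ lands in the subalgebra generated by the $y_i$ with $i\neq j$), a step the paper dismisses as clear.
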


\begin{proof}
Choose a basis $x_1,\dots,x_l,y_1,\dots,y_n$ in $V^*$ where the $x_i$s and $y_i$s are even and odd basis elements respectively. Suppose that $l>0$; to show that $\nabla$ is surjective it suffices to show that any monomial $f$ in $x_i$ and $y_i$ is in the image of $\nabla$. Suppose that $x_1$ enters into $f$ in the power $j\geq 0$. Then $f=\nabla \left(\frac{x_1}{j+1}f\partial_{x_1}\right)$ as required. Now suppose that $l=0$; consider a monomial $f$ in the odd variables $y_i$. If there exists $j=1,\dots, n$ for which $y_j$ does not enter into $f$ then $f=\pm\nabla (y_jf\partial_{y_j})$. It is also clear that the element $y_1\cdot \ldots \cdot y_n$ is not in the image of $\nabla$. The lemma is proved.
\end{proof}

\begin{theorem}\label{strict}
Let $(m,f)$ be a unimodular $L_\infty$~algebra structure supported on a dg vector space $V$ of dimension $l|n$.
\begin{enumerate}
\item
If $n>0$ or if $l$ is an odd number then $(m,f)$ is equivalent to a strictly unimodular $L_\infty$~structure.
\item
If $n=0$ and $l$ is an even number then $(m,f)$ is equivalent to a unimodular $L_\infty$~structure of the form $(m', c\cdot x_1 \cdots  x_l)$ where
$m'$ is a unimodular $L_\infty$~structure, $x_1,\dots, x_l$ is a basis in $\Pi V^*$ and $c$ is a number.
\end{enumerate}
\end{theorem}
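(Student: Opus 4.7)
The plan is to induct on the leading polynomial degree of the function component $f$, killing its leading homogeneous piece at each stage by a gauge transformation of the form $(\eta,0)\in\g[\Pi V]$ with $\eta$ an even vector field. At each stage this reduces to solving a linear equation for $\nabla\eta$, and the obstruction is exactly what Lemma~\ref{oddbehaviour} computes.

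The first step is to compute the effect of $(\eta,0)$ on the MC element $(m,\Pi f)$ in the pronilpotent dgla $(\g[\Pi V],d+d_e)$. The infinitesimal gauge action is
\[
[(\eta,0),(m,\Pi f)]-(d+d_e)(\eta,0)\;=\;\bigl([\eta,m]-d\eta,\;\Pi\eta(f)-\tfrac{1}{2}\Pi\nabla\eta\bigr),
\]
using the defining formula $d_e(\eta)=\tfrac{1}{2}\Pi\nabla\eta$. Since $\eta\in\Der_{\geq 2}$ strictly increases polynomial degree, the term $\eta(f)$ lives in higher polynomial degree than $f$. Hence the leading homogeneous piece of $f$ is changed purely by $-\tfrac{1}{2}\nabla\eta$, at that same polynomial degree.

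Writing $f=f_k+(\text{higher order})$ with $f_k\in S^k\Pi V^*$ nonzero and even, the inductive step reduces to solving $\nabla\eta=2f_k$ for an even $\eta$. Since $\nabla$ is parity-preserving, the cokernel of its restriction to even elements is the even part of the total cokernel. Lemma~\ref{oddbehaviour} applied to $\Pi V^*$, which has dimension $n|l$, says that the cokernel of $\nabla$ is zero when $n>0$ and otherwise is spanned by $x_1\cdots x_l$, of parity $l\bmod 2$. Three cases arise: if $n>0$, the equation is always solvable; if $n=0$ and $l$ is odd, the cokernel generator is odd while $f_k$ is even, so the equation is again solvable; if $n=0$ and $l$ is even, the generator is even and lies in polynomial degree $l$, so the equation is solvable for all $k<l$, and at $k=l$ we can reduce $f_l$ only modulo a scalar multiple of $x_1\cdots x_l$. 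This matches the case distinction in the theorem.

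Finally, I would assemble these steps into the claimed equivalences. In case (1) each successive gauge lies in strictly higher polynomial order, so the composition is a well-defined element of the gauge group of the pronilpotent dgla $\g[\Pi V]$ and sends $(m,f)$ to some $(m',0)$; strict unimodularity of $m'$ then follows automatically from the MC equation $(d+m')(0)+\tfrac{1}{2}\nabla(m')=0$. In case (2), $\hat{S}\Pi V^*=\Lambda\Pi V^*$ is finite-dimensional, the induction terminates at step $k=l$, and leaves the residue $c\cdot x_1\cdots x_l$; unimodularity of $m'$ on its own here is automatic since both $d$ and $m'(x_1\cdots x_l)$ vanish in the purely even case. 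The main technical point I expect to require care is the bookkeeping of the successive finite gauges: one must check that the higher-order corrections coming from $\eta(f)$ in the integrated (rather than infinitesimal) action do not spoil the leading-order analysis at later inductive steps. This is a standard pronilpotent MC filtration argument, but needs to be spelled out to see that the induction really produces the claimed equivalence.
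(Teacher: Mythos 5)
Your proposal is correct, and it is complete-able exactly as outlined, but it takes a genuinely different route from the paper. Both arguments rest on Lemma~\ref{oddbehaviour} plus the same parity count (the cokernel generator $x_1\cdots x_l$ is even precisely when $l$ is even, while the function component of an MC element must be even), but the paper packages your degree-by-degree induction into general machinery: since $\g[\Pi V]$ with the differential $d+d_e$ is essentially the mapping cone of $\nabla\co\Der_{\geq 2}(\hat{S}\Pi V^*)\to\hat{S}_{\geq 1}\Pi V^*$, surjectivity of $\nabla$ makes the inclusion $\SDer_{\geq 2}(\hat{S}\Pi V^*)=\Ker\nabla\hookrightarrow\g[\Pi V]$ a \emph{filtered} quasi-isomorphism, and the paper then invokes a standard theorem (\cite[Theorem 2.1]{Get}) asserting that filtered quasi-isomorphisms of pronilpotent dglas induce bijections on MC moduli sets; in the purely even case it replaces $\g[\Pi V]$ by the quasi-isomorphic algebra $\SDer_{\geq 2}(\hat{S}\Pi V^*)\oplus\Pi\langle x_1\cdots x_l\rangle$ and reads off the MC elements directly. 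Your gauge-by-gauge induction is in effect a self-contained, hands-on proof of the only consequence of that citation which the theorem actually needs (every MC element is equivalent to one with vanishing function component), and it makes visible two things the paper's argument hides: that only gauges of the form $(\eta,0)$ are required, and that the obstruction lives exactly in the top exterior degree. What the paper's route buys in exchange is brevity and a strictly stronger conclusion (a bijection of moduli sets, not just essential surjectivity onto strict structures). The convergence bookkeeping you flag at the end is precisely the content of the cited filtration theorem and is closed by your own degree estimates: gauging by $\eta_k\in\Der_{k+1}$ changes $f$ only in polynomial degrees $\geq k$ (killing degree $k$ on the nose, since $\eta_k(f)$ and all iterated-bracket corrections land in degree $\geq 2k>k$) and changes $m$ only in degrees $\geq k+1$, so the partial products of gauges form a Cauchy sequence in the pronilpotent group and the limit exists; so there is no gap, though, as you say, this should be written out.
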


\begin{proof}
Let $V$ be the underlying space of a unimodular $L_\infty$~algebra. Suppose first that $V$ is not purely even; then $\Pi V^*$ is not purely odd. In that case by Lemma~\ref{oddbehaviour} the map $\nabla\co\Der_{\geq 2}(\hat{S}\Pi V^*)\to \hat{S}_{\geq 1}\Pi V^*$ is onto. It follows that $\SDer_{\geq 2}(\hat{S}\Pi V^*)=\ker\nabla\co\Der_{\geq 2}(\hat{S}\Pi V^*)\to \hat{S}_{\geq 1}\Pi V^*$ is quasi-isomorphic to the dgla $\g[\Pi V]$. Note that $\g[\Pi V]$ has a filtration: $F_p\g[\Pi V]=\Der_{\geq p+1}(\hat{S}\Pi V^*)\ltimes \hat{S}_{p}V^*$ and the inclusion $i\co\SDer_{\geq 2}(\hat{S}\Pi V^*)\hookrightarrow \g[\Pi V]$ is compatible with this filtration and is a filtered quasi-isomorphism. It is well known (cf.~for example, \cite[Theorem 2.1]{Get}) that in such a situation the map $i$ induces a bijection between MC moduli sets of $\SDer_{\geq 2}(\hat{S}\Pi V^*)$ and $\g[\Pi V]$. The desired conclusion follows.

The case when $V$ is even is not much different; let $\dim V=l|0$. Then $\dim \Pi V^*=0|l$; choose an odd basis $x_1,\dots, x_l$ in this space. The dgla $\g[\Pi V]$ is quasi-isomorphic to the graded Lie algebra $\SDer_{\geq 2}(\hat{S}\Pi V^*)\oplus \Pi\langle x_1\cdot \ldots \cdot x_l\rangle$. Here $\langle x_1\cdot \ldots\cdot x_l\rangle$ denotes a one-dimensional space spanned by the central element $x_1\cdot \ldots\cdot x_l$. If $l$ is odd then any MC element in $\SDer_{\geq 2}(\hat{S}\Pi V^*)\oplus \Pi\langle x_1\cdot \ldots \cdot x_l\rangle$ has the form $(m^\prime,0)$ where $m'$ is an MC element in $\SDer_{\geq 2}(\hat{S}\Pi V^*)$ (and so is a unimodular $L_\infty$~structure). If $l$ is even then, in addition, the element $x_1\cdot \ldots\cdot x_l$ is MC and so, an arbitrary MC element is a pair $(m^\prime, c(x_1\cdot \ldots \cdot x_l))$ where $m'$ is an MC element in $\SDer_{\geq 2}(\hat{S}\Pi V^*)$ and $c$ is any scalar.
\end{proof}

\begin{rem}\
\begin{itemize}
\item
Another point of view on the above result is afforded by the notion of the Berezin volume form, \cite{Man}. Namely, let $m \in\Der_{\geq 2}(\hat{S}\Pi V^*)$ be an $L_\infty$~algebra structure on $V$ and $f\in \hat{S} \Pi V^*$. Then the condition that $(m,f)$ determines a unimodular $L_\infty$~algebra is equivalent to the statement that $m$ preserves the volume form $e^f \mu$ where $\mu$ is a standard constant volume form given by choosing linear coordinates on $\Pi V$. It is not hard to prove that if $V$ is not purely even or if the even dimension of $V$ is an odd number, then any two volume forms on $\Pi V$ are equivalent via a formal diffeomorphism and thus, one can take $f=0$.
\item
The case of a purely even space $V$ whose dimension is an even number is slightly different. Let us call such a dimension \emph{exceptional}. Theorem~\ref{strict} effectively shows that the moduli space of unimodular $L_\infty$~structures on a space of exceptional dimension is the direct product of the moduli space of strict unimodular structures on $V$ and the ground field $\ground$.
\item
The notion of a unimodular $L_\infty$~algebra should be compared to that of a \emph{symplectic} $L_\infty$~algebra, cf.~\cite{HL'} concerning this notion. In the case when a symplectic structure is the standard linear one (which can always be achieved by the Darboux theorem), symplectic $L_\infty$~algebras are usually called cyclic $L_\infty$~algebra.
\end{itemize}
\end{rem}
\begin{cor}\label{liftstrict}
An $L_\infty$~algebra $(V,m)$ admits a unimodular lift if and only if it is equivalent to a strictly unimodular $L_\infty$~algebra.
\end{cor}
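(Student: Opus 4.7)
The corollary should follow almost formally from Theorem~\ref{strict} once one notes that the projection
\[
\pi\co\g[\Pi V]\twoheadrightarrow\Der_{\geq 2}(\hat{S}\Pi V^*),\qquad (\xi,\Pi g)\mapsto\xi,
\]
is a surjective dgla morphism. Bracket compatibility is immediate from the semidirect-product formula, and compatibility with differentials is equally clear: the internal differential $d$ acts componentwise, while the external differential $d_e$ takes $\Der_{\geq 2}(\hat{S}\Pi V^*)$ into $\Pi\hat{S}_{\geq 1}\Pi V^*$ and is thus killed by $\pi$. Consequently $\pi$ induces a map of MC moduli sets, which under the established dictionary sends the unimodular equivalence class of $(m,f)$ to the ordinary $L_\infty$~equivalence class of $m$.

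For the ``if'' direction, assume $m=e^\eta\cdot m'$ for some even $\eta\in\Der_{\geq 2}(\hat{S}\Pi V^*)$ and some strictly unimodular $m'$, so that $(m',0)$ is an MC element of $\g[\Pi V]$. Applying the gauge action of $(\eta,0)\in\g[\Pi V]^0$ to $(m',0)$ yields another MC element $(\tilde m,f)\in\g[\Pi V]$, i.e. a unimodular $L_\infty$~structure on $V$. By naturality of $\pi$ its first component is $\tilde m=\pi\bigl(e^{(\eta,0)}\cdot(m',0)\bigr)=e^{\eta}\cdot m'=m$, which exhibits $(m,f)$ as the sought unimodular lift.

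For the ``only if'' direction, suppose $m$ admits a unimodular lift $(m,f)$ and apply Theorem~\ref{strict}. In the non-exceptional case, $(m,f)$ is unimodular-equivalent to $(m',0)$ with $m'$ strictly unimodular by definition, and projecting via $\pi$ gives $m\sim m'$ as $L_\infty$~algebras. In the exceptional case ($V$ purely even of even dimension $l$), $(m,f)$ is unimodular-equivalent to $(m',c\,x_1\cdots x_l)$; inspection of the proof of Theorem~\ref{strict} shows that $m'$ is itself an MC element in $\SDer_{\geq 2}(\hat{S}\Pi V^*)$, hence already strictly unimodular, and again $\pi$ delivers an $L_\infty$~equivalence $m\sim m'$ to a strictly unimodular structure.

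The real work has been done by Theorem~\ref{strict}; the two directions above are then short functoriality arguments and there is no serious obstacle. The only point requiring attention is the exceptional case, where one must recognise that Theorem~\ref{strict} supplies $m'$ not merely as the first entry of a unimodular pair but as a strictly unimodular structure in its own right, so that the projection $\pi$ really lands in the class of strictly unimodular algebras.
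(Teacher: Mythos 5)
Your proof is correct and follows essentially the same route as the paper: both directions reduce to Theorem~\ref{strict} together with the fact that gauge equivalences in $\g[\Pi V]$ project to, and lift from, gauge equivalences in $\Der_{\geq 2}(\hat{S}\Pi V^*)$ — the paper packages your explicit gauging of $(m',0)$ by $(\eta,0)$ as an appeal to Proposition~\ref{prop:gaugeequivoffibres}. The only cosmetic difference is in the exceptional case, where the paper deduces that $m'$ is strictly unimodular from the centrality of $x_1\cdots x_l$ in $\g[\Pi V]$ (so that $(m',0)$ is itself an MC element) rather than by inspecting the proof of Theorem~\ref{strict}; both observations yield the same conclusion.
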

\begin{proof}
If $(V,m)$ is equivalent to a strictly unimodular $L_\infty$~algebra $(V,m')$ then since the latter admits a unimodular lift, namely the trivial one, so does $(V,m)$ by Proposition \ref{prop:gaugeequivoffibres}. Conversely, if $(V,m)$ admits a unimodular lift, at worst it must be equivalent to a unimodular $L_\infty$~structure of the form $(m',c(x_1\cdot\ldots\cdot x_l))$ as in Theorem \ref{strict}. Note that the $L_\infty$~structures $m$ and $m'$ are equivalent. But since $x_1\cdot\ldots\cdot x_l$ is a central element in the dgla $\g[V]$, the pair $(m',0)$ is also an MC element in it and so $m'$ is in fact a strictly unimodular $L_\infty$~structure.
\end{proof}

\begin{rem}
IF $(V,m)$ is $L_\infty$~quasi-isomorphic to a strictly unimodular $L_\infty$~algebra then, by passing to minimal models, we see that $(V,m)$ admits a unimodular lift and is therefore in fact equivalent (i.e.~pointed $L_\infty$~isomorphic) to a strictly unimodular $L_\infty$~structure on $V$.
\end{rem}

\begin{rem}
There is another criterion for an $L_\infty$~algebra to be equivalent to a strictly unimodular one. Recall from \cite[Section 7]{Gran} that associated to an $L_\infty$~structure $m\in\Der(\hat{S}\Pi V^*)$ is a characteristic class $[\nabla(m)]\in \HCE(V)$ which vanishes if and only if $\xi$ admits a unimodular lift. Thus, the vanishing of $[\nabla(m)]$ is equivalent to the condition that $V$ is $L_\infty$~quasi-isomorphic to a strictly unimodular $L_\infty$~algebra.
\end{rem}

\begin{example}\label{nonunim}
Let $V$ be an ordinary graded Lie algebra. It is easy to see that $V$ is unimodular if and only if the adjoint action on $V$ by any element in $V$ has zero trace. In that case the notions of unimodularity and of strict unimodularity are the same. This property holds if $V$ is a semisimple or nilpotent Lie algebra. The smallest example of a non-unimodular Lie algebra is given by a Lie algebra spanned by two vectors $x,y$ in degree zero with $[x,y]=y$.
\end{example}

\section{Spaces of unimodular lifts}\label{spaceoflifts}
Given an $L_\infty$~algebra $(V,m)$ one can ask whether it can be lifted to a unimodular $L_\infty$~algebra $(m,f)$ and if so, in how many ways. We have seen (cf.~\cite{Gran}) that the existence of a lifting is equivalent to the vanishing of the characteristic class $[\nabla(m)]$ or, equivalently (Corollary~\ref{liftstrict}) $(V,m)$ is $L_\infty$~equivalent to a strictly unimodular $L_\infty$~algebra. The purpose of this subsection is to give a homological characterization of the space of unimodular lifts. The result is, in principle, a consequence of Theorem~\ref{thm:fibration} but we will give here a simple and direct treatment. We restrict to considering lifts of \emph{strictly unimodular} $L_\infty$~algebras. Since any $L_\infty$~algebra which admits a unimodular lift is $L_\infty$~equivalent to a strictly unimodular one according to Corollary~\ref{liftstrict}, this restriction results in no loss of generality by Proposition~\ref{prop:gaugeequivoffibres}.

\begin{defi}
Let $(V,m)$ be a strictly unimodular $L_\infty$~algebra and let $(m,f)$ and $(m,g)$ be unimodular lifts of $m$. Then $(m,f)$ and $(m,g)$ are called equivalent, if they are gauge equivalent as MC elements in the dgla $\g[\Pi V]$.
\end{defi}

Recall that an $L_\infty$~automorphism of an $L_\infty$~algebra $((V,d),m)$ is an automorphism $F\in\Aut(\hat{S}\Pi V^*)$ such that $F\circ (m+d)\circ F^{-1}=m+d$. Excluding automorphisms with a nontrivial reductive part we obtain a pronilpotent group of \emph{pointed} $L_\infty$~automorphisms which will be denoted by $\sAut_{L_\infty}(V)$. It is easy to see that any pointed automorphism of $(V,m)$ has the form $e^{\xi}$ where $\xi\in\Der_{\geq 2}(\hat{S}\Pi V^*)$ with $d(\xi)+[m,\xi]=0$.

\begin{defi}
The quotient of the group $\sAut_{L_\infty}(V)$ by the normal subgroup consisting of elements of the form $e^{d(\eta)+[m,\eta]}$ with $\eta\in\Der_{\geq 2}(\hat{S}\Pi V^*)$ will be called the group of \emph{homotopy $L_\infty$~automorphisms} of $V$ and will be denoted by $\HAut_{L_\infty}(V)$.
\end{defi}

\begin{rem}
Consider the complex $\CE_{\geq 2}(V,V)$; it is a clearly a pronilpotent dgla and the space of cocycles $\operatorname{ZCE}_{\geq 2}(V,V)$ forms a graded Lie subalgebra in it; moreover the cohomology $\HCE_{\geq 2}(V,V)$ also has the structure of a graded Lie algebra. Then $\sAut_{L_\infty}(V)$ is the pronilpotent group corresponding the the pronilpotent Lie algebra $\operatorname{ZCE}^0_{\geq 2}(V,V)$ whereas $\HAut_{L_\infty}(V)$ is the group corresponding to $\HCE^0_{\geq 2}(V,V)$.
\end{rem}

By definition the group $\Aut_{L_\infty}(V)$ acts on $\hat{S}\Pi V^*$, however we need to consider a twisted action, defined by formula (\ref{action}). This twisted action naturally lifts to an action of the group $\HAut_{L_\infty}(V)$ on $\HCE(V)$.

\begin{rem}
The twisted action defined above has the following natural interpretation; let $\mu$ be the constant Berezin volume form on $\Pi V$ corresponding to a linear structure on $\Pi V$. Then for $f\in \hat{S}\Pi V^*$ the action of a formal diffeomorphism $e^\xi$ on $f \mu$ is as follows: $e^{\xi}(f \mu)=(e^{\xi}\cdot f) \mu$.
\end{rem}

\begin{theorem}\
\begin{enumerate}
\item
Let $(V,m)$ be a strictly unimodular $L_\infty$~algebra. Then the correspondence $f\mapsto (m,f)$ establishes a 1--1 correspondence between cocycles $f\in \operatorname{ZCE}^0_{\geq 1}(V)$ and $L_\infty$~unimodular structures on $V$ lifting $m$.
\item
Two such structures $(m,f)$ and $(m,g)$ are equivalent if and only if there exists a pointed $L_\infty$~automorphism $F\in\Aut(\hat{S}\Pi V^*)$ such that the CE cocycles $f$ and $F \cdot g$ are cohomologous.
\item
The set of equivalence classes of unimodular lifts of $(V,m)$ is in 1--1 correspondence with the set of orbits of $\HCE^0_{\geq 1}(V)$ with respect to the twisted action of the group $\HAut_{L_\infty}(V)$.
\end{enumerate}
\end{theorem}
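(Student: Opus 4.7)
My plan is to unwind the three parts directly from the definition of the dgla $\g[\Pi V]$, its MC equation, and its gauge action, leaving only one small homological verification at the end. For part (1), strict unimodularity of $(V,m)$ gives $\nabla(m)=0$, so the second component of the MC equation $d(f)+\frac{1}{2}\nabla(m)+m(f)=0$ for $(m,f)\in\g[\Pi V]$ collapses to $(d+m)(f)=0$. Thus unimodular lifts of $m$ correspond bijectively to even cocycles in $\hat{S}_{\geq 1}\Pi V^*$, namely elements of $\operatorname{ZCE}^0_{\geq 1}(V)$.

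For part (2), a gauge equivalence between the MC elements $(m,f)$ and $(m,g)$ in $\g[\Pi V]$ is implemented by a pair $(\eta,h)$ with $\eta\in\Der_{\geq 2}(\hat{S}\Pi V^*)$ even and $h\in\hat{S}_{\geq 1}\Pi V^*$ odd. The requirement that the first component $m$ be preserved, $m=e^\eta\cdot m$, is exactly the statement that $F:=e^\eta$ commutes with $d+m$, i.e.\ $F\in\sAut_{L_\infty}(V)$. The second component of the gauge equation then reads $f=F\cdot g+(d+m)(h)$, which says precisely that $f$ and $F\cdot g$ are cohomologous in $\CE(V)$. Conversely, by pronilpotency of $\sAut_{L_\infty}(V)$ every such $F$ has the form $e^\eta$ for some $\eta$ with $d(\eta)+[m,\eta]=0$, so the two descriptions of equivalence agree.

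For part (3), it remains to check that the twisted action of $\sAut_{L_\infty}(V)$ on $\operatorname{ZCE}^0_{\geq 1}(V)$ descends to a well-defined action of the quotient $\HAut_{L_\infty}(V)$ on $\HCE^0_{\geq 1}(V)$. Any $F\in\sAut_{L_\infty}(V)$ commutes with $d+m$, so it sends cocycles to cocycles and coboundaries to coboundaries. The one mildly non-routine point is that the normal subgroup of elements $e^{d(\eta)+[m,\eta]}$ acts trivially on cohomology. For this, set $\xi:=d(\eta)+[m,\eta]=[d+m,\eta]$; the graded Jacobi identity together with $[d+m,d+m]=0$ gives $[d+m,\xi]=0$. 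Hence the one-parameter family $F_t:=e^{t\xi}$ commutes with $d+m$, and for any cocycle $f$,
\[
F_1(f)-f=\int_0^1 F_t(\xi(f))\,dt=\int_0^1 F_t\bigl((d+m)\eta(f)\bigr)\,dt=(d+m)\biggl(\int_0^1 F_t(\eta(f))\,dt\biggr),
\]
which is a coboundary. Combining this with parts (1) and (2) yields the claimed bijection. The main (and only) obstacle is this last verification that homotopic $L_\infty$~automorphisms act trivially on Chevalley--Eilenberg cohomology; the rest is direct bookkeeping, and one could alternatively extract the statement from Theorem~\ref{thm:fibration}.
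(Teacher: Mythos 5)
Your parts (1) and (2) unfold the MC equation and the gauge action in $\g[\Pi V]$ exactly as the paper does, and in part (3) you supply a verification that the paper merely asserts. However, there is a genuine gap: you identify the action of a pointed automorphism $F=e^{\eta}$ on CE cochains with the plain action $f\mapsto e^{\eta}(f)$ (this is explicit in your part (3) computation, and implicit in part (2)), whereas the action occurring in the theorem is the \emph{twisted} one, and the two differ. The dgla $\g[\Pi V]$ carries the external differential $d_e(\xi)=\frac{1}{2}\Pi\nabla(\xi)$, so unwinding the gauge action honestly gives
\[
e^{(\eta,0)}\cdot(m,\Pi f)=\left(e^{\eta}\cdot m,\,\Pi\left(e^{\eta}(f)-\frac{1}{2}\sum_{n\geq 1}\frac{1}{n!}\,\eta^{n-1}(\nabla\eta)\right)\right),
\]
i.e.\ the second component acquires a divergence (log-Berezinian) correction; this correction is precisely why the action is called twisted (cf.\ the paper's remark interpreting it via Berezin volume forms). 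The correction cannot be dropped: take $d=0$, $m=0$ and $\Pi V$ one-dimensional purely even with coordinate $x$, so the CE differential vanishes and there are no nonzero coboundaries. Under the plain action the lift $f=0$ would be equivalent only to itself, yet its gauge orbit in $\g[\Pi V]$ contains the nonzero element $-\frac{1}{2}\sum_{n\geq 1}\frac{1}{n!}\eta^{n-1}(\nabla\eta)$ for $\eta=x^{2}\partial_{x}$. So if $F\cdot g$ is read as $F(g)$, part (2) is simply false; and if $F\cdot g$ is read correctly as the twisted action, your part (3) argument is incomplete, since it only proves that the linear part $e^{\xi}(f)-f$ is exact for $\xi=d(\eta)+[m,\eta]$ in the normal subgroup, saying nothing about the correction term. (Formula (\ref{action}) of the paper, read literally, invites the same conflation, but the example shows the correction must be present for the theorem to hold.)

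Fortunately the missing verification is of the same kind as the one you carried out. Strict unimodularity gives $\nabla m=0$, and $\nabla d=0$ as in the proof of Lemma~\ref{tracezero}, so formula (\ref{divergcom}) yields $\nabla\xi=\nabla[d+m,\eta]=(d+m)(\nabla\eta)$ for $\xi=[d+m,\eta]$; combined with $[d+m,\xi]=0$ this gives $\xi^{n-1}(\nabla\xi)=(d+m)\bigl(\xi^{n-1}(\nabla\eta)\bigr)$, hence the entire correction term of $e^{\xi}$ is a CE coboundary and the twisted action of the normal subgroup is indeed trivial on $\HCE^{0}_{\geq 1}(V)$. A similar one-line check (for $[d+m,\eta]=0$ one gets $(d+m)(\nabla\eta)=0$) is needed to see that the twisted action preserves $\operatorname{ZCE}^{0}_{\geq 1}(V)$ in the first place, which your appeal to ``$F$ commutes with $d+m$'' does not cover. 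With these additions, which use exactly your commutation trick, the argument closes.
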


\begin{proof}
A pair $(m,f)$ is a unimodular $L_\infty$~structure if and only if $d(f)+\frac{1}{2}\nabla(m)+m(f)=0$ which in the strictly unimodular case simply means that $(d+m)(f)=0$. i.e.~that $f$ is a CE cocycle; this proves (1). Given two unimodular lifts $(m,f)$ and $(m,g)$ they are equivalent if and only if there is an even element $(\xi,h)\in\g[\Pi V]$ such that $e^{(\xi,h)}\cdot (m,f)=(m,g)$. We have:
\[
e^{(\xi,h)}\cdot (m,f)=\left(e^\xi \circ(m+d)\circ e^{-\xi}-d,e^\xi\cdot f+(m+d)(h)\right),
\]
which implies (2). Part (3) follows from the observation that the (twisted) action of $\sAut_{L_\infty}(V)$ on $\operatorname{ZCE}^0(V)$ factors through the quotient $\HAut_{L_\infty}(V)$.
\end{proof}

We have the following corollary, which can be expressed solely in terms of strictly unimodular $L_\infty$~algebras. To formulate it, we need the following definition.

\begin{defi}
A formal automorphism $f\in\Aut(\hat{S}\Pi V^*)$ is called \emph{unimodular} if it is of the form $f=e^{\xi}$ where $\xi\in\Der_{\geq 2}(\hat{S}\Pi V^*)$ and $\nabla(\xi)=0$.
\end{defi}

In other words, a unimodular automorphism is an automorphism whose linear part is the identity and which is volume-preserving. Note that any unimodular lift of a strictly unimodular $L_\infty$~algebra $(V,m)$ is $L_\infty$~equivalent to a strictly unimodular one. Conversely, any strictly unimodular $L_\infty$~structure $m^\prime$ on $V$ that is $L_\infty$~equivalent to $m$ is equivalent (as a unimodular $L_\infty$~structure) to a unimodular lift of $(V,m)$. Furthermore, two strictly unimodular $L_\infty$~structures on $V$ are equivalent if and only if they are equivalent through a strictly unimodular $L_\infty$~map. We have, therefore, the following result.

\begin{cor}
Let $(V,m)$ be a strictly unimodular $L_\infty$~algebra. The set of strict unimodular equivalence classes of strictly unimodular $L_\infty$~algebras on $V$ which are $L_\infty$~equivalent to $(V,m)$ is in bijective correspondence with $\HCE^0_{\geq 1}(V)/\HAut_{L_\infty}(V)$.\qed
\end{cor}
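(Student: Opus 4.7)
The plan is to combine the three observations stated in the paragraph immediately preceding the corollary with the preceding theorem. Write $S$ for the set described in the corollary and $T$ for the set of unimodular equivalence classes of unimodular lifts of $(V,m)$. The theorem already identifies $T$ with $\HCE^0_{\geq 1}(V)/\HAut_{L_\infty}(V)$, so it suffices to construct a bijection $S\cong T$ and compose.

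First I would define $\phi\co S\to T$ as follows. Given a class $[m']\in S$, the pair $(m',0)$ is a unimodular $L_\infty$~structure on $V$ which, by the second observation, is unimodular-equivalent to some unimodular lift $(m,f)$ of $(V,m)$; I set $\phi([m']):=[(m,f)]$. In the reverse direction, given a class $[(m,f)]\in T$, the first observation produces a strictly unimodular $m'$ on $V$ with $(m,f)$ unimodular-equivalent to $(m',0)$; such $m'$ is automatically $L_\infty$~equivalent to $m$, so I set $\psi([(m,f)]):=[m']\in S$.

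Both maps are well-defined, and mutually inverse, as a direct consequence of the third observation: two strictly unimodular $L_\infty$~structures on $V$ are unimodular-equivalent if and only if they are strict-unimodular-equivalent. Indeed, if $m'_1,m'_2$ represent the same class in $S$, then $(m'_1,0)$ and $(m'_2,0)$ are unimodular-equivalent, so any unimodular lifts they are unimodular-equivalent to represent the same class in $T$; the same principle handles well-definedness of $\psi$ and the identities $\psi\circ\phi=\id$, $\phi\circ\psi=\id$. Composing $\phi$ with the theorem's identification then yields the stated bijection.

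The only genuinely nontrivial input is the third observation itself. Its proof amounts to showing that any gauge $e^{(\xi,h)}\in\g[\Pi V]$ carrying $(m'_1,0)$ to $(m'_2,0)$ can be replaced by one of the form $e^{(\xi',0)}$ with $\xi'$ divergence-free, which is again a surjectivity-of-$\nabla$ argument in the spirit of Lemma~\ref{oddbehaviour} and the proof of Theorem~\ref{strict}; this is the only real content in the argument. Once this is in hand, the remainder is formal bookkeeping between the three equivalence relations in play (strict unimodular equivalence on $V$, unimodular MC equivalence in $\g[\Pi V]$, and pointed $L_\infty$~equivalence on $V$).
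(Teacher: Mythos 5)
Your proposal is correct and follows essentially the same route as the paper, which states this corollary with an immediate \qed precisely because it is obtained by combining the preceding theorem with the three observations in the paragraph directly above it, exactly the bookkeeping you carry out. You also correctly isolate the third observation as the only nontrivial input and identify its mechanism (replacing an arbitrary gauge in $\g[\Pi V]$ between strictly unimodular structures by a divergence-free one, via the surjectivity of $\nabla$ and the filtered quasi-isomorphism $\SDer_{\geq 2}(\hat{S}\Pi V^*)\hookrightarrow\g[\Pi V]$ used in the proof of Theorem~\ref{strict}), which is the justification the paper itself leaves implicit.
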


\begin{example}\
\begin{enumerate}
\item
Let $m$ be the zero $L_\infty$~structure on $V$; it is, of course, strictly unimodular. The CE complex of $(V,m)$ has trivial differential; the group of pointed $L_\infty$~automorphisms of $V$ is simply the group $G$ of automorphisms of $\hat{S}_{\geq 1} \Pi V^*$ whose linear part is the identity. The moduli space of unimodular lifts of $m$ is identified with the set of even formal functions $f\in \hat{S}_{\geq 1} \Pi V^*$ modulo the twisted action of $G$. The classification of the corresponding orbits is, of course, an intractable problem.
\item
By contrast, let the space $V$ be even and consider a semisimple Lie algebra on $V$. It is easy to see that it is unimodular and that the group $\HAut_{L_\infty}(V)$ is trivial. Thus, the moduli space of unimodular lifts coincides with even elements in $\HCE_{\geq 1}(V)$.
\end{enumerate}
\end{example}

\section{Quantum lifts of cyclic $L_\infty$~algebras}
In this section we consider the notion of a quantum $L_\infty$~algebra and the problem of quantum lifting of cyclic $L_\infty$~algebras.

Let $(V,d)$ be a dg vector space with an odd symplectic form. Choose a pair of complementary Langrangian spaces $U,W\subset V$, the given symplectic form determines a nondegenerate pairing between them and so we have an isomorphism
\[
V\cong U\oplus W\cong U^*\oplus U.
\]
It follows that there is defined an operator $\Delta$ on the space of formal functions on $V$; it is clear that it does not depend on the choice of complementary Lagrangian subspaces (e.g. one can take for $U$ and $W$ the subspaces of even and odd vectors in $V$ respectively). 
 Next, we introduce the weight grading on the cdga $\hat{S} V^*[[h]]$ by requiring that for a monomial $f\in \hat{S} V^*$ of degree $n$ the element $fh^g$ has weight $2g+n$. Let $\h[V]\subset \hat{S} V^*[[h]]$ be the subspace of weight grading $>2$. The cdga $\hat{S} V^*[[h]]$ has the standard structure of an odd Poisson algebra and $\h[V]$ inherits this structure; in particular it is a dgla with respect to the odd bracket $\{,\}$. It is clear that $\h[V]$ is pronilpotent, as opposed to $\hat{S} V^*[[h]]$. As usual, we let $d$ act on $\h[V]$ by the Lie derivative.

\begin{lem}
The operator $d+h\Delta$ endows $\h[V]$ with the structure of an odd dgla.
\end{lem}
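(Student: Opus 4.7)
The odd Lie algebra structure on $\h[V]$ via the antibracket $\{,\}$ is inherited from $\hat{S}V^*[[h]]$, so the task is to verify that $d+h\Delta$ is compatible with this structure in the dgla sense. This reduces to three checks: that $d+h\Delta$ preserves the subspace $\h[V]$, that it squares to zero, and that it is an odd derivation of $\{,\}$.

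For closure, I would note that $d$ is a linear vector field and hence preserves the polynomial degree in $V^*$ and does not touch $h$, so it is of weight zero; and $h\Delta$ raises the $h$-power by one while lowering the polynomial degree by two, so it is also of weight zero. Hence $d+h\Delta$ sends the weight $>2$ subspace $\h[V]$ to itself. A parallel observation (the bracket adds weights and subtracts $2$) is what makes $\h[V]$ itself closed under $\{,\}$, which is where the convention of weight $>2$, rather than $\geq 2$, is needed.

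For the differential property, expand $(d+h\Delta)^2=d^2+h(d\Delta+\Delta d)+h^2\Delta^2$. The first term vanishes since $d$ is a differential, and $\Delta^2=0$ is the standard BV identity for the odd Laplacian attached to an odd symplectic structure. The cross term $d\Delta+\Delta d$ vanishes because $d$ is induced by a linear endomorphism of $V$ that is compatible with the symplectic pairing: since $\Delta$ is defined intrinsically from this pairing (or, in coordinates, $\Delta=\sum\partial_{u_i}\partial_{u_i^*}$ for a dual pair of Darboux bases), any symplectic linear map commutes with $\Delta$, and in particular the odd operators $d$ and $\Delta$ anticommute. This is essentially the same trick used for Lemma~\ref{tracezero}, where one exploits the tracelessness/compatibility of $d$ to kill the cross term.

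For the derivation property, I would invoke two standard facts: first, that $d$, arising from a symplectic linear vector field, acts on the Poisson algebra $\hat{S}V^*[[h]]$ as an odd derivation of $\{,\}$; second, that the odd Laplacian $\Delta$ is itself an odd derivation of the antibracket (the classical BV identity expressing compatibility of $\Delta$ with $\{,\}$, which follows from $\Delta$ being a second-order operator on $(\hat{S}V^*[[h]],\cdot)$ whose failure to be a derivation of the product is exactly $\pm\{,\}$). Since $h$ is central, adding $h\Delta$ to $d$ preserves the odd derivation property.

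The main bookkeeping obstacle is sign management: odd Jacobi, odd Leibniz and odd-derivation conventions must be reconciled consistently, but no new input is needed beyond the compatibility of $d$ with the pairing (giving $[d,\Delta]=0$) and the standard BV identities $\Delta^2=0$ and $\Delta\{f,g\}=\{\Delta f,g\}\pm\{f,\Delta g\}$ that are already implicit in the earlier discussion of odd Poisson structures.
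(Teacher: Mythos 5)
Your proposal is correct, and its overall skeleton — expand $(d+h\Delta)^2$ and kill the three terms — is the same as the paper's. The difference lies in the key step, the cross term: the paper writes $\Delta(g)=\frac{1}{2}\nabla(X_g)$ and reduces $d\Delta+\Delta d=0$ to the divergence computation already performed in the proof of Lemma~\ref{tracezero}, i.e.~to the identity $\nabla[\xi,\eta]=\xi\nabla\eta-(-1)^{|\xi||\eta|}\eta\nabla\xi$ together with the tracelessness of the odd linear map $d$ (so that $\nabla(d)=0$). You instead invoke naturality of $\Delta$ under symplectic transformations. That mechanism is sound — $\Delta$ is contraction with the constant Poisson bivector inverse to $\omega$, and $L_d\omega=0$ forces $[L_d,\Delta]=0$ in the graded sense — but your phrasing needs one repair: $d$ is an \emph{odd} endomorphism, not a symplectic linear map, so one cannot literally deduce the anticommutation from invariance of $\Delta$ under symplectomorphisms; what is needed is the infinitesimal statement that any \emph{linear vector field} $\eta$ (of either parity) with $L_\eta\omega=0$ graded-commutes with $\Delta$, applied to $\eta=d$. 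What your route buys is independence from the divergence calculus and from tracelessness of $d$; what the paper's route buys is economy, recycling a computation it already has. You also verify two points the paper leaves implicit as standard: weight preservation (so that $d+h\Delta$ maps $\h[V]$ to itself) and the derivation property via the BV identity $\Delta\{f,g\}=\{\Delta f,g\}\pm\{f,\Delta g\}$; including these is a genuine improvement in completeness. One small slip in a side remark: closure of $\h[V]$ under $\{,\}$ would hold equally with the convention weight $\geq 2$ (weights add and drop by $2$); the strict inequality $>2$ is what guarantees pronilpotency of $\h[V]$, not closure under the bracket.
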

\begin{proof}
The operators $d$ and $\Delta$ separately square to zero. It suffices to show that $d\Delta+\Delta d=0$; expressing $\Delta$ in terms of the divergence of the corresponding Hamiltonian vector field, we reduce the calculation to the one performed in the proof of Lemma \ref{tracezero}.
\end{proof}
A quantum $L_\infty$ structure on the \emph{parity reversion} of $V$ is an MC element in $\h[\Pi V]$; equivalently such a structure on $V$ is an MC element in $\h[V]$. In more detail:
\begin{defi}
Let $(V,d)$ be a dg vector space with an odd non-degenerate symmetric bilinear form. An even element $S(h)= S_0 + hS_1 + h^2S_2 + \dots \in \h[\Pi V]$ satisfying the MC equation
\begin{equation}\label{master}
(d+h\Delta )S(h)+\frac{1}{2}\{S(h),S(h)\}=0
\end{equation}
is called the structure of a \emph{quantum $L_\infty$~algebra} on $V$.
\end{defi}

\begin{rem}
The MC equation \eqref{master} is known under the name `quantum master equation' (QME). The corresponding algebraic structure, which we call `quantum $L_\infty$~algebra' appeared in the work of Zwiebach \cite{Zwi} on closed string field theory. It was further studied by Markl under the name `loop homotopy algebras', \cite{Mar}. The restriction on the weight, defining the dgla $\h[V]$ is a manifestation of the so-called \emph{stability condition} for modular operads, cf.~\cite{GK}. This allows one to interpret quantum $L_\infty$~algebras as algebras over the Feynman transform of the modular closure of the cyclic operad $\Com$ governing commutative algebras. We will not use this interpretation here.
\end{rem}

Let $S(h)=S_0+hS_1+h^2S_2+\dots$ be a solution of the QME \eqref{master} in $\h[\Pi V]$. The constant term in $h$ of the QME gives $dS_0+\frac{1}{2}\{S_0,S_0\}=0$. Therefore $X_{S_0}$ is an odd cyclic $L_\infty$~algebra structure on $V$; this is the genus zero part of the corresponding quantum $L_\infty$~algebra. Furthermore, the $h$--linear term in the QME gives \[dS_1+\Delta(S_0)+\{S_0,S_1\}=0.\] Since $\Delta(S_0)=\frac{1}{2}\nabla(X_{S_0})$, the pair $(X_{S_0},S_1)$ determines a unimodular $L_\infty$~structure on $V$.

Next, given an odd cyclic $L_\infty$~algebra structure $S_0$ on $V$ one can ask whether it is the genus zero part of a quantum $L_\infty$~algebra. Since
\begin{equation}\label{MCodd}dS_0+\frac{1}{2}\{S_0,S_0\}=0\end{equation}
the operator $?\mapsto d(?)+\{S_0,?\}$ has square zero on $\hat{S}\Pi V^* $. Moreover, the dg vector space $\hat{S}\Pi V^*$ supplied with this operator is precisely $\CE(V)$, the Chevalley--Eilenberg complex of $V$ regarded as an $L_\infty$~algebra.

Applying $\Delta$ to \eqref{MCodd} and using commutativity of $d$ and $\Delta$ we obtain
\[
d\Delta S_0+\{S_0,\Delta{S_0}\}=0.
\]
Thus, $\Delta(S_0)$ is a cocycle in $\CE(V)$; it is cohomologous to zero precisely when there exists $S_1\in \hat{S}\Pi V^*$ such that $dS_1+\Delta(S_0)+\{S_0,S_1\}=0$. All told, we obtain the following result.

\begin{prop}\label{obstr}
Let $S(h)$ be a quantum $L_\infty$~algebra structure on $V$. Then $X_{S_0}$ is an odd cyclic $L_\infty$~structure on $V$ and the pair $( X_{S_0},S_1 )$ is a unimodular $L_\infty$~structure on $V$. Moreover, an odd cyclic $L_\infty$~algebra structure on $V$ lifts to order one to a quantum $L_\infty$~algebra if and only if it is unimodular.\qed
\end{prop}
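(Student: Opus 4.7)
The plan is to handle parts (1) and (2) by expanding the QME \eqref{master} in powers of $h$ and reading off the appropriate coefficients, and then to reduce the ``moreover'' clause to a vanishing condition in $\HCE(V)$ that has already been identified as the unimodular obstruction. Writing $S(h)=S_0+hS_1+O(h^2)$ and extracting the coefficient of $h^0$ gives $dS_0+\frac{1}{2}\{S_0,S_0\}=0$; since $S_0\in\h[\Pi V]$ has weight $>2$ (and so no constant, linear or quadratic terms), the discussion in Section~\ref{sec:linfty} tells us this is precisely the condition for the Hamiltonian vector field $X_{S_0}$ to be an odd cyclic $L_\infty$~structure on $V$. Extracting the coefficient of $h^1$ gives $dS_1+\Delta(S_0)+\{S_0,S_1\}=0$, and combining this with the identity $\Delta(S_0)=\frac{1}{2}\nabla(X_{S_0})$ recovers exactly the defining equation from Section~4 for the pair $(X_{S_0},S_1)$ to be a unimodular $L_\infty$~algebra.

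For the iff, I interpret ``lifts to order one'' as the existence of $S_1$ such that $S_0+hS_1$ satisfies \eqref{master} modulo $h^2$; equivalently, solving
\[
dS_1+\Delta(S_0)+\{S_0,S_1\}=0
\]
for $S_1\in\hat{S}\Pi V^*$. As observed just before the proposition, $d+\{S_0,-\}$ is a square-zero differential on $\hat{S}\Pi V^*$ realising it as $\CE(V)$, and $\Delta(S_0)$ is already a cocycle in this complex. Hence solvability is equivalent to the vanishing of the class $[\Delta(S_0)]=\frac{1}{2}[\nabla(X_{S_0})]\in\HCE(V)$. By the remark following Corollary~\ref{liftstrict}, originally from \cite[Section~7]{Gran}, this class vanishes precisely when $X_{S_0}$ admits a unimodular lift, i.e.\ when $X_{S_0}$ is unimodular in the sense of Section~4.

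The one substantive ingredient is the identity $\Delta(S_0)=\frac{1}{2}\nabla(X_{S_0})$, which is a direct local computation in Darboux coordinates exactly parallel to Proposition~\ref{divlap}. Everything else is bookkeeping: expanding the QME, checking that the terms produced stay inside $\h[\Pi V]$, and quoting the unimodular obstruction. I do not anticipate any real obstacle, since each equivalence in the chain has already been set up in the surrounding paragraphs; the proposition is essentially an assembly of these observations.
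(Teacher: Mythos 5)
Your proposal is correct and follows essentially the same route as the paper: expanding the QME in powers of $h$ to read off the cyclic and unimodular structures, and settling the ``moreover'' clause by observing that $\Delta(S_0)$ is a cocycle in $\CE(V)$ whose vanishing in cohomology is exactly the unimodularity condition via $\Delta(S_0)=\frac{1}{2}\nabla(X_{S_0})$. The only cosmetic difference is that you present this last identity as a Darboux-coordinate computation parallel to Proposition~\ref{divlap}, whereas in the paper it is immediate from the very definition of the odd Laplacian $\Delta(g)=\frac{1}{2}\nabla(X_g)$.
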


\begin{prop}
Let $m,m'$ be two odd cyclic $L_\infty$~structures on $V$ which are cyclic $L_\infty$~isomorphic. Then the sets of quantum lifts of $m$ and $m'$ are isomorphic.
\end{prop}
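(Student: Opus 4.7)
The plan is to read the statement off from the functoriality of Maurer--Cartan sets along a natural surjective odd dgla morphism. The ideal $(h)\subset\h[\Pi V]$ generated by the formal parameter $h$ is stable under both $d+h\Delta$ and $\{,\}$, so the canonical projection $\pi\co\h[\Pi V]\to\h[\Pi V]/(h)$ is an odd dgla morphism; its target is naturally identified with the pronilpotent odd dgla $(\hat{S}_{\geq 3}\Pi V^*,d,\{,\})$ whose MC elements are exactly the odd cyclic $L_\infty$~structures on $V$. Under this identification a quantum lift of $m$ is precisely an MC element of $\h[\Pi V]$ sent by $\pi$ to the Hamiltonian $S_0^m$ of $m$, and two such lifts are equivalent iff they are gauge equivalent in $\h[\Pi V]$.

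My first step would be to reduce to the pointed case. A cyclic $L_\infty$~isomorphism $F\co(V,m)\to(V,m')$ factors into its reductive (linear) and pronilpotent (pointed) parts. The linear factor is a symplectic chain isomorphism, and the corresponding pullback on $\hat{S}\Pi V^*[[h]]$ preserves $d$, the bracket $\{,\}$, and the BV Laplacian $\Delta$ (the last because $\Delta$ is intrinsic to the linear symplectic structure on $\Pi V$), hence preserves the QME and the equivalence relation on quantum lifts; this furnishes a bijection of quantum lifts along the linear part. After this reduction, we may assume $m$ and $m'$ are pointed cyclic $L_\infty$~isomorphic, which by Remark~\ref{rem:linftygaugeiso} together with the Hamiltonian correspondence means that there is an odd $\xi_0\in\hat{S}_{\geq 3}\Pi V^*$ with $e^{\xi_0}\cdot S_0^m=S_0^{m'}$ in $\h[\Pi V]/(h)$.

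Tautologically lifting $\xi_0$ to the genus zero subspace of $\h[\Pi V]$, I would then consider the gauge action $S(h)\mapsto e^{\xi_0}\cdot S(h)$ in the pronilpotent odd dgla $\h[\Pi V]$. Since $\pi$ is a dgla morphism it intertwines the two gauge actions, giving
\[
\pi\bigl(e^{\xi_0}\cdot S(h)\bigr)=e^{\pi(\xi_0)}\cdot\pi(S(h))=e^{\xi_0}\cdot S_0^m=S_0^{m'},
\]
so $e^{\xi_0}\cdot S(h)$ is a quantum lift of $m'$ whenever $S(h)$ is one of $m$. The inverse is the gauge action of $-\xi_0$, and the bijection descends to equivalence classes because the gauge action on $\h[\Pi V]$ respects gauge equivalence. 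The substance of the argument is entirely structural, being essentially a special instance of the abstract fibration principle (Theorem~\ref{thm:fibration}); the only point requiring a separate treatment, and so the only real obstacle, is the reductive part of $\Aut(\hat{S}\Pi V^*)$, handled above by direct pullback since gauge equivalence only controls the pronilpotent part.
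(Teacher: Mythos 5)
Your proposal is correct, and its engine is the same as the paper's: translate the cyclic isomorphism into gauge equivalence of MC elements (Remark~\ref{rem:linftygaugeiso}) and then transport fibres along the surjective odd dgla morphism $\pi\co\h[\Pi V]\twoheadrightarrow\h[\Pi V]/(h)\cong\hat{S}_{\geq 3}\Pi V^*$; your explicit computation with the gauge action of $e^{\xi_0}$ is precisely an unwinding, in this special case, of Proposition~\ref{prop:gaugeequivoffibres}, which is what the paper cites instead of spelling out. The genuine difference is your treatment of the reductive part. The paper's one-line proof asserts that being cyclic $L_\infty$~isomorphic is \emph{equivalent} to being gauge equivalent in $\Der_{\geq 2}^{\cyc}(\hat{S}\Pi V^*)$, but by Remark~\ref{rem:linftygaugeiso} gauge equivalence only captures \emph{pointed} cyclic $L_\infty$~isomorphisms; an isomorphism with nontrivial linear component falls outside the MC formalism (as the third bullet of that remark itself warns). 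Your factorization of the isomorphism into a linear symplectic chain isomorphism $F_1$ and a pointed part, together with the observation that pullback by $F_1$ commutes with $d$, $\{,\}$ and $\Delta$ and hence is an automorphism of the odd dgla $\h[\Pi V]$ carrying the fibre over $S_0^m$ onto the fibre over the transported Hamiltonian, closes this gap; so your argument proves the statement as literally written, while the paper's proves it under the reading ``isomorphic $=$ equivalent $=$ pointed isomorphic''. One caveat of justification rather than substance: $\Delta$ is not intrinsic to the symplectic structure alone --- it also uses the translation-invariant Berezin volume up to scale, which is exactly why general formal symplectomorphisms need not commute with $\Delta$; for your purposes this is harmless, since a \emph{linear} change of coordinates rescales the coordinate volume form by a constant, and the divergence, hence $\Delta$, is insensitive to constant rescalings.
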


\begin{proof}
Saying that $m,m'$ are cyclic $L_\infty$~isomorphic is equivalent to saying that the $m,m'$ are gauge equivalent MC elements in the dgla of cyclic derivations $\Der_{\geq 2}^\cyc(\hat{S}V^*)$, cf.~Remark~\ref{rem:linftygaugeiso}. But now the result follows by Proposition~\ref{prop:gaugeequivoffibres}.
\end{proof}

\begin{rem}
Since a quantum $L_\infty$~structure $S=S_0+hS_1+\dots$ is an MC element in a certain dgla there is a corresponding notion of equivalence, or infinity isomorphism of two quantum $L_\infty$~structures defined on the same space $V$. Note that even though the $L_\infty$~algebra $(V,X_{S_0})$ is equivalent to a \emph{strict} unimodular $L_\infty$~structure, it does not imply that any quantum $L_\infty$~structure is equivalent to one having unimodular genus zero part since the notion of equivalence is different.
\end{rem}

In general, it is not so easy to decide whether an odd cyclic $L_\infty$~algebra $m=X_{S_0}$ can be lifted to a quantum $L_\infty$~algebra. The first obstruction to such a lift is provided, according to Proposition~\ref{obstr}, by the cohomology class $\Delta(S_0)$. If $S_0$ is harmonic, i.e.~$\Delta(S_0)=0$ on the nose (as opposed to up to a coboundary) then a constant lift $S=S_0$ is possible. We can give a complete answer in the case of the odd double of an $L_\infty$~algebra.
\begin{theorem}\label{quantunim}
Let $(V,m)$ be an $L_\infty$~algebra structure on $V$ and $(V\oplus\Pi V^*,X_{\Dod(\xi)})$ be its odd double. Then the following statements are equivalent.
\begin{enumerate}
\item The $L_\infty$~structure $(V,m)$ admits a unimodular lift.
\item The odd double $(V\oplus\Pi V^*,X_{\Dod(m)})$ of $V$ admits a unimodular lift.
\item The odd double $(V\oplus\Pi V^*,X_{\Dod(m)})$ of $V$ admits a quantum lift.
\end{enumerate}
\end{theorem}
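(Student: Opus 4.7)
The plan is to prove the chain of implications $(3) \Rightarrow (2) \Rightarrow (1) \Rightarrow (3)$. The implication $(3) \Rightarrow (2)$ is immediate from Proposition~\ref{obstr}: if $S(h) = \Dod(m) + hS_1 + \cdots$ is a quantum lift, then $(X_{\Dod(m)}, S_1)$ is by construction a unimodular lift of the odd double.

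For $(1) \Rightarrow (3)$ I shall construct an explicit quantum lift from a given unimodular lift $(m,f)$ of $V$. The candidate is
\[
S(h) := \Dod(m) + 2h\tilde{f},
\]
where $\tilde{f}$ is $f$ regarded as an element of the CE complex of the odd double via the natural inclusion $\hat{S}\Pi V^* \hookrightarrow \hat{S}\Pi(V^*\oplus \Pi V)$ corresponding to one of the Lagrangians. The order $h^0$ QME holds because $\Dod$ sends MC elements to MC elements by the (unlabelled) theorem on the doubling maps in Section~3. The order $h^1$ equation reduces to the unimodular relation $(d+m)f + \tfrac{1}{2}\nabla(m) = 0$ after using Proposition~\ref{divlap} to identify $\Delta(\Dod(m)) = \nabla(m)$ together with the observation that the Hamiltonian vector field $X_{\Dod(m)}$ restricted to the subalgebra $\hat{S}\Pi V^*$ acts simply as $m$. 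The order $h^2$ equation is $\Delta(2\tilde{f}) + \tfrac{1}{2}\{2\tilde{f},2\tilde{f}\} = 0$, which is automatic because $\tilde{f}$ lies in a Lagrangian subalgebra (so both the Laplacian and the Poisson self-bracket vanish on it). All higher orders vanish trivially since $S_k = 0$ for $k \geq 2$.

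The delicate step is $(2) \Rightarrow (1)$, for which I shall use the obstruction-theoretic criterion of the remark following Corollary~\ref{liftstrict}: (1) is equivalent to the vanishing of $[\nabla(m)] \in \HCE(V)$, and (2) to the vanishing of $[\Delta(\Dod(m))] \in \HCE(V^*\oplus\Pi V)$. Since $\Delta(\Dod(m)) = \nabla(m) \in \hat{S}\Pi V^*$ by Proposition~\ref{divlap}, it suffices to prove that the inclusion of complexes $\CE(V) = \hat{S}\Pi V^* \hookrightarrow \hat{S}\Pi(V^*\oplus \Pi V) = \CE(V^*\oplus\Pi V)$ is split injective on cohomology. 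This will follow from the fact that $\Dod(m)$ is \emph{linear in the $V$-coordinates} (as $\Dod(f\partial_v) = \pm f\cdot \Pi v$): a direct computation with the odd Poisson bracket then shows that $X_{\Dod(m)}$ preserves the decomposition
\[
\CE(V^*\oplus\Pi V) = \bigoplus_{k \geq 0} \hat{S}\Pi V^* \otimes S^k V
\]
by the $V$-degree, and the $k = 0$ summand is precisely $\CE(V)$ with its usual differential $d + m$. Hence $\HCE(V)$ is a direct summand of $\HCE(V^*\oplus \Pi V)$, and vanishing of $[\nabla(m)]$ in the latter forces the same in the former.

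The main obstacle I anticipate is careful sign and parity bookkeeping in the odd Poisson bracket computations, both in verifying the QME at orders $h^1$ and $h^2$ for $(1) \Rightarrow (3)$ and in checking that $X_{\Dod(m)}$ really preserves the $V$-degree grading in $(2) \Rightarrow (1)$. Conceptually, however, the whole argument reduces to two facts from Section~3: the identity $\Delta \circ \Dod = \nabla$ of Proposition~\ref{divlap}, and the linearity of $\Dod(m)$ in one set of Lagrangian coordinates.
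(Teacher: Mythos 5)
Your proposal is correct, and it rests on precisely the two facts that the paper's own proof rests on: Proposition~\ref{divlap} (the identity $\Delta(\Dod(m))=\nabla(m)$) and the linearity of $\Dod(m)$ in the $V$--coordinates, which makes $d+X_{\Dod(m)}$ preserve the $V$--degree decomposition of $\hat{S}(\Pi V^*\oplus V)$. What differs is the logical arrangement, and the difference is worth noting. The paper proves $(1)\Rightarrow(2)$ by strictification: it invokes Corollary~\ref{liftstrict} to replace $m$ by an equivalent strictly unimodular $m'$, observes via Proposition~\ref{divlap} that $\Dod(m')$ is then strictly unimodular, and uses that the odd dgla map $\Dod$ carries equivalences to equivalences; it then extracts both $(2)\Rightarrow(1)$ and $(2)\Rightarrow(3)$ from a single computation, projecting a unimodular lift $f$ of the double onto its $V$--degree zero part $f_0$, so that $(m,f_0/2)$ lifts $m$ and $\Dod(m)+hf_0$ is a quantum lift (since $\Delta f_0=0$ and $\{f_0,f_0\}=0$ on the Lagrangian). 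Your cycle $(3)\Rightarrow(2)\Rightarrow(1)\Rightarrow(3)$ replaces the strictification step by the explicit formula $S(h)=\Dod(m)+2h\tilde{f}$, which is exactly the paper's lift $\Dod(m)+hf_0$ with $f_0=2\tilde{f}$, but built directly from a unimodular lift of $V$ itself; this makes the theorem independent of Corollary~\ref{liftstrict} (hence of Theorem~\ref{strict} and Proposition~\ref{prop:gaugeequivoffibres}), a genuine economy. Conversely, your $(2)\Rightarrow(1)$ is the paper's degree-zero projection read cohomologically: instead of projecting the lifting function, you project the coboundary witnessing $[\nabla(m)]=0$ in the CE cohomology of the double, using that $\CE(V)$ is a chain-map retract of the CE complex of the double; the two arguments are the same computation at the level of classes versus cochains. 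Two minor points: the decomposition should be a completed direct sum $\prod_{k\geq 0}\hat{S}\Pi V^*\otimes S^kV$ rather than $\bigoplus$ (harmless, since projection onto a fixed $V$--degree is still a chain map); and your appeal to the doubling theorem for the order-$h^0$ QME conceals the same sign bookkeeping that the paper conceals when it asserts MC functoriality for the odd map $\Dod$, so you are at the paper's level of rigor there, not below it.
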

\begin{proof}
Suppose that $(V,m)$ admits a unimodular lift. By Corollary \ref{liftstrict} it is equivalent to a strictly unimodular $L_\infty$~algebra $(V,m')$. Since $\Dod$ is an odd dgla map the double $(V\oplus\Pi V^*,X_{\Dod(m')})$ (which is strictly unimodular by Proposition~\ref{divlap}) is equivalent to $(V\oplus\Pi V^*,X_{\Dod(m)})$ and so the latter also admits a unimodular lift. This proves the implication $(1)\Rightarrow(2)$.

Suppose that the $L_\infty$~algebra $X_{\Dod(m)}$ admits a unimodular lift, so that
\begin{equation}\label{unimoddouble}
df+\frac{1}{2}\nabla(X_{\Dod(m)})+X_{\Dod(m)}(f)=0
\end{equation}
for some $f\in \hat{S}(\Pi V^*\oplus V)$. Note that $\Dod(m)\in \hat{S}\Pi V^*\otimes V \subset \hat{S}(\Pi V^*\oplus V)$. Therefore, writing $f=f_0+f_1+\dots$ where $f_i\in \hat{S}\Pi V^*\otimes V^{\otimes i}$ we see that
\[
df_i+X_{\Dod(m)}(f_i) = df_i + \{\Dod(m),f_i\}\in \hat{S}\Pi V^*\otimes V^{\otimes i}\subset\hat{S}(\Pi V^*\oplus V).
\]
Since $\nabla(X_{\Dod(m)})\in \hat{S}\Pi V^*\subset \hat{S}(\Pi V^*\oplus V)$ We conclude from (\ref{unimoddouble}) that
\[
df_0+\frac{1}{2}\nabla(X_{\Dod(m)})+X_{\Dod(m)}(f_0)=0.
\]
It follows from Proposition~\ref{divlap} that the pair $(m, f_0/2)$ is a unimodular $L_\infty$~structure lifting $m$ and so $(2)\Rightarrow(1)$. Furthermore $\Delta f_0 = 0$ and therefore $\Dod(m)+ hf_0$ is a quantum $L_\infty$~structure lifting $\Dod(m)$, proving $(2)\Rightarrow (3)$.

To complete the proof it remains to show that $(3)\Rightarrow (2)$, but this is immediate from Proposition~\ref{obstr}.
\end{proof}
\begin{rem}
In fact, the set of lifts of a given $L_\infty$~structure $m$ on $V$ to a unimodular $L_\infty$~structure on $V$ \emph{injects} into the set of quantum lifts of the odd $L_\infty$~structure $\Dod(m)$. Of course, the set of quantum lifts is potentially much bigger.
\end{rem}

\section{Unimodularity of tensor products}\label{sec:tensors}
It is known that a tensor product of an $L_\infty$~algebra and a cdga is itself an $L_\infty$~algebra. In this section we investigate the question of when the resulting tensor product admits a unimodular lift (which we know is the same as being equivalent to a strictly unimodular $L_\infty$~algebra). We start by recalling a general construction from \cite{HL}.

Let $V$ be a dg vector space and $A$ be a cdga. We construct a map of dglas
\begin{equation}\label{tensor}
\Psi_A\co\Der_{\geq 2}\hat{S}V^*\to\Der_{\geq 2}\hat{S}(A\otimes V)^*
\end{equation}
as follows. Any derivation $\xi\in \Der\hat{S}V^*$ is determined by its value on $V^*$ which could be any element in $\hat{S}V^*$.
If this element belongs to $\hat{S}^nV^*$ we say that $\xi$ is homogeneous of degree $n$. Without this restriction, therefore, we can write $\xi=\xi_0+\xi_1+\dots$ where $\xi_n$ is the $n$--th homogeneous component of $\xi$. Any homogeneous derivation $\xi$ of degree $n$ can be identified with a map $V^*\to \hat{S}^nV^*$. We have a symmetrization isomorphism $i_n\co\hat{S}^nV^*\to({S}^nV)^*$
(cf.~Section~\ref{sec:linfty}) and dualizing, we obtain a map ${S}^nV\to V$, i.e.~a multi-linear symmetric map $\tilde{\xi}\co S^nV\to V$. We thus have for $v_1,\dots,v_n\in V$:
\[
\tilde{\xi}(v_1,\dots, v_n)=\xi^*\left(\sum_{\sigma\in S_n}\sigma(v_1\otimes\dots\otimes v_n)\right)
\]
\begin{example}
Take $V$ to be the one-dimensional even space $\ground$ so that $\hat{S}V^*\cong\ground[[t]]$ and consider $\xi=t^n\partial_t$ for $n\geq 0$. The map $\tilde{\xi}\co\ground\to \ground$ is multiplication by $n!$.
\end{example}
There is clearly a one-to-one correspondence between such multi-linear maps and homogeneous derivations $\xi\in \Der\hat{S}W^*$ of degree $n$. We define the map $\Psi_A$ on derivations of a fixed homogeneous degree $n>0$ and then extend by linearity. Denote by $X^n\co A^{\otimes n}\to A$ the $n$--th iterated multiplication map, so $X^n(a_1,\dots, a_n)=a_1\cdot\ldots\cdot a_n$.
\begin{defi}
For $\xi\in\Der_{n}\hat{S}V^*$ the derivation $\Psi_A(\xi)$ is the derivation of $\hat{S}(A\otimes V)^*$ which corresponds to the multilinear map
\[
X^n\otimes \tilde{\xi}\co A^{\otimes n}\otimes V^{\otimes n}\to A\otimes V.
\]
\end{defi}
\begin{prop}\label{Liealgmap}
The map (\ref{tensor}) is a map of dglas.
\end{prop}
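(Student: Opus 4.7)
The plan is to verify the two defining properties of a dgla morphism separately: compatibility with the differential, and compatibility with the Lie bracket. Both reduce to the observation that the product maps $X^n \co A^{\otimes n} \to A$ are compatible with the differential (since $A$ is a cdga) and, crucially, associative and commutative, which is what allows the $A$--factors to combine freely when derivations are composed.

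For the first step I would reduce to checking things on derivations $\xi \in \Der_n \hat{S}V^*$ of fixed homogeneous degree. Under the identification of such a $\xi$ with a symmetric multilinear map $\tilde{\xi}\co V^{\otimes n} \to V$, the induced differential corresponds to the standard Hochschild-type differential given by pre- and post-composing with $d_V$. Since $d_{A\otimes V} = d_A\otimes \id + \id \otimes d_V$ and since $X^n$ is a chain map (because $A$ is a cdga), the map $X^n \otimes \tilde\xi$ is a chain map if and only if $\tilde\xi$ is, which immediately gives $\Psi_A(d\xi) = d\Psi_A(\xi)$.

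For the bracket, I would take homogeneous derivations $\xi \in \Der_m\hat{S}V^*$ and $\eta \in \Der_n\hat{S}V^*$ and compute the symmetric multilinear map corresponding to $[\Psi_A(\xi),\Psi_A(\eta)]$ in terms of the multilinear maps $\tilde\xi$ and $\tilde\eta$. The commutator of two derivations of $\hat{S}W^*$ plugs the output of one into an input of the other (with full symmetrization over the remaining inputs). When applied to inputs $a_1 \otimes v_1,\dots,a_{m+n-1}\otimes v_{m+n-1}$, the typical term
\[
(X^m\otimes\tilde\xi)\bigl((X^n\otimes\tilde\eta)(a_1\otimes v_1,\dots,a_n\otimes v_n),\,a_{n+1}\otimes v_{n+1},\dots\bigr)
\]
equals $X^m(X^n(a_1,\dots,a_n),a_{n+1},\dots,a_{m+n-1}) \otimes \tilde\xi\bigl(\tilde\eta(v_1,\dots,v_n),v_{n+1},\dots\bigr)$, and by associativity (and commutativity, for the Koszul signs) of $A$ the first factor collapses to $a_1\cdot\ldots\cdot a_{m+n-1} = X^{m+n-1}(a_1,\dots,a_{m+n-1})$. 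Summing over the symmetric group and subtracting the analogous term with $\xi$ and $\eta$ interchanged, the $A$--part factors out as $X^{m+n-1}$, leaving exactly the multilinear map that represents $[\xi,\eta]$ on the $V$--side. Hence $[\Psi_A(\xi),\Psi_A(\eta)] = \Psi_A([\xi,\eta])$.

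The main obstacle is bookkeeping: keeping track of the Koszul signs arising from moving the $a_i$'s past the $v_j$'s (and past $d_A$, $d_V$), and verifying that the signs produced in the bracket of multilinear maps match those coming from the bracket of derivations. This is purely formal but tedious; commutativity of $A$ is precisely what is needed to make these signs line up, since otherwise the reordering of the $a_i$'s inside $X^{m+n-1}$ would obstruct the identification. Finally, since both sides of $\Psi_A$ are defined on the completion of the space of homogeneous derivations, linearity and continuity in the obvious topology extend the identity from the homogeneous case to all of $\Der_{\geq 2}\hat{S}V^*$.
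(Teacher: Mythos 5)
Your proof is correct, but it is not the argument the paper itself gives: what you have written is essentially the direct computational proof, which the paper explicitly delegates to the reference \cite[Lemma 6.4]{HL}, while the proof in the paper is a deliberately non-computational one. The paper's argument runs as follows: an MC element of $\h(V):=\Der_{\geq 2}\hat{S}V^*$ is an $L_\infty$~structure on $\Pi V$, and an MC element of $\h(A\otimes V)$ is an $L_\infty$~structure on $A\otimes \Pi V$; the tensor-with-$A$ construction gives a map $\MC(\h(V))\to\MC(\h(A\otimes V))$ which extends functorially over any dg base $B$, i.e.~to natural maps $\MC(B\otimes\h(V))\to\MC(B\otimes\h(A\otimes V))$; by \cite[Corollary 2.5]{ChL} such a natural family is the same thing as an $L_\infty$~map $\h(V)\to\h(A\otimes V)$, and one then checks that this map is strict and coincides with $\Psi_A$ --- a strict $L_\infty$~map being exactly a dgla map. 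The trade-off is clear: your route is self-contained and elementary but all the content sits in the Koszul-sign bookkeeping (your identification of where commutativity of $A$ enters --- collapsing $X^m(X^n(a_1,\dots,a_n),a_{n+1},\dots)$ to $X^{m+n-1}$ and making the reordering signs match --- is precisely the crux, and the simultaneous symmetrization over the $a_i$ and $v_i$ factors is where one must be careful); the paper's route avoids every sign by outsourcing the work to the representability of the MC functor, at the cost of invoking the machinery of \cite{ChL}. One small sharpening of your first step: what is needed is not merely that $X^n\otimes\tilde\xi$ is a chain map iff $\tilde\xi$ is, but the identity $[d_{A\otimes V},X^n\otimes\tilde\xi]=X^n\otimes[d_V,\tilde\xi]$, which indeed follows from $X^n$ being a chain map and is what gives $d\Psi_A(\xi)=\Psi_A(d\xi)$ as an equality of derivations, not just a preservation of cycles.
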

\begin{proof}
A detailed direct proof is given in \cite[Lemma 6.4]{HL}. Here we sketch a more conceptual, non-computational proof. Denote by $\h(V)$ the graded Lie algebra $\Der_{\geq 2}\hat{S} V^*$; similarly $\h(A\otimes V)$ will stand for the graded Lie algebra $\Der_{\geq 2}\hat{S}(A^*\otimes V^*)$. We wish to prove that the map $\Psi_A\co\h(V)\to\h(A\otimes V)$ respects the dgla structure.

Note that an MC element in $\h(V)$ is an $L_\infty$~algebra structure on $\Pi V$ whereas an MC element in $\h(A\otimes V)$ is an $L_\infty$~structure on $A\otimes \Pi V$. Let $m^{\Pi V}$ be an $L_\infty$~structure on $\Pi V$; recall that we write $\tilde{m}_n\co V^{\otimes n}\to V$ for the corresponding multi-linear maps. Associated to $m^{\Pi V}V$ is an $L_\infty$~structure $m^{A\otimes \Pi V}$ on $A\otimes \Pi V$ given by a tensor product with the cdga $A$ (see Section~\ref{sec:linfty}).
This gives a map on MC elements $\MC(\h(V))\to\MC(\h(A\otimes V))$. This map can be extended functorially over any dg base; i.e.~for any cdga $B$ there is a natural map
$\MC\left(B\otimes\h(V)\right)\to\MC\left(B\otimes\h(A\otimes V)\right)$. But the existence of such a
map is tantamount to having an $L_\infty$~map $\h(V)\to\h(A\otimes V)$,
see \cite[Corollary 2.5]{ChL} for this type of argument. It is then easy to check that this map is precisely $\Psi_A$ and, in particular, that it is strict.
\end{proof}
A slight modification of the proof also yields a map of dglas $\Der\hat{S}V^*\to\Der\hat{S}(A\otimes V)^*$; we will not use this extended map. An interesting special case is obtained by setting $V=\ground$, the ground field. In that case we obtain a Lie algebra map $\Der\ground[[t]]\mapsto\Der \hat{S}A^*$ (which is easily seen to be injective). This can be phrased as saying that the Lie algebra of formal vector fields on the line has a (nonlinear) action on the underlying space on any commutative and associative algebra $A$. In fact, it is easy to see that in this case $A$ need not even be commutative; formal vector fields could also be replaced by polynomial ones. This observation was also made in \cite[Theorem 1.7.8]{KM}.

The following result expresses the divergence of a formal vector field in terms of traces; it was proved in \cite[Proposition 6.12]{HL}; the proof is just a straightforward unwrapping of definitions.
\begin{lem}\label{trace}
Let $V$ be a finite-dimensional vector space and $\xi\in\Der_n(\hat{S}V^*)$ and $\tilde{\xi}\co V^{\otimes n}\to V$ be the corresponding multi-linear map. Then the multi-linear map $i_{n-1}(\nabla\xi)\co S^{n-1}V\to\ground$ has the form:
\[v_1\otimes\dots\otimes v_{n-1}\mapsto
\Tr\left(x\mapsto\tilde{\xi}(v_1,\dots,v_{n-1},x)\right)
\]\qed
\end{lem}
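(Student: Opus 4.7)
I choose a homogeneous basis $\{x_i\}$ of $V^*$ with dual basis $\{e_i\}$ of $V$; by additivity of both sides in $\xi$, it suffices to treat a single monomial derivation $\xi=f\,\partial_{x_j}$ with $f\in\hat{S}^nV^*$ homogeneous. For such a $\xi$, the map $\xi\co V^*\to\hat{S}^nV^*$ sends $x_k\mapsto\delta_{jk}f$, so unwinding the definition of $\tilde\xi$ through the symmetrisation isomorphism $i_n$ and the standard pairing $\langle,\rangle\co (V^*)^{\otimes n}\otimes V^{\otimes n}\to\ground$ produces the explicit formula
\[
\tilde\xi(v_1,\dots,v_n)=\Bigl\langle f,\ \textstyle\sum_{\sigma\in S_n}\sigma(v_1\otimes\cdots\otimes v_n)\Bigr\rangle\,e_j.
\]

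Since the endomorphism $x\mapsto\tilde\xi(v_1,\dots,v_{n-1},x)$ takes values in the line $\ground\cdot e_j$, its trace is its $e_j$-coefficient at $x=e_j$, giving
\[
\Tr\bigl(x\mapsto\tilde\xi(v_1,\dots,v_{n-1},x)\bigr)=\Bigl\langle f,\ \textstyle\sum_{\sigma\in S_n}\sigma(v_1\otimes\cdots\otimes v_{n-1}\otimes e_j)\Bigr\rangle.
\]
On the other side, $\nabla\xi=(-1)^{|f||x_j|}\partial_{x_j}(f)\in\hat{S}^{n-1}V^*$, and an analogous unwrapping of $i_{n-1}$ gives
\[
i_{n-1}(\nabla\xi)(v_1\otimes\cdots\otimes v_{n-1})=(-1)^{|f||x_j|}\Bigl\langle\partial_{x_j}f,\ \textstyle\sum_{\tau\in S_{n-1}}\tau(v_1\otimes\cdots\otimes v_{n-1})\Bigr\rangle.
\]

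The lemma therefore reduces to the elementary identity that, up to the appropriate Koszul sign, $\partial_{x_j}$ on $\hat{S}V^*$ is adjoint to ``tensor in $e_j$ at the last slot'' under the above pairing; for a monomial $f=x_{i_1}\cdots x_{i_n}$ this is immediate by a bijection matching each permutation $\sigma\in S_n$ with $\sigma(\ell)=n$ (forcing $i_\ell=j$) to the pair $(\ell,\tau)$, where $\tau\in S_{n-1}$ is the induced bijection of the remaining factors. The whole calculation is mechanical; the only delicate point is that the graded Koszul signs coming from $(-1)^{|f||x_j|}$, from the symmetrisations $\sum_\sigma$ of homogeneous tensors, and from moving $e_j$ through the $v_i$ in the pairing should all cancel to yield the claimed identity with no residual sign, which is where care is required in the write-up.
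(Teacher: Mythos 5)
Your proposal is correct and is essentially the paper's own argument: the paper defers the proof to \cite[Proposition 6.12]{HL}, describing it as ``a straightforward unwrapping of definitions'', and your reduction to monomial derivations $f\partial_{x_j}$, the identification of the trace with the $e_j$-coefficient, and the bijection $\sigma\leftrightarrow(\ell,\tau)$ matching the two symmetrized pairings is exactly that unwrapping. The only deferred point, as you acknowledge, is the Koszul sign bookkeeping (note that $\Tr$ here is a supertrace, so the passage from the $e_j$-coefficient to the trace contributes a sign $(-1)^{|e_j|}$ which must be absorbed along with the others), and this does indeed cancel.
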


The dgla map $\Psi_A$ can be extended to a dgla map $\g[V]\to\g[A\otimes V]$ where $V$ is a dg vector space and $A$ is a cdga. Consider the map
\[\hat{S}_{\geq 1}V^* \cong\Hom(S_{\geq 1}V,\ground)\to\hat{S}_{\geq 1}(A\otimes V)^*\]
which associates to a symmetric multilinear map $f\co V^{\otimes n}\to \ground$ the symmetric multilinear map $\tilde{f} (A\otimes V)^{\otimes n}\to \ground$ such that
\[
\tilde{f}(a_1\otimes v_1,\dots,a_n\otimes v_n)=\pm\Tr(a_1\dots a_n)f(v_1,\dots,v_n)
\]
for $a_i\in A, v_i\in V,i=1,\dots,n$; the sign $\pm$ is determined by the Koszul sign rule.

The map $f\mapsto \tilde{f}$ induces a map $\Pi\hat{S}_{\geq 1}V^*\to\Pi\hat{S}_{\geq 1}(A\otimes V)^*$ which we will denote by $\Psi^\prime_A$. Then we have the following result.
\begin{prop}\label{psi_Aext}
The map
\[(\Psi_A,\Psi_A^\prime)\co\Der_{\geq 2}\hat{S}V^*\ltimes\Pi\hat{S}_{\geq 1}V^*\to \Der_{\geq 2}\hat{S}(A\otimes V)^*\ltimes\Pi\hat{S}_{\geq 1}(A\otimes V)^*\]
is a map of dglas $\g[V]\to\g[A\otimes V]$.
\end{prop}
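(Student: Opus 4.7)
The plan is to build on Proposition~\ref{Liealgmap}, which already says that the first component $\Psi_A\co\Der_{\geq 2}\hat{S}V^*\to\Der_{\geq 2}\hat{S}(A\otimes V)^*$ is a map of dglas. What remains is to verify three compatibilities that arise from extending $\Psi_A$ by $\Psi_A^\prime$: that $\Psi_A^\prime$ intertwines the internal differential $d$; that the semidirect bracket $[\xi,\Pi f]=\Pi\xi(f)$ is respected, i.e.\ $\Psi_A(\xi)\bigl(\Psi_A^\prime(\Pi f)\bigr)=\Psi_A^\prime(\Pi\xi(f))$; and that $\Psi_A$ intertwines the external differential $d_e(\xi)=\tfrac12\Pi\nabla(\xi)$, i.e.\
\[
\Psi_A^\prime\!\left(\tfrac12\Pi\nabla(\xi)\right)=\tfrac12\Pi\nabla(\Psi_A(\xi)).
\]
The first two are routine once phrased in terms of multilinear maps.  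For the first, the defining formula $\tilde{f}(a_1\otimes v_1,\dots,a_n\otimes v_n)=\pm\Tr(a_1\cdots a_n)f(v_1,\dots,v_n)$ factors through the iterated product $X^n$ and the fixed trace on $A$, both of which are dg; so $\Psi_A^\prime$ is a chain map.  For the second, one unwinds both sides on $(a_1\otimes v_1,\dots,a_{n+k-1}\otimes v_{n+k-1})$; the $V$--parts coincide by definition of the action of $\xi$ on $g$, while the $A$--parts agree by associativity and commutativity of the product in $A$ combined with the cyclicity of the trace, and the Koszul signs on both sides are determined by the same rule.

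The main content, and the potentially delicate step, is the compatibility with $d_e$.  Here the key tool is Lemma~\ref{trace}: if $\xi\in\Der_n\hat{S}V^*$ corresponds to $\tilde{\xi}\co V^{\otimes n}\to V$, then $i_{n-1}(\nabla\xi)$ is the multilinear form $(v_1,\dots,v_{n-1})\mapsto\Tr_V(x\mapsto\tilde{\xi}(v_1,\dots,v_{n-1},x))$.  Applied to $\Psi_A(\xi)$, whose associated multilinear map is $X^n\otimes\tilde{\xi}$, one obtains
\[
i_{n-1}\bigl(\nabla\Psi_A(\xi)\bigr)(a_1\otimes v_1,\dots,a_{n-1}\otimes v_{n-1})
=\pm\Tr_{A\otimes V}\!\left((a\otimes x)\mapsto (a_1\cdots a_{n-1}a)\otimes\tilde{\xi}(v_1,\dots,v_{n-1},x)\right).
\]
The operator inside is a tensor product of the left multiplication $L_{a_1\cdots a_{n-1}}$ on $A$ and the partial evaluation $x\mapsto\tilde{\xi}(v_1,\dots,v_{n-1},x)$ on $V$, so its trace factors as $\pm\Tr(a_1\cdots a_{n-1})\cdot i_{n-1}(\nabla\xi)(v_1,\dots,v_{n-1})$.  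This is exactly the multilinear form defining $\Psi_A^\prime(\Pi\nabla\xi)$, up to the same universal Koszul sign, and the identity follows.

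The main obstacle I anticipate is the sign bookkeeping, both in the trace factorisation and in matching the $\pm$ appearing in the definition of $\Psi_A^\prime$ with those produced by the semidirect product structure.  One could alternatively argue more conceptually, in the spirit of the proof of Proposition~\ref{Liealgmap}: MC elements of $\g[V]$ are unimodular $L_\infty$~structures on $\Pi V$, and the construction sending such a structure to its tensor product with $A$ is natural in any dg base, which forces the induced map on $\g[V]$ to be an $L_\infty$~map (necessarily strict, by a degree count).  Nonetheless, even in this route one still has to verify that the tensor-product construction produces a \emph{unimodular} structure from a unimodular one, which amounts precisely to the trace factorisation described above.
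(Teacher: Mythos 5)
Your proposal is correct and takes essentially the same route as the paper's own proof: the paper likewise treats the derivation component via Proposition~\ref{Liealgmap}, verifies the bracket compatibility by inspecting the multilinear formula for $\{\xi,f\}$, and derives compatibility with the external differential from the divergence formula of Lemma~\ref{trace}. Your trace-factorisation argument (trace of a tensor product of operators equals the product of traces) is precisely the detail the paper compresses into ``straightforward inspection.''
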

\begin{proof}
Let $\xi\in \Der_n(\hat{S}V^*)$ and let $\tilde{\xi}\co V^{\otimes n}\rightarrow V$,be the corresponding multi-linear map. Given a symmetric map $f\co V^{\otimes m}\to \ground$ then the symmetric map $\{\xi, f\}\co V^{\otimes n+m-1}\to \ground$ is given by the formula
\[
\{ \xi, f \}(v_1,\dots, v_{n+m-1}) = \sum_{i=1}^{m} \pm f(v_1,\dots, v_{i-1},\tilde{\xi}(v_i,\dots, v_{i+n}),\dots, v_{n+m-1})
\]
where $\pm$ is determined by the Koszul sign rule. By straightforward inspection using this formula and the formula for the map $(\Psi_A, \Psi_A')$, the Lie bracket is preserved. Similarly, commutation with the differential follows from the formula for the divergence in Lemma~\ref{trace}.
\end{proof}
We can now give a criterion for a tensor product of an $L_\infty$~algebra and a cdga to be a (strictly) unimodular $L_\infty$~algebra. First, a relevant definition.
\begin{defi}\label{def:commutativeunimodular}
A finite dimensional cdga $A$ is \emph{unimodular} if the operator of multiplication by any element $a\in A$ has zero trace.
\end{defi}
Note for future use that the map $\Psi^\prime_A$ is zero if the cdga $A$ is unimodular. Next, there is a simple criterion for a graded commutative algebra to be unimodular.
\begin{prop}
Let $A$ be a finite dimensional graded commutative algebra and $e_i,i=1,\dots, n$ be a complete set of orthogonal idempotents in $A$ so that $A\cong \oplus_{i=1}^ne_iA$. Then $A$ is unimodular if and only if for each $i=1,\dots, n$ the even and odd dimensions of $e_iA$ coincide, i.e.~there exist $l_i\in\mathbb Z$ such that $\dim(e_iA)=l_i|l_i$.
\end{prop}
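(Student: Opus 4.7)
The plan is to reduce the statement to the structure of each summand $e_iA$ separately, using that multiplication by any element of $A$ preserves the decomposition $A\cong\bigoplus_ie_iA$. In the super-commutative setting the trace appearing in Definition~\ref{def:commutativeunimodular} should be understood as the supertrace, in line with the sign conventions in the divergence--trace identification of Lemma~\ref{trace}. In particular, for any odd $a$ the operator $L_a$ of multiplication by $a$ is itself odd and hence automatically supertraceless, so it is enough to verify the vanishing of $\Tr(L_a)$ for even $a\in A$.

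For the forward direction I would specialize the unimodularity condition to $a=e_j$; note that idempotents in a super-commutative $\ground$-algebra are necessarily even, since an odd idempotent would satisfy $e=e^2=-e^2$, forcing $e=0$. The operator $L_{e_j}$ is the identity on $e_jA$ and zero on the remaining summands, so its supertrace equals $\dim_0(e_jA)-\dim_1(e_jA)$; setting this to zero forces $\dim(e_jA)=l_j|l_j$. For the converse I would take the $e_i$ to be primitive, so that each $e_iA$ is a local finite dimensional super-commutative algebra and admits a decomposition $e_iA=\ground\cdot e_i\oplus\mathfrak m_i$ where $\mathfrak m_i$ is the Jacobson radical and is a nilpotent ideal. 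For an even element $a\in A$ write $e_ia=\alpha_ie_i+n_i$ with $\alpha_i\in\ground$ and $n_i\in\mathfrak m_i$ even. Then $L_a|_{e_iA}=\alpha_i\cdot\id_{e_iA}+L_{n_i}$; the operator $L_{n_i}$ is nilpotent, hence traceless on each of the even and odd parts of $e_iA$ and so supertraceless, while the identity contribution is $\alpha_i(l_i-l_i)=0$. Summing over $i$ gives $\Tr(L_a)=0$, so $A$ is unimodular.

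The step requiring the most care is the identification of the trace with the supertrace and the consequent reduction to even elements; once this is in place the argument amounts to tracking the local decomposition of $A$. A minor but genuine subtlety is that the equivalence tacitly requires the $e_i$ to be primitive: for a non-primitive complete orthogonal set one can arrange $\dim(e_iA)=l_i|l_i$ for every $i$ without each primitive block being balanced, giving finite dimensional examples where $A$ fails to be unimodular despite the dimension hypothesis being satisfied.
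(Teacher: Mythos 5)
Your argument follows essentially the same route as the paper's proof: read the trace in Definition~\ref{def:commutativeunimodular} as the supertrace (the paper's own proof confirms this reading, since it computes the trace of multiplication by $1$ on an $l_i|n_i$--dimensional block as $l_i-n_i$), get the forward direction by evaluating on the idempotents themselves, and get the converse by writing each element of a local block as a scalar plus a nilpotent, whose multiplication operators contribute $\alpha_i\cdot\operatorname{sdim}$ and $0$ respectively. Your reduction to even $a$ via the automatic supertracelessness of odd operators is a correct streamlining, and your closing caveat is genuinely right: the converse does require the $e_i$ to be primitive, which the paper's proof tacitly assumes when it asserts that $e_iA$ is local. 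For instance, with the single idempotent $e_1=1$ and $A=\ground[x]/(x^2)\times(\ground\oplus\Pi\ground^3)$ one has $\dim(e_1A)=3|3$, yet the supertrace of multiplication by the idempotent $(1,0)$ equals $2$, so $A$ is not unimodular; thus the hypothesis of the proposition must be read as including primitivity.

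There is, however, one genuine gap in your converse. You claim that primitivity of $e_i$ yields a decomposition $e_iA=\ground\cdot e_i\oplus\mathfrak{m}_i$; this asserts that the residue field of the local algebra $e_iA$ is $\ground$ itself, which is false over a general field of characteristic zero, the paper's standing assumption. For example over $\ground=\mathbb{Q}$ the (purely even) local algebra $e_iA=\mathbb{Q}(\sqrt{2})$ has zero radical but is not $\mathbb{Q}\cdot e_i$, and your expression $e_ia=\alpha_ie_i+n_i$ simply does not exist when $e_ia$ acts as $\sqrt{2}$. The paper deals with this by its very first step, ``without loss of generality $\ground$ is algebraically closed'': unimodularity and superdimensions are insensitive to extension of scalars, and (a point the paper also glosses over) after base change the primitive blocks refining each $e_iA\otimes\bar{\ground}$ are permuted transitively by $\operatorname{Gal}(\bar{\ground}/\ground)$, so they all have equal superdimension and balancedness is inherited. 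Alternatively you can repair your argument directly over $\ground$: by Wedderburn--Malcev the residue field $K_i$ embeds in the even part of $e_iA$, giving $e_ia=\kappa_i+n_i$ with $\kappa_i\in K_i$ and $n_i\in\mathfrak{m}_i$; since $e_iA$ is then a $K_i$--super vector space with $\operatorname{sdim}_{\ground}(e_iA)=[K_i:\ground]\operatorname{sdim}_{K_i}(e_iA)$, balancedness over $\ground$ forces $\operatorname{sdim}_{K_i}(e_iA)=0$, whence the supertrace of $L_{\kappa_i}$, which equals $\Tr_{K_i/\ground}(\kappa_i)\cdot\operatorname{sdim}_{K_i}(e_iA)$, vanishes. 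With either repair your proof is complete and coincides in substance with the paper's.
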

\begin{proof}
Without loss of generality we assume that the ground field $\ground$ is algebraically closed. Clearly $A$ is unimodular if and only if every $e_iA$ is. Next, $e_iA$ is a local algebra with residue field $\ground$ and it is unimodular if and only if the trace of the multiplication by 1 in it is zero. If $\dim e_iA=l_i|n_i$ then this trace is equal to $l_i-n_i$ so it is zero if and only if $l_i=n_i$ as required.
\end{proof}
\begin{rem}\label{rem:cdgastrict}
The reader may feel that Definition~\ref{def:commutativeunimodular} is really a definition of \emph{strict} unimodularity. Indeed, there exists a notion of a unimodular $C_\infty$~algebra and regarding a cdga $A$ as a $C_\infty$~algebra, it can be shown that $A$ lifts to a unimodular $C_\infty$~algebra if and only if $H(A)$ is unimodular in the sense of Definition~\ref{def:commutativeunimodular}, although $A$ itself will not be unless it admits the trivial/zero lift. However, it turns out that there is no difference between unimodularity and strict unimodularity for cdgas. Indeed, assuming without loss of generality that $\ground$ is algebraically closed, we saw that the unimodularity of a cdga $A$ is equivalent to the condition that for any idempotent $e\in A$ we have $\Tr(x\mapsto ex)=0$. Note that $e$ is necessarily a cycle in $A$ and since the map $A\to \ground$ given by $a\mapsto \Tr (x\mapsto ax)$ is a chain map, the trace of the multiplication by $e$ is zero if and only if the trace of the induced map on $H(A)$ is also zero. Therefore $A$ is unimodular in the sense of Definition~\ref{def:commutativeunimodular} if and only if $H(A)$ is.
\end{rem}

\begin{example}\label{frobuni}\
Let $A=H^*(M)$ be the cohomology ring of an orientable manifold $M$. In that case $A$ is local and so it is unimodular if and only if the Euler characteristic of $M$ vanishes: $\chi(M)=0$. This is the case, e.g.~if $\dim M$ is odd. In fact, it is easy to see that any graded unital \emph{odd} Frobenius algebra (i.e.~having an invariant odd inner product) is necessarily unimodular.
\end{example}
\begin{theorem}\label{tenpr}
Let $A$ be a cdga and $V$ be an $L_\infty$~algebra. Then:
\begin{enumerate}
\item The $L_\infty$~algebra $A\otimes V$ is strictly unimodular if and only if either $V$ is strictly unimodular or $A$ is unimodular.
\item The $L_\infty$~algebra $A\otimes V$ admits a unimodular lift if and only if either $V$ admits a unimodular lift or $A$ is unimodular.
\end{enumerate}
\end{theorem}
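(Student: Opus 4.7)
For Part (1), the first step is to compute $\nabla\Psi_A(m)$ in terms of the data on $V$. Decomposing $m=\sum_n m_n$, the derivation $\Psi_A(m_n)$ corresponds to the multilinear map $X^n\otimes\tilde m_n$; applying Lemma~\ref{trace} and using that the super-trace of a tensor product of endomorphisms factorises into a product of super-traces yields (up to Koszul sign) the key identity
\[
\nabla\Psi_A(m)=\Psi'_A(\nabla m).
\]
(This identity can also be derived abstractly by tracking the external differential $d_e$ through the dgla map $(\Psi_A,\Psi'_A)$ of Proposition~\ref{psi_Aext}, since $d_e$ is essentially the divergence on the vector-field factor.) Evaluated on $(a_1\otimes v_1,\dots,a_{n-1}\otimes v_{n-1})$, the $n$-th component is $\pm\tau(a_1\cdots a_{n-1})\,\nabla m_n(v_1,\dots,v_{n-1})$ where $\tau(a):=\Tr L_a$, and components of different homogeneity cannot cancel. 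Setting all but one $a_i$ equal to $1_A$ shows that vanishing is equivalent to the disjunction ``$\nabla m=0$ or $\tau\equiv 0$''---that is, ``$V$ is strictly unimodular or $A$ is unimodular''---proving (1).

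For Part (2), the direction $(\Leftarrow)$ is immediate: a unimodular lift $(m,f)$ of $V$ produces the lift $(\Psi_A(m),\Psi'_A(f))$ of $A\otimes V$ via the dgla map of Proposition~\ref{psi_Aext}, while if $A$ is unimodular then (1) supplies the trivial lift $(\Psi_A(m),0)$. For $(\Rightarrow)$, my plan is to construct a unimodular lift of $V$ directly by pulling back along a carefully chosen $L_\infty$~map $V\to A\otimes V$. Extending scalars to $\bar\ground$ if necessary (the obstruction class $[\nabla m]\in\HCE(V)$ vanishes over $\ground$ iff it vanishes over $\bar\ground$, since each symmetric power of $\Pi V^*$ is finite dimensional), decompose $A\cong\prod_iA_i$ into local factors with residue field $\bar\ground$. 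Non-unimodularity of $A$ forces $\chi(A_i)\neq 0$ for some $i$; the unit of $A_i$, viewed as an element $e\in A$, is then an even idempotent with $\tau(e)=\chi(A_i)\neq 0$.

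Since $e^n=e$, the assignment $v\mapsto e\otimes v$ defines a strict $L_\infty$~map $V\to A\otimes V$. Its pullback $\pi_e\co\CE(A\otimes V)\to\CE(V)$, $\pi_e(g)(v_1,\dots,v_n)=g(e\otimes v_1,\dots,e\otimes v_n)$, is then a cdga map commuting with the full CE differentials; in particular it commutes with $d$ and intertwines $\Psi_A(m)(\cdot)$ with $m(\cdot)$. Plugging the idempotent into the defining formula for $\Psi'_A$ yields the splitting identity $\pi_e\circ\Psi'_A=\tau(e)\cdot\id$, and consequently $\pi_e(\nabla\Psi_A(m))=\tau(e)\nabla m$. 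Applying $\pi_e$ to the unimodular equation $dF+\tfrac12\nabla\Psi_A(m)+\Psi_A(m)(F)=0$ satisfied by the given lift $(\Psi_A(m),F)$ gives
\[
d\pi_e(F)+\tfrac{\tau(e)}{2}\nabla m+m(\pi_e(F))=0,
\]
so that $(m,\pi_e(F)/\tau(e))$ is the desired unimodular lift of $V$. The one step I anticipate as the main obstacle is the primary-decomposition argument producing an idempotent with $\tau(e)\neq 0$; this is precisely where it is convenient to pass to the algebraic closure, and the rest is essentially a clean unwinding of the identity $\nabla\Psi_A(m)=\Psi'_A(\nabla m)$ and the functorial compatibilities packaged in Proposition~\ref{psi_Aext}.
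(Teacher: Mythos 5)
Your proposal is correct and takes essentially the same route as the paper: the identity $\nabla\Psi_A(m)=\Psi'_A(\nabla m)$ from Proposition~\ref{psi_Aext} gives the ``if'' directions, and the idempotent evaluation map $\pi_e$ (the paper's $\phi_e$) together with the formula $\pi_e(\nabla\Psi_A(m))=\Tr(a\mapsto ea)\,\nabla m$ gives the converses. The differences are cosmetic: in (1) you test against arbitrary elements $a$ rather than idempotents, and in (2) you make explicit (via passage to $\bar{\ground}$ and the division by $\tau(e)$ producing the lift $(m,\pi_e(F)/\tau(e))$) the base-field and rescaling steps that the paper compresses into a ``without loss of generality'' and a cohomological dichotomy.
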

\begin{proof}
Let $m\in \Der_{\geq 2}\hat{S}\Pi V^*$ be the given $L_\infty$~structure on $V$. Then by Proposition
\ref{psi_Aext}
we have \[\nabla\Psi_A(m)=\Psi^\prime_A(\nabla m).\] It follows that if $V$ is strictly unimodular, i.e.~if $\nabla m=0$ then $\nabla\Psi_A(m)=0$, i.e.~the $L_\infty$~algebra $A\otimes V$ is strictly unimodular. Taking into account that
$\Psi_A^\prime\co\hat{S}_{\geq 1}\Pi V^*\to\hat{S}_{\geq 1}\Pi (A\otimes V)^*$ is a chain map we conclude, similarly, that if $V$ admits a unimodular lift, i.e.~$\nabla m$ is a coboundary in $\hat{S}\Pi V^*$ then $\nabla\Psi_A(m)$ is a coboundary in $\hat{S}\Pi (A\otimes V)^*$, i.e.~$A\otimes V$ admits a unimodular lift.

Further, if $A$ is unimodular then the map $\Psi_A^\prime$ is zero and so again, the $L_\infty$~algebra $A\otimes V$ is unimodular. This proves the `if' statement of both (1) and (2).

Conversely, assume $A\otimes V$ admits a unimodular lift. For any idempotent $e\in A$, define the map $\phi_e\co \hat{S}\Pi (A\otimes V)^* \rightarrow \hat{S} \Pi V^*$ by:
\[
\phi_e(f)(v_1,\dots, v_n) = f(e\otimes v_1, \dots, e\otimes v_n).
\]
Then $\phi_e$ can be viewed as a strict $L_\infty$~map $V\to A\otimes V$; $v\mapsto e\otimes v$ and thus, it is a chain map with respect to the $\CE$ differentials. Furthermore, the following formula can be easily checked:
\[
\phi_e(\nabla \Psi_A) = \Tr(a\mapsto ea)(\nabla m).
\]
 But since $A\otimes V$ admits a unimodular lift then $\nabla \Psi_A\in \hat{S}\Pi (A\otimes V)^*$ is cohomologous to zero and hence so is $\Tr(a\mapsto ea)(\nabla m)\in \hat{S} \Pi V^*$. Therefore, either $\Tr(a\mapsto ea)=0$ for any idempotent $e\in A$ or $\nabla m$ is cohomologous to zero. In the first case $A$ is unimodular and in the second case $V$ has a unimodular lift.

If $A\otimes V$ is strictly unimodular then a similar argument shows that either $A$ is unimodular or $V$ is strictly unimodular.
\end{proof}
\section{Application to Chern--Simons theory}
Let $M$ be a smooth closed oriented manifold of dimension $n$. Its de~Rham algebra $\Omega^*(M)$ is a cdga with the Poincar\'e duality pairing: given two forms $\omega_1$ and $\omega_2$ their inner product is
\[
[ \omega_1,\omega_2]=\int_M\omega_1\cdot \omega_2.
\]
This pairing is invariant in the sense that the identity $[ \omega_1\omega_2,\omega_3]=[\omega_1,\omega_2\omega_3]$ holds for any three forms $\omega_1,\omega_2$ and $\omega_3$. Note that the pairing $[,]$ is even or odd depending on whether $n$ is even or odd. Of course $\Omega(M)$, being infinite-dimensional, is not a Frobenius algebra, however its cohomology $H^*(M)$ is.

Let $V$ be a \emph{finite-dimensional} cyclic $L_\infty$~algebra. We assume that the inner product $\langle, \rangle$ on $V$ has parity which is \emph{opposite} to that of $[,]$. Thus, if $\dim M$ is odd, $V$ is an even cyclic $L_\infty$~algebra and if $\dim M$ is even, then $V$ is an odd cyclic $L_\infty$~algebra. Consider the tensor product
$\Omega(M)\otimes V$. This is itself an $L_\infty$~algebra (as a tensor product of a cdga and an $L_\infty$~algebra) and it has an odd pairing that is the tensor product of pairings on $\Omega(M)$ and $V$, making it into an odd cyclic $L_\infty$~algebra. We will want to consider when this $L_\infty$~algebra admits a unimodular or quantum lift. However, this only makes sense for finite dimensional $L_\infty$~algebras. Therefore we replace this $L_\infty$~algebra with its \emph{minimal model}, which is a finite dimensional cyclic $L_\infty$~algebra which is homotopy equivalent to the original cyclic $L_\infty$~algebra, so still encodes its homotopy type.

We will also need to briefly consider cyclic $C_\infty$~algebras and their minimal models in this section and we refer to \cite{HL} for the relevant definitions. The theory of minimal models of cyclic $L_\infty$~algebras and cyclic $C_\infty$~algebras which we make use of is treated in detail in Appendix~\ref{app:operads}, in the generality of algebras over operads and cyclic operads. The general notion of homotopy equivalence for algebras over operads and cyclic operads relevant to this section is also introduced in Appendix~\ref{app:operads}.

We will make use of the following fact about $C_\infty$~algebras, which is a rewording of the main theorem of \cite{HL}. It says, essentially, that a minimal cyclic $C_\infty$~algebra is determined up to homotopy equivalence by the equivalence class of the underlying (non-cyclic) $C_\infty$~algebra and the isomorphism class of the underlying Frobenius algebra.

\begin{theorem}[{\cite[Theorem 13.5 (1)]{HL}}]\label{thm:hlthm}
Let $A$ be a unital commutative Frobenius algebra with zero differential. Then two minimal cyclic $C_\infty$~structures on $A$ which lift this Frobenius algebra structure are homotopy equivalent as cyclic $C_\infty$~algebras if and only if they are homotopy equivalent as (non-cyclic) $C_\infty$~algebras.
\end{theorem}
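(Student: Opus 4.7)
The ``only if'' direction is immediate: any cyclic $C_\infty$~quasi-isomorphism is in particular a $C_\infty$~quasi-isomorphism, so one simply forgets the invariance of the pairing. All the content lies in the converse direction, and my plan is to set it up as a Moser/Darboux-type rigidity statement for the formal odd symplectic structure on $\Pi A$ coming from the Frobenius pairing.

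My first step is to translate the data into the language of formal non-commutative symplectic geometry. A minimal cyclic $C_\infty$~structure on $A$ lifting the fixed Frobenius structure corresponds to an odd derivation $m \in \Der_{\geq 2}(\hat T \Pi A^*)$ with prescribed quadratic part (dual to the given multiplication on $A$) which preserves the constant symplectic form $\omega$ on $\Pi A$ dual to the Frobenius pairing. A non-cyclic $C_\infty$~quasi-isomorphism $f\co(A,m_1)\to(A,m_2)$ is a continuous algebra isomorphism $\hat T\Pi A^*\to\hat T\Pi A^*$ intertwining $m_1$ and $m_2$; since both $m_i$ share the same quadratic part, a standard normalisation allows me to assume that the linear component of $f$ is the identity on $\Pi A$. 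The task is then to produce a formal symplectomorphism $\phi$, of the same form (identity on linear part), such that $\phi\circ f$ is still a $C_\infty$~quasi-isomorphism $m_1\to m_2$ and additionally preserves $\omega$.

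My main technical step is to analyse the pullback $f^*\omega$. Both $\omega$ and $f^*\omega$ are closed two-forms on the formal dg manifold $(\Pi A, m_1)$, both with the same leading (constant) term because $f$ is the identity in linear order; hence $\omega - f^*\omega$ is a closed two-form vanishing to positive symmetric order. I would then argue in two stages. First, show that $\omega - f^*\omega$ is exact in the relevant cyclic/Harrison complex computing closed formal two-forms on $(\Pi A, m_1)$ modulo exact ones; this is where the hypothesis that $A$ has \emph{zero differential} and that both $m_i$ lift the \emph{same} Frobenius multiplication enters decisively, because in minimal degree the obstruction is controlled by Harrison cohomology of $A$ with values in $A$, whose relevant piece is killed by Poincar\'e duality of the Frobenius pairing. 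Second, run a Moser argument: interpolate $\omega_t = (1-t)\omega + tf^*\omega$, write $\omega - f^*\omega = L_X \omega$ for a formal vector field $X$ vanishing to positive order, and integrate the flow of $X$ to a formal diffeomorphism $\phi$ with $\phi^*(f^*\omega) = \omega$. Then $\phi\circ f$ is cyclic, and inductively on order one checks that the homotopy class of $\phi\circ f$ as a $C_\infty$~map coincides with that of $f$.

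The main obstacle I expect is the first stage of the previous step, namely the identification of the obstruction cocycle and the proof that it is a coboundary. One has to identify the correct bicomplex of closed/exact symplectic forms on a minimal cyclic $C_\infty$~algebra, set up the Harrison--K\"ahler-type spectral sequence converging to it, and verify that the $E_2$ piece controlling the difference $\omega - f^*\omega$ is annihilated by the Frobenius duality isomorphism between $A$ and $A^*$. Once this vanishing is in place, the Moser integration and the inductive upgrading of $f$ to a cyclic map are essentially formal, since everything happens in a pronilpotent setting where exponentials converge term by term. This strategy parallels the proof of the classical Darboux theorem for formal symplectic manifolds, adapted to the operadic setting of cyclic $C_\infty$~algebras.
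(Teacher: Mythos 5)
A point of order first: the paper you are working from does not prove this statement at all. Theorem~\ref{thm:hlthm} is quoted, explicitly as a ``rewording'' of the main theorem of \cite{HL}, and is used as a black box in Propositions~\ref{cyclicmin} and~\ref{LS2}; so the only proof to compare yours with is the one in \cite{HL}. Your general plan --- recast minimal cyclic structures as odd vector fields preserving a constant symplectic form, compare $\omega$ with $f^{*}\omega$, and correct $f$ by a formal Moser flow whose existence is guaranteed by a cohomological vanishing coming from the pairing --- is in the spirit of the formal symplectic machinery of \cite{HL}. But as a proof it has a hole exactly at what you call ``stage one'', and that stage is not a verification: it \emph{is} the theorem. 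For the corrected map (whether written $f\circ\phi$ or $\phi\circ f$; your composition as stated fixes the wrong form) to be a morphism $(A,m_1)\to(A,m_2)$ at all, the Moser diffeomorphism $\phi$ must commute with the $C_\infty$-structure it is inserted next to; otherwise the composite is a morphism out of the transported structure $\phi^{-1}_{*}m_1$, and no statement about homotopy classes repairs that. Tracing through the Moser step, this forces the primitive $\alpha$ with $d\alpha=\omega-f^{*}\omega$ to be chosen $L_{m_1}$-invariantly: what is needed is exactness in the complex of $m_1$-invariant forms on the formal dg manifold $(\Pi A,m_1)$, not de Rham exactness, which is trivially available from the formal Poincar\'e lemma and proves nothing. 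Your proposed mechanism --- ``the relevant piece of Harrison cohomology is killed by Poincar\'e duality'' --- is not an argument: the Frobenius pairing only \emph{identifies} Harrison cohomology with coefficients in $A$ with Harrison cohomology with coefficients in $A^{*}$; it annihilates nothing by itself. Establishing this invariant exactness (in \cite{HL} it is done by genuine cohomological work with the cyclic/Hochschild complexes of the minimal algebra, using that $f$ already trivialises the non-cyclic part of the discrepancy) is the entire content, and your proposal defers it to an expectation.

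There are two further gaps. Your opening normalisation ``assume the linear component of $f$ is the identity'' is not free. Since both structures are minimal with zero differential, the linear part $f_1$ of a non-cyclic $C_\infty$-isomorphism is an automorphism of the graded algebra $(A,\cdot)$, but nothing in the hypothesis forces it to preserve the pairing. Composing with the strict isomorphism $f_1^{-1}$ replaces $m_2$ by a structure that is cyclic with respect to $f_1^{*}\omega$, and $f_1^{*}\omega-\omega$ is a \emph{constant} two-form, generally nonzero; then $\omega-f^{*}\omega$ no longer vanishes to positive symmetric order and the pronilpotent Moser integration cannot start. Note that the theorem is invoked in the paper only in situations where the non-cyclic equivalence induces an isomorphism of \emph{Frobenius} structures on homology (this is stated explicitly in the proofs of Propositions~\ref{LS2} and~\ref{cyclicmin}), i.e.\ where $f_1$ does respect the pairing; your argument needs that as a hypothesis, or else a proof that the intertwiner can be replaced by one with orthogonal linear part, which is not automatic. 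Finally, $\Der_{\geq 2}(\hat{T}\Pi A^{*})$ is the home of \emph{$A_\infty$}-structures; $C_\infty$-structures live in the Harrison (free Lie) sub-theory, so all of your forms, vector fields and flows must be shown to stay inside it --- as written, your argument targets the $A_\infty$ analogue rather than the $C_\infty$ statement quoted here.
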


Let us denote by $\g(M,V)$ a cyclic $L_\infty$~minimal model of $\Omega(M)\otimes V$. It has $H(M)\otimes H(V)$ as its underlying graded vector space; it is thus finite-dimensional and the scalar product is non-degenerate.

\begin{prop}\label{cyclicmin}
Let $M$ and $N$ be two closed oriented manifolds which are homotopy equivalent through an orientation-preserving homotopy equivalence. Then the cyclic $L_\infty$~algebras $\g(M,V)$ and $\g(N,V)$ are cyclic $L_\infty$~isomorphic.
\end{prop}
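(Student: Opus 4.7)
The plan is to reduce the statement to the cyclic $C_\infty$ uniqueness result Theorem~\ref{thm:hlthm} by first constructing and comparing cyclic $C_\infty$ minimal models of $\Omega(M)$ and $\Omega(N)$, and only then tensoring with $V$.

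First, I would observe that an orientation-preserving homotopy equivalence $f\co M\to N$ induces a quasi-isomorphism of cdgas $f^*\co\Omega(N)\to\Omega(M)$ and, since $f$ is orientation-preserving, a Frobenius algebra isomorphism $\phi\co H^*(N)\xrightarrow{\cong}H^*(M)$ on cohomology (the pairing is the Poincar\'e pairing, which is preserved by pullback for orientation-preserving homotopy equivalences). Applying the cyclic $C_\infty$ minimal model theorem (Theorem~\ref{thm:minmod}) I obtain finite-dimensional minimal cyclic $C_\infty$ algebras $A_M$ on $H^*(M)$ and $A_N$ on $H^*(N)$, lifting the respective Frobenius structures. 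Transporting $A_N$ along $\phi$ I obtain a cyclic $C_\infty$ structure on the same Frobenius algebra as $A_M$; the two structures are equivalent as non-cyclic $C_\infty$ algebras because both are minimal models of $\Omega(M)\simeq\Omega(N)$ (via $f^*$). Theorem~\ref{thm:hlthm} therefore provides a cyclic $C_\infty$ homotopy equivalence $A_M\simeq A_N$.

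Next, I would tensor with $V$. By the cyclic tensor product construction of Appendix~\ref{app:tensor}, which is homotopy-functorial in each argument, the cyclic equivalence $A_M\simeq A_N$ propagates to a cyclic $L_\infty$ equivalence $A_M\otimes V\simeq A_N\otimes V$. On the other hand, $A_M$ is by construction cyclic quasi-isomorphic to $\Omega(M)$ in the operadic sense relevant here, so $A_M\otimes V$ and $\Omega(M)\otimes V$ have the same cyclic $L_\infty$ minimal model up to cyclic $L_\infty$ isomorphism, namely $\g(M,V)$; the same holds for $N$. The uniqueness clause of Theorem~\ref{thm:minmod} then delivers the desired cyclic $L_\infty$ isomorphism $\g(M,V)\cong\g(N,V)$.

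The hardest step will be the transition from $A_M\otimes V$ back to $\Omega(M)\otimes V$ at the cyclic level, since the Poincar\'e pairing on the infinite-dimensional $\Omega(M)$ is degenerate in the operadic sense and so $\Omega(M)$ is only a weak cyclic $C_\infty$ algebra; one cannot directly invoke the usual endomorphism operad. Making this step rigorous is precisely what the cyclic endomorphism operad construction of Appendix~\ref{app:operads} is designed for: it is tailored to the infinite-dimensional setting when the homology is finite-dimensional, and produces a cyclic minimal model that is well-defined up to cyclic isomorphism. Invoking it in the form of Theorem~\ref{thm:minmod} will be the technical heart of the argument.
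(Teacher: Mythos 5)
Your proposal is correct and follows essentially the same route as the paper's own proof: induced quasi-isomorphism of de~Rham algebras, upgrade of the non-cyclic $C_\infty$ equivalence of minimal models to a cyclic one via Theorem~\ref{thm:hlthm}, tensoring with $V$ via Theorem~\ref{thm:homotopyequiv}, and conclusion by uniqueness of minimal models (Theorem~\ref{thm:minmod}) together with the fact that homotopy equivalent minimal cyclic $L_\infty$~algebras are cyclic $L_\infty$~isomorphic. One small terminological correction: the Poincar\'e pairing on $\Omega(M)$ is \emph{weakly non-degenerate} (the map $\Omega(M)\to\Omega(M)^*$ is injective), which is precisely the hypothesis under which the cyclic endomorphism operad $\E_c$ of Appendix~\ref{app:operads} and the SDR from the Hodge decomposition make the whole argument go through, exactly as you anticipate.
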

\begin{proof}
The given homotopy equivalence $M\to N$ induces a quasi-isomorphism of cdgas $f\co\Omega(N)\to\Omega(M)$ which preserves the pairings: $[\omega_1,\omega_2]=[f(\omega_1),f(\omega_2)]$ for $\omega_1,\omega_2\in\Omega(N)$. It follows that the $C_\infty$~minimal models of $\Omega(M)$ and $\Omega(N)$ are homotopy equivalent (in the sense of Definition~\ref{def:homotopyequiv}) as (non-cyclic) $C_\infty$~algebras. Furthermore, $H(M)$ and $H(N)$ are isomorphic as Frobenius algebras. Therefore, by Theorem~\ref{thm:hlthm} the cyclic $C_\infty$~minimal models of $\Omega(M)$ and $\Omega(N)$ are homotopy equivalent as \emph{cyclic} $C_\infty$~algebras. Tensoring these cyclic $C_\infty$~minimal models with $V$ we obtain, by Theorem~\ref{thm:homotopyequiv}, two homotopy equivalent cyclic $L_\infty$~algebras. These cyclic $L_\infty$~algebras are minimal models for $\Omega(M)\otimes V$ and $\Omega(N)\otimes V$, so are homotopy equivalent to $\g(M,V)$ and $\g(N,V)$ by the uniqueness of minimal models up to homotopy, cf.~Theorem \ref{thm:minmod}, (2). Therefore $\g(M,V)$ and $\g(N,V)$ are homotopy equivalent and hence cyclic $L_\infty$~isomorphic, cf.~Remark~\ref{rem:inftyhomotopyequiv}.
\end{proof}
Thus, the cyclic isomorphism class of $\g(M,V)$ is a homotopy invariant of the manifold $M$. In fact, it is clear that one only needs $M$ to be a Poincar\'e duality space in order to define $\g(M,V)$. Using sophisticated renormalization techniques, Costello \cite{Cos} proved that the $L_\infty$~algebra $\g(M,V)$ admits a quantum lift $\g^q(M,V)$, which is determined up to homotopy by the smooth structure on $M$. The homotopy type of the resulting quantum $L_\infty$~algebra is, therefore, a smooth invariant of $M$. It is not known whether $\g^q(M,V)$ is a homotopy invariant of $M$ but its genus zero part, which is just the $L_\infty$~algebra $\g(M,V)$, certainly \emph{is} a homotopy invariant. We will now analyze the situation using the obstruction-theoretic approach.
To this end let us recall the following theorem due to Lambrechts--Stanley \cite{LS}, which we restate in our current language.
\begin{theorem}\label{LS}
Let $A$ be a unital $\mathbb{Z}$--graded cdga with a possibly degenerate invariant graded pairing and which is \emph{simply-connected}, meaning $H^0(A)=\ground$ and $H^1(A)=0$. Furthermore assume that the pairing on $H(A)$ is non-degenerate. Then there exists a unital $\mathbb{Z}$--graded cyclic cdga $A'$ with a \emph{non-degenerate} invariant pairing which is \emph{weakly equivalent} to $A$, meaning it can be connected to $A$ by a zig-zag of quasi-isomorphisms of cdgas.
\end{theorem}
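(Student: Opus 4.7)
The plan is to adapt the standard construction of Poincaré duality models. The argument would proceed in three main stages: first reduce to a finite-dimensional cdga, then use the pairing to produce an acyclic ideal, then quotient to obtain the non-degenerate model.

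First, I would use the simply-connected hypothesis and Sullivan's minimal model machinery to replace $A$ by a finite-dimensional cdga $B$ quasi-isomorphic to it. The non-degeneracy of the pairing on $H(A)$ forces $H(A)$ to be a finite-dimensional Frobenius algebra of some top degree $n$; building a Sullivan minimal model $(\Lambda V,d)$ with $V$ concentrated in degrees $\geq 2$ and truncating above degree $n$ produces such a $B$. The invariant pairing on $A$ transports through the zig-zag of quasi-isomorphisms to yield a cocycle $\epsilon\co B\to\ground[-n]$ representing the class dual to the fundamental class in $H(B)$.

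Second, the pairing gives a map of dg $B$-bimodules
\[
\phi\co B\to B^*[-n],\qquad \phi(b)(b')=\epsilon(b\cdot b').
\]
Invariance of the pairing guarantees that $\phi$ is a chain map of bimodules, and the assumed non-degeneracy on cohomology means that $\phi$ is a quasi-isomorphism. Its kernel $K=\ker\phi$ is therefore an acyclic ideal of $B$, and the induced pairing on the quotient $B/K$ is non-degenerate on cohomology.

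Third, one would like to take $A':=B/K$: since $K$ is acyclic, the projection $B\to B/K$ is a quasi-isomorphism of cdgas, connecting $A'$ to $A$ by a zig-zag. The inherited pairing on $A'$ is invariant by construction. The main obstacle lies precisely here: one must verify that the pairing is non-degenerate in each degree at the chain level, not merely on cohomology. A priori $K$ is only an ideal whose quotient has non-degenerate induced pairing on homology, and the degreewise dimension counts in $B/K$ and $(B/K)^*[-n]$ may not coincide. This is where the Lambrechts--Stanley ingenuity enters: one must refine $K$ to a (possibly smaller) acyclic ideal $I\subset K$ such that $B/I$ is degreewise Poincaré self-dual. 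This is achieved by an inductive procedure, working upwards from low degrees and using simple-connectedness ($H^1(A)=0$, so there are no low-dimensional obstructions) to extend the pairing one dimension at a time while keeping the quotient quasi-isomorphic to $B$. The control this induction provides over the coherence of chain-level Poincaré duality versus cohomological Poincaré duality is the genuinely hard technical point in \cite{LS}, and there is no shortcut around it; everything else in the argument is essentially formal.
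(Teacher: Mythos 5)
First, a point of comparison: the paper does not prove this statement at all --- Theorem~\ref{LS} is quoted (restated in the paper's language) from Lambrechts--Stanley \cite{LS}, so the paper's ``proof'' is a citation. In deferring the core inductive construction to \cite{LS} you are therefore no less complete than the paper itself. However, the portion of the argument you do spell out contains a genuine error, and it sits exactly at the point that makes the theorem hard, so it misrepresents where the difficulty lies.

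Your second step claims that $K=\ker\phi$ is acyclic ``because $\phi$ is a quasi-isomorphism.'' This inference is invalid: the kernel of a quasi-isomorphism is acyclic only when the map is \emph{surjective}, so that the long exact sequence of $0\to K\to B\to B/K\cong \operatorname{im}\phi\to 0$ applies with $\operatorname{im}\phi$ equal to the target. Here $B$ and $B^*[-n]$ have the same finite dimension, so $\phi$ is surjective if and only if it is injective, i.e.~if and only if there are no orphans at all; whenever $K\neq 0$ the argument collapses, and the failure of acyclicity of the orphan ideal is precisely why \cite{LS} is a substantial paper rather than a two-line remark. Moreover, you have the residual difficulty backwards. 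The induced pairing on $B/K$ is \emph{automatically} non-degenerate at the chain level: if $b\notin K$ then, by the very definition of $K$, there exists $b'$ with $\epsilon(bb')\neq 0$. What can fail is that $B\to B/K$ is a quasi-isomorphism. Consequently your proposed repair --- refine $K$ to a smaller acyclic ideal $I\subset K$ so that $B/I$ is degreewise self-dual --- cannot work even in principle: for any proper differential ideal $I\subsetneq K$, the class of any element of $K\setminus I$ is a nonzero orphan in $B/I$, so $B/I$ has a degenerate pairing. The genuine content of \cite{LS} is that one cannot stay inside the lattice of quotients of a fixed $B$: one must modify the algebra itself through a zig-zag of quasi-isomorphisms (adjoining generators as well as forming quotients), exploiting simple-connectedness degree by degree, until one reaches an algebra whose orphan ideal is acyclic --- at which point, and only at which point, the quotient is simultaneously a model and chain-level non-degenerate.
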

\begin{rem}
A non-degenerate graded pairing on $A'$, is equivalent to a non-degenerate graded trace $A'\to \ground$. Because $A'$ is connected, any such trace corresponds uniquely to a fundamental cycle in $A'$, i.e.~a non-zero element of top degree, moreover it can be chosen arbitrarily. With a suitable choice of a fundamental cycle in $A'$ we can always achieve that the cohomology of $A$ and $A'$ are isomorphic as Frobenius algebra and we will assume this in what follows.
\end{rem}
It is not claimed that a Lambrechts--Stanley model is unique. However we have the following useful strengthening of the Lambrechts--Stanley theorem showing that it is unique \emph{up to a strong cyclic homotopy equivalence}.
\begin{prop}\label{LS2}
Let $A$ be a unital $\mathbb{Z}$--graded cyclic cdga which is simply connected and let $A'$ be a Lambrechts--Stanley model. Then the cyclic $C_\infty$~minimal models of $A$ and $A'$ are homotopy equivalent.
\end{prop}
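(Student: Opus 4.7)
The plan is to reduce the statement to the main theorem of~\cite{HL} as recalled in Theorem~\ref{thm:hlthm}. By Theorem~\ref{LS}, the cdgas $A$ and $A'$ are weakly equivalent, i.e.\ linked by a zig-zag of quasi-isomorphisms of cdgas. The (non-cyclic) homotopy transfer theorem for $C_\infty$-algebras, or equivalently the uniqueness of $C_\infty$-minimal models up to homotopy of $C_\infty$-algebras, then implies that the $C_\infty$-minimal models of $A$ and $A'$ are homotopy equivalent as (non-cyclic) $C_\infty$-algebras.

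Next, by the remark following Theorem~\ref{LS}, the fundamental cycle on $A'$ can be chosen so that the induced identification $H(A) \cong H(A')$ is an isomorphism of Frobenius algebras. Consequently, the cyclic $C_\infty$-minimal models of $A$ and $A'$ can both be regarded as cyclic $C_\infty$-structures on the \emph{same} underlying unital commutative Frobenius algebra $H(A)$ with trivial differential, each lifting the Frobenius structure. We are therefore in precisely the setting of Theorem~\ref{thm:hlthm}.

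Finally, applying Theorem~\ref{thm:hlthm}—which tells us that two minimal cyclic $C_\infty$-structures on a fixed unital commutative Frobenius algebra with zero differential are homotopy equivalent as cyclic $C_\infty$-algebras if and only if they are homotopy equivalent as non-cyclic $C_\infty$-algebras—we conclude that the cyclic $C_\infty$-minimal models of $A$ and $A'$ are homotopy equivalent as \emph{cyclic} $C_\infty$-algebras, as required.

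The only delicate point is arranging the Frobenius compatibility of the identification $H(A) \cong H(A')$, since the intermediate cdgas in the Lambrechts--Stanley zig-zag need not carry non-degenerate pairings and the quasi-isomorphisms are not, a priori, cyclic. However, once the top-degree element of $A'$ is selected so that the induced trace on $H(A')$ matches the one inherited from $A$, the identification of cohomology rings is automatically a Frobenius isomorphism, and the heavy lifting is done by Theorem~\ref{thm:hlthm}. Thus the proposition is essentially a direct corollary of the Lambrechts--Stanley theorem combined with the main theorem of~\cite{HL}.
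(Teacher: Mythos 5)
Your proposal is correct and follows essentially the same route as the paper's own proof: use the weak equivalence of $A$ and $A'$ to get a homotopy equivalence of (non-cyclic) $C_\infty$~minimal models, arrange (via the choice of fundamental cycle, as in the remark after Theorem~\ref{LS}) that the induced identification $H(A)\cong H(A')$ is a Frobenius algebra isomorphism, and then invoke Theorem~\ref{thm:hlthm} to upgrade to a \emph{cyclic} homotopy equivalence. The only cosmetic difference is that you spell out the Frobenius-compatibility step explicitly, which the paper compresses into a single clause.
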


\begin{proof}
Since $A$ and $A'$ are weakly equivalent as cdgas, it follows that the cyclic $C_\infty$~minimal models of $A$ and $A'$ are homotopy equivalent as (non-cyclic) $C_\infty$~algebras, moreover this homotopy equivalence induces an isomorphism of graded Frobenius algebra structures on $H(A)$ and $H(A')$. Then by Theorem~\ref{thm:hlthm} it follows that the minimal models are in fact homotopy equivalent as \emph{cyclic} $C_\infty$~algebras.
\end{proof}

Now, let $M$ be a simply-connected closed oriented manifold and consider the finite-dimensional cyclic cdga $\tilde{\Omega}(M)$ corresponding to $\Omega(M)$ by Theorem~\ref{LS}; according to our conventions we will view $\tilde{\Omega}(M)$ as a $\mathbb Z/2$--graded cdga. Let $V$ be a cyclic $L_\infty$~algebra of parity opposite to that of $\dim M$. By the homotopy invariance of tensoring with $V$ (see Theorem~\ref{thm:homotopyequiv}) we see that $\g(M,V)$ is cyclic $L_\infty$~isomorphic as a cyclic $L_\infty$~algebra to the cyclic minimal model of $\tilde{\Omega}(M)\otimes V$. Therefore $\g(M,V)$ lifts to a unimodular (or quantum) $L_\infty$~algebra if and only if $\tilde{\Omega}(M)\otimes V$ does.

Note that the cdga $\tilde{\Omega}(M)$ is local and so its only non-zero idempotent is the identity element $1$. It is unimodular if and only if the multiplication by $1$ has zero trace which in turn holds if and only if $\chi(M)=0$.
These arguments, together with Theorem~\ref{tenpr} yield the following result.
\begin{theorem}\label{CS1}
Let $M$ be a simply-connected closed oriented manifold and $V$ be a cyclic $L_\infty$~algebra whose parity is opposite to $\dim M$. Then $\g(M,V)$ has a unimodular lift if and only if either $V$ has a unimodular lift or $\chi(M)=0$.\qed
\end{theorem}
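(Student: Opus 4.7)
The plan is to assemble the statement from the results already developed in the paper, the key inputs being Theorem~\ref{LS}, Proposition~\ref{LS2}, Theorem~\ref{tenpr}, and the tensor-product homotopy invariance of the appendices. First, I would apply the Lambrechts--Stanley theorem to replace the infinite-dimensional cdga $\Omega(M)$ by a finite-dimensional cyclic cdga $\tilde{\Omega}(M)$ with non-degenerate invariant pairing; the simply-connectedness of $M$ guarantees the hypotheses $H^0=\ground$ and $H^1=0$, and with an appropriate choice of fundamental cycle one arranges that $H^\ast(\tilde{\Omega}(M)) \cong H^\ast(M)$ as Frobenius algebras.

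Next, I would identify $\g(M,V)$ with the cyclic $L_\infty$~minimal model of $\tilde{\Omega}(M)\otimes V$. By Proposition~\ref{LS2} the cyclic $C_\infty$~minimal models of $\Omega(M)$ and $\tilde{\Omega}(M)$ are homotopy equivalent as \emph{cyclic} $C_\infty$~algebras; tensoring this equivalence with $V$ and invoking the homotopy invariance of the cyclic tensor product (Theorem~\ref{thm:homotopyequiv} in Appendix~\ref{app:tensor}) produces a cyclic $L_\infty$~equivalence between the cyclic minimal model of $\Omega(M)\otimes V$ (namely $\g(M,V)$) and the cyclic minimal model of $\tilde{\Omega}(M)\otimes V$. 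Since existence of a unimodular lift depends only on the cyclic $L_\infty$~isomorphism class, $\g(M,V)$ admits such a lift if and only if $\tilde{\Omega}(M)\otimes V$ does.

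At this point Theorem~\ref{tenpr}(2), applied to the finite-dimensional cdga $\tilde{\Omega}(M)$ and the cyclic $L_\infty$~algebra $V$, reduces the problem to showing that $\tilde{\Omega}(M)$ is unimodular in the sense of Definition~\ref{def:commutativeunimodular} precisely when $\chi(M)=0$. Because $M$ is simply-connected the cdga $\tilde{\Omega}(M)$ is local, so its only non-zero idempotent is $1$, and Definition~\ref{def:commutativeunimodular} therefore collapses to the vanishing of the supertrace of multiplication by $1$, i.e.\ the graded dimension of $\tilde{\Omega}(M)$. By Remark~\ref{rem:cdgastrict} this trace can be read off on cohomology $H^\ast(\tilde{\Omega}(M)) \cong H^\ast(M)$, where it equals $\chi(M)$.

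The only step requiring more than assembly is checking that the cyclic tensor-product homotopy-invariance genuinely applies here, given that $\Omega(M)$ is not finite-dimensional and that $\g(M,V)$ was defined via its minimal model rather than via $\tilde{\Omega}(M)$ directly; but this is exactly the level of generality the appendix material was set up to handle, so I expect no genuine obstacle. Modulo these bookkeeping points the proof is a direct chain of implications.
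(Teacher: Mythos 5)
Your proposal is correct and follows essentially the same route as the paper: Lambrechts--Stanley plus Proposition~\ref{LS2} and Theorem~\ref{thm:homotopyequiv} to identify $\g(M,V)$ with the cyclic minimal model of $\tilde{\Omega}(M)\otimes V$ up to cyclic $L_\infty$~isomorphism, then Theorem~\ref{tenpr}(2) together with the observation that the local cdga $\tilde{\Omega}(M)$ is unimodular precisely when the trace of multiplication by $1$, i.e.~$\chi(M)$, vanishes. Your use of Remark~\ref{rem:cdgastrict} to read the trace off on cohomology is a harmless variant of the paper's direct Euler-characteristic computation.
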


We can also analyze the existence of a \emph{quantum lift} of $\g(M,V)$.
\begin{cor}\label{CS2}
Let $M$ and $V$ be as in Theorem~\ref{CS1}. Then if either $\chi(M)=0$ or $V$ is strictly unimodular then $\g(M,V)$ admits a quantum lift. Conversely, if $\g(M,V)$ admits a quantum lift then either $\chi(M)=0$ or $V$ has a unimodular lift.
\end{cor}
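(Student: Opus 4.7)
The plan is to combine Theorem~\ref{tenpr}, Theorem~\ref{CS1}, and Proposition~\ref{obstr} with the invariance of quantum lifts under cyclic $L_\infty$~isomorphism (the proposition stated immediately after Proposition~\ref{obstr}). The reverse implication is the easier one: if $\g(M,V)$ has a quantum lift $S = S_0 + hS_1 + \cdots$, then extracting the order-$h$ part of the quantum master equation gives $dS_1 + \Delta(S_0) + \{S_0,S_1\} = 0$, so that $(X_{S_0},S_1)$ is a unimodular lift of $X_{S_0}$ by Proposition~\ref{obstr}. Theorem~\ref{CS1} then forces $\chi(M) = 0$ or $V$ to admit a unimodular lift.

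For the forward direction, note that the local cdga $\tilde{\Omega}(M)$ has only trivial idempotents, so it is unimodular in the sense of Definition~\ref{def:commutativeunimodular} exactly when $\chi(M) = 0$. Under either hypothesis of the corollary, Theorem~\ref{tenpr}(1) then shows that $\tilde{\Omega}(M) \otimes V$ is \emph{strictly} unimodular: its cyclic $L_\infty$~vector field $m = X_{S_0}$ satisfies $\nabla(m) = 0$ on the nose. Since $\Delta(S_0) = \tfrac{1}{2}\nabla(X_{S_0}) = 0$, the constant-in-$h$ element $S = S_0$ tautologically solves the quantum master equation, so $\tilde{\Omega}(M) \otimes V$ carries a trivial quantum lift.

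It remains to transfer this lift to the cyclic minimal model $\g(M,V)$. By Theorem~\ref{CS1} the cyclic $L_\infty$~algebra $\g(M,V)$ admits a unimodular lift, and the cyclic analogue of Corollary~\ref{liftstrict} — obtained by running its proof inside the cyclic dgla $\Der^{\cyc}_{\geq 2}(\hat{S}\Pi\g(M,V)^*)$ rather than the full $\Der_{\geq 2}(\hat{S}\Pi\g(M,V)^*)$ — implies that $\g(M,V)$ is pointed cyclic $L_\infty$~isomorphic to a strictly unimodular cyclic $L_\infty$~structure. The latter has the tautological quantum lift $S = S_0$, and the invariance of quantum lifts under pointed cyclic $L_\infty$~isomorphism then transports this lift back to $\g(M,V)$. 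The main obstacle is this cyclic enhancement of Corollary~\ref{liftstrict}; its proof mirrors the non-cyclic one, but one must verify that the gauges used to absorb the divergence into the underlying $L_\infty$~structure respect the odd symplectic form on $\Pi\g(M,V)$, which follows because the entire obstruction-theoretic framework of Section~\ref{spaceoflifts} can be re-run verbatim inside the cyclic dgla.
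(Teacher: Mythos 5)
Your reverse implication is exactly the paper's: quantum lift $\Rightarrow$ unimodular lift (Proposition~\ref{obstr}), then Theorem~\ref{CS1}. The first half of your forward direction also matches: under either hypothesis $\tilde{\Omega}(M)\otimes V$ is strictly unimodular by Theorem~\ref{tenpr}(1), hence $\Delta(S_0)=\tfrac12\nabla(X_{S_0})=0$ and $S=S_0$ is a tautological quantum lift. The gap is in your transfer step. You route it through a ``cyclic analogue of Corollary~\ref{liftstrict}'', claiming the framework of Section~\ref{spaceoflifts} ``can be re-run verbatim inside the cyclic dgla'' $\Der^{\cyc}_{\geq 2}(\hat{S}\Pi\g(M,V)^*)$. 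It cannot: the engine of Corollary~\ref{liftstrict} is Lemma~\ref{oddbehaviour}, the (near-)surjectivity of $\nabla$ on \emph{all} of $\Der_{\geq 2}$, and this fails badly on the cyclic subalgebra. Cyclic derivations are Hamiltonian vector fields $X_g$, and $\nabla(X_g)=2\Delta(g)$, so surjectivity of $\nabla$ restricted to $\Der^{\cyc}_{\geq 2}$ amounts to surjectivity of the odd Laplacian $\Delta$ on Hamiltonians --- which never holds: in Darboux coordinates $x_1,\dots,x_n$ (even), $\xi_1,\dots,\xi_n$ (odd) one has $\Delta=\sum_i\partial_{x_i}\partial_{\xi_i}$, and the monomial $\xi_1\cdots\xi_n$ is not in $\im\Delta$, since any monomial $\mu$ with $\partial_{\xi_i}\partial_{x_i}\mu$ proportional to $\xi_1\cdots\xi_n$ would have to contain $\xi_i$ twice. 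So the divergence-absorbing gauges you invoke cannot in general be chosen symplectic by the paper's argument, and whether the cyclic strictification statement holds at all (even modulo CE coboundaries) would require a genuinely new analysis, with its own exceptional cases.

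Moreover this detour is unnecessary, and the paper avoids it. The transfer from $\tilde{\Omega}(M)\otimes V$ to $\g(M,V)$ is already available: in the paragraph preceding Theorem~\ref{CS1} the paper establishes (via homotopy invariance of tensoring, Theorem~\ref{thm:homotopyequiv}, uniqueness of cyclic minimal models, and the proposition on invariance of quantum lifts under cyclic $L_\infty$~isomorphism --- the very proposition you cite in your opening plan) that $\g(M,V)$ admits a unimodular or quantum lift if and only if $\tilde{\Omega}(M)\otimes V$ does. Once you know $\tilde{\Omega}(M)\otimes V$ is strictly unimodular and hence has the trivial quantum lift $S=S_0$, that equivalence finishes the forward direction in one line; no strictification of $\g(M,V)$ itself, cyclic or otherwise, is needed.
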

\begin{proof}
If $\chi(M)=0$ then $\tilde{\Omega}(M)$ is a unimodular cdga and thus, by Theorem~\ref{tenpr} (1) the $L_\infty$~algebra $\tilde{\Omega}(M)\otimes V$ is strictly unimodular and thus, admits a trivial quantum lift. This implies that $\g(M,V)$ likewise admits a quantum lift. Similarly if $V$ is strictly unimodular then $\tilde{\Omega}(M)\otimes V$ is strictly unimodular which again, implies that $\g(M,V)$ admits a quantum lift.

Finally, assume that $\g(M,V)$ admits a quantum lift. Then it must also admit a unimodular lift and by Theorem~\ref{CS1} either $\chi(M)=0$ or $V$ has a unimodular lift.
\end{proof}
\begin{example}
Let $\g$ be an $L_\infty$~algebra which does not have a unimodular lift; e.g.~one can take an ordinary Lie algebra that is not unimodular, cf.~Example \ref{nonunim}. Then the odd double $\Dod(\g)$ is a cyclic $L_\infty$~algebra which does \emph{not} admit a unimodular lift by Theorem \ref{quantunim}. It follows that for a simply-connected manifold $M$ with $\chi(M)\neq 0$ the $L_\infty$~algebra $\Omega(M)\otimes \Dod(\g)$ does not have a unimodular lift. Since quantum liftability is a stronger condition, it does not admit a quantum lift either.
\end{example}
\begin{rem}\
\begin{enumerate}
\item If $M$ is a \emph{formal} manifold, meaning that the de~Rham algebra $\Omega(M)$ is quasi-isomorphic as a cdga to its cohomology $H(M)$ (for example, if $M$ is a two dimensional surface, or a K\"ahler manifold), then we may drop the assumption that $M$ is simply connected from Theorem~\ref{CS1} and Corollary~\ref{CS2}, since we no longer need the Lambrechts--Stanley result in this case. Indeed, we just use the Frobenius algebra $H(M)$ for the algebra $\tilde{\Omega}(M)$ in the arguments above.
\item
If $\dim M$ is odd (as in the traditional setup of Chern--Simons theory) then $\chi(M)=0$ and it follows that $\g(M,V)$ admits a quantum lift (and thus, also a unimodular lift).
\item
We proved, therefore, that (in favorable circumstances) the cyclic $L_\infty$~algebra $\g(M,V)$ has a quantum lift. However our methods do not extend to constructing a \emph{canonical} lift. The existence of a canonical lift was claimed by Costello in \cite{Cos}. The necessity of $V$ being unimodular in the case $\dim M$ is even and $\chi(M)\neq 0$ was omitted in op.~cit.
\end{enumerate}
\end{rem}

\appendix

\section{Maurer--Cartan elements and lifts}\label{app:mc}
The purpose of this appendix is to review the theory of Maurer--Cartan (MC) elements in dglas and their moduli. The material here is more or less standard, perhaps excepting the theory of lifts of MC elements. A more detailed and general overview is contained in \cite{braun}.

\begin{defi}
Let $\g$ be a differential graded Lie algebra. An \emph{MC element in $\g$} is a degree one element $\xi\in\g$ satisfying the Maurer--Cartan equation
\begin{equation}\label{MCdef}
d\xi + \frac{1}{2}[\xi, \xi]=0.
\end{equation}
We denote the set of MC elements in $\g$ by $\MC(\g)$.
\end{defi}

Since a map of dglas $\g\rightarrow\h$ takes MC elements to MC elements we see that $\MC$ defines a functor on dglas.
\begin{defi}
For a dgla $\g$ and $\xi\in\MC(\g)$ define the differential $d^\xi$ on $\g$ as $d^\xi:=d + \ad(\xi)$. It is straightforward to check that $(d^\xi)^2=0$. We will denote by $\g^\xi$ the dgla whose underlying graded Lie algebra is the same as $\g$ but instead of the differential given by just $d$, the differential is given by $d^\xi$.
\end{defi}

\begin{prop}
Let $\g$ be a dgla and let $\xi\in\g$ be an odd element. Then there is a bijection $\MC(\g^\xi)\rightarrow\MC(\g)$ given by $\eta \mapsto \eta + \xi$.
\end{prop}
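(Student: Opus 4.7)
The plan is a one-step verification: expand the Maurer--Cartan equation for $\eta+\xi\in\g$ and compare with the Maurer--Cartan equation for $\eta$ in the shifted dgla $\g^\xi$. The only subtlety is that the definition of $\g^\xi$ given in the text requires $\xi$ itself to be an MC element (without this hypothesis the operator $d^\xi=d+\ad(\xi)$ does not square to zero in general, since a direct computation gives $(d^\xi)^2=\ad(d\xi+\tfrac12[\xi,\xi])$), so I read the statement as implicitly assuming $\xi\in\MC(\g)$.

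Since $\xi$ and $\eta$ are both odd, graded antisymmetry of the bracket gives $[\xi,\eta]=[\eta,\xi]$, so that $\tfrac12[\eta+\xi,\eta+\xi]=\tfrac12[\eta,\eta]+[\xi,\eta]+\tfrac12[\xi,\xi]$. Expanding and regrouping the MC expression for $\eta+\xi$ yields
\begin{align*}
d(\eta+\xi)+\tfrac12[\eta+\xi,\eta+\xi]
&=\bigl(d\eta+[\xi,\eta]+\tfrac12[\eta,\eta]\bigr)+\bigl(d\xi+\tfrac12[\xi,\xi]\bigr)\\
&=d^\xi\eta+\tfrac12[\eta,\eta],
\end{align*}
where the last equality uses the hypothesis $\xi\in\MC(\g)$ to kill the second parenthesised term. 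Thus the translation $\eta\mapsto\eta+\xi$ carries $\MC(\g^\xi)$ into $\MC(\g)$, and moreover $\eta+\xi\in\MC(\g)$ if and only if $\eta\in\MC(\g^\xi)$.

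For the inverse, given any $\eta'\in\MC(\g)$ set $\eta:=\eta'-\xi$; the displayed computation, read in reverse, shows $\eta\in\MC(\g^\xi)$. The assignment $\eta'\mapsto\eta'-\xi$ is therefore a two-sided inverse, completing the bijection. There is essentially no obstacle beyond tracking the signs arising from bracketing two odd elements; conceptually, the proposition simply records that $\MC(\g)$ behaves as an affine translate of $\MC(\g^\xi)$ under the shift by a fixed MC element.
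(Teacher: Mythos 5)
Your proof is correct and takes essentially the same route as the paper's: expand $d(\eta+\xi)+\tfrac{1}{2}[\eta+\xi,\eta+\xi]$, use $[\xi,\eta]=[\eta,\xi]$ for odd elements, absorb $d\eta+[\xi,\eta]$ into $d^\xi\eta$, and kill the remaining term $d\xi+\tfrac{1}{2}[\xi,\xi]$. Your reading of the implicit hypothesis $\xi\in\MC(\g)$ is also the right one: the paper's definition of $\g^\xi$ already assumes it, and the paper's own computation (which contains a typographical slip in its second line) only yields the stated equivalence under exactly that assumption.
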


\begin{proof}
We calculate that
\begin{align*}
d(\eta + \xi) + \frac{1}{2}[\eta+\xi,\eta+\xi] &= d\xi + \frac{1}{2}[\xi,\xi] + d\eta  +\frac{1}{2}[\eta,\xi] + \frac{1}{2}[\xi,\eta] +\frac{1}{2}[\eta,\eta]
\\
&d\eta + d^\xi\eta + \frac{1}{2}[\eta,\eta]
\end{align*}
and so $\eta\in\MC(\g^\xi)$ if and only if $\eta+\xi\in\MC(\g)$.
\end{proof}

\subsection{The Maurer--Cartan moduli set}
Denote by $\ground[z,dz]$ the free unital differential graded commutative algebra on the generators $z$ and $dz$ with $|z| = 0$, $|dz| = 1$ and $d(z) = dz$.
\begin{defi}
Two elements $\xi,\eta\in\MC(\g)$ are called \emph{homotopic} if there is an element $h\in\MC(\g\otimes\ground[z,dz])$ with $h|_{0} = \xi$ and $h|_{1} = \eta$.
\end{defi}

Homotopy of Maurer--Cartan elements defines a relation that may not be transitive (unless $\CE(\g)$ is cofibrant) so we will consider the transitive closure.

\begin{defi}
We denote by $\MCmod(\g)$ the set of equivalence classes under the transitive closure of the homotopy relation. We call this \emph{the Maurer--Cartan moduli set of $\g$}.
\end{defi}

\subsection{Gauge equivalence}
Let $\g$ be a \emph{pronilpotent} Lie algebra, by which we mean an inverse limit of nilpotent algebras. Recall that for every such Lie algebra there is an associative product $\bullet\co\g\times\g\rightarrow \g$ given by the Baker--Campbell--Hausdorff formula which is functorial (given $f\co\g\rightarrow\h$ then $f(x\bullet y) = f(x)\bullet f(y)$) and for any unital associative algebra $A$ with pronilpotent ideal $I$ it holds for any $a,b\in I$ that $e^a e^b = e^{a\bullet b}$ where $e^a = \sum_{n\geq 0} \frac{a^n}{n!}\in A$ and $\bullet$ is taken with respect to the commutator Lie bracket on $A$. A property of $\bullet$ is that for any $x,y\in\g$ if $[x,y]=0$ then $x\bullet y = x + y$.

Define the group $\exp(\g) = \{e^x : x\in\g \}$ with product defined as $e^x\cdot e^y = e^{x\bullet y}$. The identity is $1=e^0$ and $e^x\cdot e^{-x} = e^{-x}\cdot e^{x} = 1$. It follows from the pronilpotency of $\g$ and the above properties of $\bullet$ that the adjoint representation $y\mapsto \ad(y)$ exponentiates to an action of $\exp(\g)$ on $\g$ given by $e^y\mapsto e^{\ad(y)}$.

Let $\g$ be a pronilpotent dgla and let $\xi\in\MC(\g)$ and $y\in\g^0$. Define the \emph{gauge action} by
\[
e^y\cdot \xi = e^{\ad(y)}\xi + (de^{\ad(y)})y = \xi + \sum_{n=1}^{\infty} \frac{1}{n!}(\ad(y))^{n-1}(\ad(y)\xi-dy).
\]
Then this indeed gives an action of $\exp(\g^0)$ on $\MC(\g)$.

\begin{prop}\label{prop:stabcycles}
Let $\g$ be a pronilpotent dgla. Given $\xi\in\MC(\g)$ then $\exp(\g^0)_\xi = \{ e^y : (d+d^\xi)y = 0 \}$ where $\exp(\g^0)_\xi$ is the stabiliser of $\xi$ by the gauge action.
\end{prop}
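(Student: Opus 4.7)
The plan is to compute directly, from the explicit formula given for the gauge action, the condition for $e^y\cdot\xi=\xi$ and show that it reduces precisely to $d^\xi y=0$ (understanding the proposition's notation $(d+d^\xi)y=0$ as the twisted cocycle condition $d^\xi y = dy+[\xi,y]=0$). Starting from
\[
e^y\cdot\xi-\xi = \sum_{n=1}^{\infty}\frac{1}{n!}(\ad y)^{n-1}\bigl(\ad(y)\xi-dy\bigr),
\]
the first step is to rewrite the \emph{inner} factor $\ad(y)\xi-dy$ in terms of the twisted differential $d^\xi$. Since $y$ has degree $0$ and $\xi$ has degree $1$, the super antisymmetry of the bracket gives $[y,\xi]=-[\xi,y]$, and hence $\ad(y)\xi-dy=-(dy+[\xi,y])=-d^\xi y$. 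This is the key identity and reveals the equation we want to solve in closed form.

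Substituting back, the stabiliser condition becomes
\[
\Phi(\ad y)\,d^\xi y \;=\; 0, \qquad \text{where } \Phi(t):=\sum_{n=1}^{\infty}\frac{t^{n-1}}{n!}=\frac{e^{t}-1}{t}.
\]
Next I would argue that the operator $\Phi(\ad y)$ is invertible on $\g$. Because $\g$ is pronilpotent, $\ad y$ acts in a locally nilpotent fashion on each finite-dimensional quotient in the inverse system defining $\g$, so $\Phi(\ad y)$ makes sense and its constant term $\Phi(0)=1$ is a unit; it can therefore be inverted term by term. Consequently $\Phi(\ad y)d^\xi y=0$ if and only if $d^\xi y=0$.

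This immediately proves both inclusions: if $e^y\in\exp(\g^0)_\xi$ then the computation forces $d^\xi y=0$; conversely, if $d^\xi y=0$, the same computation shows that the infinite sum defining $e^y\cdot\xi-\xi$ vanishes term by term, giving $e^y\cdot\xi=\xi$. The only genuine subtlety is the sign check turning $\ad(y)\xi-dy$ into $-d^\xi y$ and the justification that pronilpotency is precisely what is needed to invert $\Phi(\ad y)$; neither requires more than a careful but routine verification.
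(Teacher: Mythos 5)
Your proof is correct, and its treatment of the converse inclusion is genuinely different from the paper's. The easy direction is identical in both (if $\ad(y)\xi-dy=0$, every term of the gauge-action series vanishes), but for the hard direction the paper argues by a leading-term contradiction: passing to a nilpotent quotient $\g_i$, it takes the \emph{least} $N$ with $(\ad(y))^{N}(\ad(y)\xi-dy)=0$, applies $(\ad(y))^{N-1}$ to the equation $e^y\cdot\xi=\xi$ so that all terms with $n\geq 2$ die, and is left with $(\ad(y))^{N-1}(\ad(y)\xi-dy)=0$, contradicting minimality unless $N=0$. You instead package the whole series as $\Phi(\ad(y))\,(dy+[\xi,y])$ with $\Phi(t)=(e^t-1)/t$ and invert the operator $\Phi(\ad(y))$, using that $\Phi(0)=1$ and that $\ad(y)$ acts nilpotently on each $\g_i$ (the inverses are compatible with the transition maps of the inverse system, since these commute with $\ad(y)$, so they assemble over the limit). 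Both arguments rest on the same two pillars — the explicit gauge-action formula and reduction to nilpotent quotients — but yours proves invertibility (hence injectivity) of $\Phi(\ad(y))$ abstractly and yields both inclusions in one stroke, whereas the paper proves only the injectivity it needs, by hand, and avoids invoking formal power series inversion. Your sign computation $\ad(y)\xi-dy=-(dy+[\xi,y])$ is right, and your reading of the statement's $(d+d^\xi)y=0$ as the twisted cocycle condition $dy+[\xi,y]=0$ is the intended one: the paper's notation here is internally inconsistent with its own definition $d^\xi=d+\ad(\xi)$ (taken literally it would mean $2dy+[\xi,y]=0$), and the paper's own proof confirms the twisted-cocycle interpretation.
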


\begin{proof}
Let $y\in \g^0$. Since $(d+d^{\xi})y=-(\ad(y)\xi - dy)$ then if $(d+d^{\xi})y=0$ it is clear that $e^y\cdot \xi = \xi$. Conversely since $\g=\lim_{\leftarrow}\g_i$ it is sufficient to show that $\ad(y)\xi - dy=0$ under the image of every $\g\rightarrow\g_i$. The $\g_i$ are nilpotent so for each $\g_i$ there exists some least $N$ such that $(\ad(y))^{N}(\ad(y)\xi-dy)=0$. Then $(\ad(y))^{N-1}(e^y\cdot\xi - \xi) = (\ad(y))^{N-1}(\ad(y)\xi - dy) = 0$ so $\ad(y)\xi-dy=0$ as required.
\end{proof}

\begin{defi}
Let $\g$ be a pronilpotent dgla. Two Maurer--Cartan elements $\xi,\eta\in\MC(\g)$ are called \emph{gauge equivalent} if there is an element $y\in \g^0$ such that $e^y\cdot\xi = \eta$.
\end{defi}

It is natural to also consider the quotient of $\MC(\g)$ by the gauge action. In fact the following important result, due originally to Schlessinger--Stasheff \cite{SS}, tells us the quotient is precisely $\MCmod(\g)$:

\begin{theorem}[Schlessinger--Stasheff theorem]\label{thm:ssthm}
Two Maurer--Cartan elements are gauge equivalent if and only if they are homotopic.
\end{theorem}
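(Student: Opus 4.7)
\emph{Plan.} The plan is to handle the two implications separately.

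\emph{Gauge equivalent implies homotopic.} Given $y\in\g^0$ with $e^y\cdot\xi=\eta$, I would write down the explicit homotopy
\[
h \;:=\; e^{zy}\cdot\xi \;-\; y\,dz \;\in\; \g\otimes\ground[z,dz].
\]
Evaluating at $z=0$ and $z=1$ (where $dz$ automatically vanishes) yields $\xi$ and $\eta$ respectively. To check $h\in\MC(\g\otimes\ground[z,dz])$, decompose the MC equation by $dz$-degree: the $dz$-free summand is the MC condition for $\alpha(z):=e^{zy}\cdot\xi$, which holds because the gauge action preserves $\MC(\g)$; the $dz$-part reduces to the infinitesimal flow identity $\partial_z\alpha(z)=-d^{\alpha(z)}y$, itself the derivative at $\epsilon=0$ of the group law $e^{(z+\epsilon)y}\cdot\xi=e^{\epsilon y}\cdot\alpha(z)$ (which in turn comes from the first-order expansion $e^u\cdot\zeta=\zeta-d^\zeta u+O(u^2)$ of the gauge action).

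\emph{Homotopic implies gauge equivalent.} This is the substantive direction. Writing $h=\alpha(z)+\beta(z)\,dz\in\MC(\g\otimes\ground[z,dz])$, the MC equation splits into the statement that $\alpha(z)$ is a $z$-polynomial family of MC elements together with the flow equation
\[
\partial_z\alpha(z)\;=\;d^{\alpha(z)}\beta(z).
\]
I would integrate this flow by constructing a path $y(z)\in\g^0[[z]]$ with $y(0)=0$ and $e^{y(z)}\cdot\xi=\alpha(z)$, then take the required gauge transformation to be $y:=y(1)$. Using the Duhamel/Magnus formula, the condition $e^{y(z)}\cdot\xi=\alpha(z)$ is equivalent to $y(z)$ solving the ODE
\[
\phi(\ad y(z))\bigl(y'(z)\bigr)\;=\;-\beta(z),\qquad y(0)=0,
\]
where $\phi(x)=(e^{x}-1)/x$. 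Since $\phi(0)=1$, this inverts to $y'(z)=\phi(\ad y(z))^{-1}(-\beta(z))$, and I would solve it by Picard iteration in each nilpotent quotient $\g/F_n\g$; the solutions assemble compatibly into the desired $y(z)\in\g^0[[z]]$ in the inverse limit. To conclude, one observes that both $e^{y(z)}\cdot\xi$ and $\alpha(z)$ satisfy the first-order equation $\partial_z(\cdot)=d^{(\cdot)}\beta(z)$ with common initial value $\xi$, hence coincide by Picard uniqueness in each nilpotent quotient.

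\emph{Main obstacle.} The crux is the homotopy-to-gauge direction: identifying and solving the Duhamel-type ODE for $y(z)$ requires careful sign bookkeeping and a clean handling of the Bernoulli-like series $\phi$ together with its inverse. Pronilpotency of $\g$ is essential throughout for convergence of the exponentials and power series, for the invertibility of $\phi(\ad y(z))$ as an operator, and for the Picard uniqueness in the inverse limit.
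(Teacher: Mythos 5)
Your proposal is correct, and it goes beyond the paper's own proof. For the direction ``gauge equivalent $\Rightarrow$ homotopic'' your argument is the paper's argument made explicit: the paper's homotopy $e^{yz}\cdot\xi$ is the gauge action taken \emph{inside} the dgla $\g\otimes\ground[z,dz]$, and since $d(yz)=(dy)z+y\,dz$ while $(\ad y)^{n}(y)=0$ for $n\geq 1$ (as $[y,y]=0$), the $dz$-component of that element is exactly $-y\,dz$; so the paper's homotopy is literally your $h=e^{zy}\cdot\xi-y\,dz$, and your two checks (the pointwise MC equation plus the flow identity $\partial_z\alpha=-d^{\alpha(z)}y$) are precisely the two components of the statement that the gauge action preserves MC elements in the larger dgla. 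The genuine difference is the converse: the paper declares it ``less straightforward'', omits it, and cites \cite{CL'}, whereas you sketch the standard flow-integration proof --- split $h=\alpha(z)+\beta(z)\,dz$ into a family of MC elements together with $\partial_z\alpha=d^{\alpha(z)}\beta$, solve $\phi(\ad y(z))(y'(z))=-\beta(z)$ with $y(0)=0$, and identify $e^{y(z)}\cdot\xi$ with $\alpha(z)$ by uniqueness of solutions of $\partial_z\gamma=d^{\gamma}\beta$ with $\gamma(0)=\xi$. This is sound, and is essentially the argument of the cited source; what it buys is a self-contained proof of the substantive half that the paper leaves as a black box. Two small imprecisions are worth fixing: first, the condition $e^{y(z)}\cdot\xi=\alpha(z)$ is not \emph{equivalent} to your ODE (differentiating it only shows that $\phi(\ad y(z))(y'(z))+\beta(z)$ is $d^{\alpha(z)}$-closed), but the implication you actually use --- ODE plus uniqueness forces $e^{y(z)}\cdot\xi=\alpha(z)$ --- is the correct one; second, $y(z)$ should not be described as an element of $\g^0[[z]]$ evaluated at $z=1$, but rather as the compatible system of its images in the nilpotent quotients, where the Picard iteration stabilizes after finitely many steps to a \emph{polynomial} in $z$ (each iteration increases bracket length), so that $y(1)$ is well defined in the inverse limit --- consistent with the paper's convention that $\g\otimes\ground[z,dz]$ denotes the completed tensor product.
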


\begin{proof}
If $\eta = e^y\cdot\xi$ then $e^{yz}\cdot\xi$ is a homotopy from $\xi$ to $\eta$. The converse is less straightforward and omitted here. Instead see, for example, \cite{CL'}.
\end{proof}

Gauge equivalence is often more convenient to work with than homotopy equivalence.

Let $f\co\g\rightarrow \h$ be a map of pronilpotent dglas and an element $\xi_0\in\MC(\h)$. We wish to understand the moduli spaces of lifts of $\xi_0$ to an $\MC$ element in $\g$. This can be thought of as a general version of certain deformation theory problems.

\begin{rem}
We will not consider here the most general case, when there are obstructions to lifting. In other words we will assume that there exists at least one $\MC$ element $\xi\in \MC(\g)$ such that $f(\xi)=\xi_0$. This assumption can be easily dropped by working with \emph{curved} Lie algebras (for example, see \cite[Chapter 7]{braun}) but we will not need this here.
\end{rem}

\begin{prop}\label{prop:mcfibre}
Let $f\co\g\rightarrow \h$ be a map of dglas. Let $\xi_0\in\MC(\h)$ and $\xi\in\MC(\g)$ with $f(\xi)=\xi_0$. Let $\k^\xi\subset\g$ be the kernel of $f$, endowed with the twisted differential $d^\xi:=d+\ad(\xi)$.
The fibre over $\xi_0\in\MC(\h)$ of $\MC(\g)\rightarrow\MC(\h)$ is isomorphic to $\MC(\k^\xi)$ by the map $\MC(\k^{\xi})\subset\MC(\g^{\xi})\xrightarrow{\eta\mapsto\eta+\xi}\MC(\g)$. In particular it is independent of the choice of $\xi$.
\end{prop}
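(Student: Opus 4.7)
The plan is to reduce this to the earlier proposition (stating that $\eta \mapsto \eta + \xi$ defines a bijection $\MC(\g^\xi) \xrightarrow{\sim} \MC(\g)$ for any odd element $\xi$), and then simply cut out the preimage of the fibre over $\xi_0$.

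First I would check that $\k^\xi$ is well-defined as a sub-dgla of $\g^\xi$. The Lie bracket on $\g^\xi$ agrees with that on $\g$, and $\k = \ker f$ is closed under the original bracket since $f$ is a map of graded Lie algebras. For the twisted differential, if $\eta \in \k$ then
\[
f(d^\xi \eta) = f(d\eta) + [f(\xi), f(\eta)] = d f(\eta) + [\xi_0, 0] = 0,
\]
so $d^\xi$ restricts to $\k$. Since $f$ is a dgla map, we also have $f(\MC(\g^\xi)) \subseteq \MC(\h)$ under the corresponding twist, and in particular $\MC(\k^\xi) = \MC(\g^\xi) \cap \k$.

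Next I would invoke the earlier proposition: the map $\Phi\co \MC(\g^\xi) \to \MC(\g)$ sending $\eta \mapsto \eta + \xi$ is a bijection. Under $\Phi$ we have $f(\eta + \xi) = f(\eta) + \xi_0$, so $\Phi(\eta)$ lies in the fibre of $\MC(\g) \to \MC(\h)$ over $\xi_0$ if and only if $f(\eta) = 0$, i.e.\ $\eta \in \k$. Combined with the identification $\MC(\k^\xi) = \MC(\g^\xi) \cap \k$ from the previous step, this shows $\Phi$ restricts to a bijection from $\MC(\k^\xi)$ onto the fibre over $\xi_0$, which is exactly the content of the statement.

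For the independence of the choice of $\xi$, let $\xi' \in \MC(\g)$ be a second lift of $\xi_0$. The construction provides two bijections $\MC(\k^\xi) \xrightarrow{\sim} f^{-1}(\xi_0) \xleftarrow{\sim} \MC(\k^{\xi'})$, whose composite is concretely $\eta \mapsto \eta + \xi - \xi'$; this is well-defined since $f(\xi - \xi') = 0$, so $\xi - \xi' \in \k$, and it is an isomorphism of $\MC$ sets by construction. There is no real obstacle here: the entire argument is essentially bookkeeping, and the only place one has to be slightly careful is verifying that the twist $d^\xi$ genuinely preserves $\k$, which uses that $f$ commutes with differentials and respects the bracket.
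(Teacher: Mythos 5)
Your proposal is correct and follows essentially the same route as the paper's proof: both rest on the translation bijection $\eta\mapsto\eta+\xi$ between $\MC(\g^\xi)$ and $\MC(\g)$, together with the observation that $f(\eta+\xi)=f(\eta)+\xi_0$ equals $\xi_0$ precisely when $\eta\in\k=\Ker f$. The extra details you supply (that $d^\xi$ preserves $\Ker f$, and the explicit comparison $\eta\mapsto\eta+\xi-\xi'$ giving independence of the chosen lift) are exactly the points the paper dismisses as elementary.
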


\begin{proof}
It is elementary to check that an odd element $\eta$ in ${\k^{\xi}}$ is MC if and only if $\eta + \xi\in\MC(\g)$. Moreover, $f(\eta+\xi) = f(\eta) + \xi_0 = \xi_0$ so $\eta+\xi$ indeed belongs to the fibre of $f$ over $\xi_0$.
\end{proof}

\begin{rem}
Note that, despite what the notation may suggest, $\k^{\xi}$ is \emph{not} necessarily obtained by twisting $\k$ by some element of $\k$.
\end{rem}

\begin{prop}\label{prop:gaugeequivoffibres}
Let $f\co\g\twoheadrightarrow \h$ be a surjective map of pronilpotent dglas. The fibres over any two gauge equivalent (and hence homotopy equivalent) elements $\xi_0, \xi_0'\in\MC(\h)$ are isomorphic.
\end{prop}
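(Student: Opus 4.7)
The plan is to directly transport the fibre over $\xi_0$ to the fibre over $\xi_0'$ by means of a lifted gauge transformation, using the compatibility of the gauge action with dgla maps.

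First I would write the hypothesis of gauge equivalence explicitly: there exists $y_0\in\h^0$ with $\xi_0'=e^{y_0}\cdot\xi_0$. Since $f\co\g\twoheadrightarrow\h$ is a surjective map of pronilpotent dglas, in particular $f\co\g^0\to\h^0$ is surjective, so I can pick a lift $y\in\g^0$ with $f(y)=y_0$. The key observation is that any dgla map intertwines gauge actions: for $\eta\in\g$ and $\xi\in\MC(\g)$ one has
\[
f(e^\eta\cdot\xi)=f\!\left(e^{\ad(\eta)}\xi+(de^{\ad(\eta)})\eta\right)=e^{\ad(f(\eta))}f(\xi)+(de^{\ad(f(\eta))})f(\eta)=e^{f(\eta)}\cdot f(\xi),
\]
because $f$ commutes with $d$ and with the adjoint representation. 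This is purely formal from the Baker--Campbell--Hausdorff definitions.

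Now consider the map $\Phi\co\MC(\g)\to\MC(\g)$ given by $\xi\mapsto e^y\cdot\xi$; it is a bijection with inverse $\xi\mapsto e^{-y}\cdot\xi$ (this is a group action of $\exp(\g^0)$ on $\MC(\g)$, recalled earlier in Appendix~\ref{app:mc}). For any $\xi\in f^{-1}(\xi_0)\cap\MC(\g)$ the computation above shows $f(\Phi(\xi))=e^{y_0}\cdot\xi_0=\xi_0'$, so $\Phi$ restricts to a well-defined map from the fibre over $\xi_0$ to the fibre over $\xi_0'$. Symmetrically, $\xi\mapsto e^{-y}\cdot\xi$ sends the fibre over $\xi_0'$ to the fibre over $\xi_0$, and these are mutually inverse since $\Phi$ is a group action. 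Hence the two fibres are in bijection.

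The argument is essentially formal, so there is no substantial obstacle; the only thing to verify carefully is that $f$ is surjective on degree zero (immediate from the hypothesis) and the intertwining identity $f(e^y\cdot\xi)=e^{f(y)}\cdot f(\xi)$, both of which are direct consequences of $f$ being a dgla morphism between pronilpotent dglas. No appeal to homotopy, only to gauge equivalence, is needed, which justifies the parenthetical remark in the statement via the Schlessinger--Stasheff theorem (Theorem~\ref{thm:ssthm}).
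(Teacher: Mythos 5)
Your proof is correct and follows essentially the same route as the paper: lift the gauge parameter $y_0$ to $y\in\g^0$ using surjectivity, and transport one fibre to the other via the gauge action of $e^y$, which $f$ intertwines. The only cosmetic difference is that the paper phrases the fibres as $\MC(\k^\xi)$ and $\MC(\k^{e^y\cdot\xi})$ via Proposition~\ref{prop:mcfibre}, whereas you work with them directly as subsets of $\MC(\g)$.
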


\begin{proof}
There is a degree zero element $y_0\in\h$ such that $e^{y_0}\cdot \xi_0 = \xi_0'$. Choose elements $\xi,y\in\g$ with $f(\xi)=\xi_0$ and $f(y)=y_0$ so that $\MC(\k^{\xi})$ is the fibre over $\xi_0$ and $\MC(\k^{e^{y}\cdot\xi})$ is the fibre over $\xi_0'$ (since $f(e^y\cdot\xi) = e^{y_0}\cdot\xi_0 = \xi_0'$). Given $\eta\in\MC(\k^{\xi})$ set $g(\eta) = e^{y}\cdot \eta$. Then $g(\eta)\in \MC(\k^{e^{y}\cdot\xi})$ since $f(e^y\cdot\eta + e^y\cdot\xi)= \xi_0'$. This gives an isomorphism $g\co\MC(\k^{\xi})\rightarrow\MC(\k^{e^y\cdot\xi})$ as required.
\end{proof}

We now wish to understand the space of lifts \emph{up to homotopy}, or equivalently the fibre in $\MCmod(\g)$ over $\xi_0\in\MCmod(\h)$. From now on it will be assumed that $f\co\g\twoheadrightarrow\h$ is a surjective map between pronilpotent dglas, $\xi_0\in(\h)$ will be a fixed MC element and $\xi\in \MC(\g)$ will be a given lift of $\xi_0$ so that $f(\xi)=\xi_0$.

There are two natural ways of speaking about equivalence of lifts of $\xi_0$. We could of course say that $\eta,\eta'\in\MC(\k^\xi)$ are equivalent if they are equivalent as Maurer--Cartan elements in the dgla $\k^\xi$. In terms of Sullivan homotopy this means that there is an element $h\in\MC(\k^\xi[z,dz])$ with $h|_0 = \eta$ and $h|_1 = \eta'$. In particular at every value for $z$ it is the case that $h$ is an element of $\MC(\k^\xi)$. In other words this is a homotopy through lifts of $\xi_0$, or more precisely, this homotopy is a lift of the constant homotopy $\xi_0\in \MC(\h[z,dz])$. Therefore, the space of lifts of $\xi_0$ modulo equivalence in this sense is then just $\MCmod(\k^\xi)$ and does not depend on the choice of $\xi$. Note that if $\xi_0'=e^{y_0}\cdot\xi_0$ then given $y\in\g$ with $f(y)=y_0$ and a homotopy $h\in\MC(\k^{\xi}[z,dz])$ with $h|_0 = \eta$ and $h|_1 = \eta'$ then $e^y\cdot h\in\MC(\k^{e^y\cdot\xi}[z,dz])$ is a homotopy from $e^y\cdot\eta$ to $e^y\cdot\eta'$ so by Proposition~\ref{prop:gaugeequivoffibres} gauge equivalent elements have the same space of lifts up to homotopy in this sense. Therefore the space of lifts of up to homotopy in this sense is well defined for a homotopy class $\xi_0\in\MCmod(\h)$.

Alternatively we could say that $\eta,\eta'\in\MC(\k^\xi)$ are equivalent if they are equivalent as Maurer--Cartan elements in $\g$. That is, the elements $\eta+\xi, \eta'+\xi\in\MC(\g)$ are equivalent as Maurer--Cartan elements. In general this will not be the same since two elements may now be homotopic via a homotopy not necessarily through lifts of $\xi_0$. The space of lifts up to homotopy in this sense is just the fibre in $\MCmod(\g)$ over $\xi_0\in\MCmod(\h)$. These two different notions are related as follows.

\begin{theorem}\label{thm:fibration}
Let $f\co\g\twoheadrightarrow \h$ be a surjective map of pronilpotent dglas. Then $\exp(H^{0}(\h^{\xi_0}))$ acts on $\MCmod(\k^\xi)$ and $\MCmod(\k^\xi)/\exp(H^{0}(\h^{\xi_0}))$ is isomorphic to the fibre in $\MCmod(\g)$ over $\xi_0\in\MCmod(\h)$.
\end{theorem}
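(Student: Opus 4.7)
First I would define $\Phi\co\MCmod(\k^\xi)\to\pi^{-1}([\xi_0])$ by $\Phi([\eta])=[\eta+\xi]$, where $\pi\co\MCmod(\g)\to\MCmod(\h)$ is the induced map. This is well-defined on the set-theoretic level by Proposition~\ref{prop:mcfibre}, and on the quotient because the inclusion of dglas $\k^\xi\hookrightarrow\g^\xi$ carries $\k^\xi$-gauge equivalences to $\g$-gauge equivalences of the corresponding shifted MC elements. For surjectivity, given $[\mu]\in\pi^{-1}([\xi_0])$, write $f(\mu)=e^{y_0}\cdot\xi_0$, lift $y_0$ to $y\in\g^0$ using surjectivity of $f$, and observe that $e^{-y}\cdot\mu$ is $\g$-gauge equivalent to $\mu$ and satisfies $f(e^{-y}\cdot\mu)=\xi_0$; by Proposition~\ref{prop:mcfibre} it has the form $\eta+\xi$ with $\eta\in\MC(\k^\xi)$, so $\Phi([\eta])=[\mu]$.

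Next I would construct the action. For $y_0\in\h^0$ with $d^{\xi_0}y_0=0$, choose any lift $y\in\g^0$ and set $[y_0]\cdot[\eta]:=[e^y\cdot(\eta+\xi)-\xi]$. The target lies in $\MC(\k^\xi)$ because Proposition~\ref{prop:stabcycles} gives $e^{y_0}\cdot\xi_0=\xi_0$, whence $f(e^y\cdot(\eta+\xi))=\xi_0$. Well-definedness splits into three parts: (a)~independence of the lift (two lifts differ by an element of $\k^0$ via the functoriality of the BCH product, and a direct inspection of the gauge action formula shows that the $\k^0$-gauge action in $\g$ on elements of $\MC(\k^\xi)+\xi$ agrees with the $\k^\xi$-gauge action under the translation $\eta\mapsto\eta+\xi$); (b)~independence of the representative of $[\eta]$ (if $\eta'=e^w\cdot_{\k^\xi}\eta$ with $w\in\k^0$, take the lift $y\bullet w$ of $y_0$ instead of $y$ and reduce to (a)); (c)~independence of the cohomology class of $[y_0]$. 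Compatibility with the group product is immediate from associativity of the gauge action.

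Step (c) is the main obstacle. Suppose $y_0=d^{\xi_0}u$ with $u\in\h^{-1}$; I need $[y_0]\cdot[\eta]=[\eta]$. The key trick is to choose the lift cleverly: pick any lift $v\in\g^{-1}$ of $u$ and take $y:=d^{\eta+\xi}v=d^\xi v+[\eta,v]$, which still lifts $y_0$ because $f(\eta)=0$ gives $f(d^{\eta+\xi}v)=d^{\xi_0}u=y_0$. Since $\eta+\xi\in\MC(\g)$, the element $d^{\eta+\xi}v$ is $d^{\eta+\xi}$-closed, so applying Proposition~\ref{prop:stabcycles} to the MC element $\eta+\xi$ yields $e^{d^{\eta+\xi}v}\cdot(\eta+\xi)=\eta+\xi$, and hence $[y_0]\cdot[\eta]=[\eta]$ on the nose. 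Combined with (a), the action of any exact class is trivial, so the action descends to $\exp(H^0(\h^{\xi_0}))$; here one checks that the space of $d^{\xi_0}$-boundaries is a Lie ideal in the space of $d^{\xi_0}$-cocycles (using the derivation property of $d^{\xi_0}$), so that the exponential of the former is a normal subgroup of the exponential of the latter in the pronilpotent setting.

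Finally, I would show the orbits of this action are exactly the fibres of $\Phi$. Forward direction: if $[y_0]\cdot[\eta]=[\eta']$, then $e^y\cdot(\eta+\xi)=\eta'+\xi$ exhibits a $\g$-gauge equivalence, so $\Phi([\eta])=\Phi([\eta'])$. Converse: if $\Phi([\eta])=\Phi([\eta'])$, choose $y\in\g^0$ with $e^y\cdot(\eta+\xi)=\eta'+\xi$; applying $f$ shows $e^{f(y)}$ stabilises $\xi_0$, so $d^{\xi_0}f(y)=0$ by Proposition~\ref{prop:stabcycles}, and by construction $[f(y)]\cdot[\eta]=[\eta']$. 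This completes the identification of $\MCmod(\k^\xi)/\exp(H^0(\h^{\xi_0}))$ with the fibre of $\pi$ over $[\xi_0]$.
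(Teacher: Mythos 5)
Your proof is correct, and it sets up exactly the same action as the paper's proof --- $e^{y_0}\star[\eta]=[e^{y}\cdot(\eta+\xi)-\xi]$ for a lift $y\in\g^0$ of a $d^{\xi_0}$-cocycle $y_0$ --- but the well-definedness verifications, which are the real substance of the theorem, are carried out by a genuinely different method. The paper proves both independence of the lift and invariance under changing $y_0$ by a coboundary with explicit Sullivan homotopies in $\MC(\g[z,dz])$: namely $h=e^{y'z}\cdot e^{-yz}\cdot e^{y}\cdot(\eta+\xi)$ for two lifts $y,y'$ of $y_0$, and $h=e^{y+(d+d^{\xi})(bz)}\cdot(\eta+\xi)$ when the cocycle is changed to $x_0=y_0+(d+d^{\xi_0})b_0$, using Proposition~\ref{prop:stabcycles} each time to check that $f(h)=\xi_0$, so that the homotopy runs through lifts of $\xi_0$ and hence lives in $\MC(\k^\xi[z,dz])$. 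You instead work entirely inside the gauge groups (licensed by Theorem~\ref{thm:ssthm}): your step (a) rests on the BCH factorisation $y'=w\bullet y$ with $w\in\k^0$ together with the observation --- correct, and a useful lemma in its own right --- that the $\g$-gauge action of $\exp(\k^0)$ on $\xi+\MC(\k^\xi)$ agrees with the intrinsic $\k^\xi$-gauge action under translation by $\xi$; and your step (c) replaces the paper's second homotopy by the clever choice of lift $y=d^{\eta+\xi}v$ of the exact cocycle $d^{\xi_0}u$, which is a $d^{\eta+\xi}$-cocycle and hence, by Proposition~\ref{prop:stabcycles}, stabilises $\eta+\xi$, so that exact classes act trivially on the nose. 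What your route buys: it is more elementary (no computations over $\ground[z,dz]$), and it spells out the points the paper compresses into its final sentence --- surjectivity of $[\eta]\mapsto[\eta+\xi]$ onto the fibre, the matching of orbits with fibres, and the descent of the action from the exponential of cocycles to $\exp(H^{0}(\h^{\xi_0}))$ via normality of the exponential of coboundaries. What the paper's route buys: by constructing homotopies directly it keeps the well-definedness steps independent of the Schlessinger--Stasheff theorem, although that theorem is needed in either treatment to identify $\MCmod$ with the gauge quotient when comparing orbits with the fibre of $\MCmod(\g)\to\MCmod(\h)$.
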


\begin{proof}
Given $y_0\in\h^0$ such that $(d+d^{\xi_0})y_0=0$ and $y,y'\in\g^0$ with $f(y)=f(y')=y_0$ then for any $\eta\in\MC(\k^\xi)$ let
\[h=e^{y'z}\cdot e^{-yz}\cdot e^{y}\cdot(\eta+\xi)\]
so that $h\in\MC(\g[z,dz])$ is a homotopy from $e^y\cdot(\eta+\xi)$ to $e^{y'}\cdot(\eta+\xi)$. Then $f(h)=e^{y_0}\cdot \xi_0 = \xi_0$ by Proposition~\ref{prop:stabcycles} so $h$ is a homotopy through lifts of $\xi_0$ and so $e^y\cdot(\eta+\xi) - \xi$ and $e^{y'}\cdot(\eta + \xi) - \xi$ are homotopy equivalent as elements in $\MC(\k^\xi)$. Therefore this gives a well defined action $e^{y_0}\star\eta = e^y\cdot(\eta+\xi)-\xi$ on $\MCmod(\k^\xi)$ for cycles in $(\h^{\xi_0})^0$. Furthermore given $x_0\in\h^0$ such that $x_0 = y_0 + (d+d^{\xi_0})b_0$ for some $b_0\in\h^{-1}$ and $b\in\g^{-1}$ with $f(b)=b_0$ set $x=y+(d+d^{\xi})b$ so that $f(x)=x_0$. Set
\[h = e^{y + (d + d^\xi)(bz)}\cdot (\eta+\xi)\]
for $\eta\in\MC(\k^\xi)$ and then $h\in\MC(\g[z,dz])$ so $h$ gives a homotopy from $e^y\cdot(\eta+\xi)$ to $e^x\cdot(\eta+\xi)$. Since $y_0+(d+d^{\xi_0})(b_0z)$ is a cycle in $\h^{\xi_0}[z,dz]$ then again by Proposition~\ref{prop:stabcycles} $f(h) = \xi_0$ so $h$ is a homotopy through lifts of $\xi_0$ and hence $e^{y_0}\star\eta$ and $e^{x_0}\star\eta$ are equivalent as elements in $\MC(\k^\xi)$. Therefore $\star$ descends to a well defined action of $\exp(H^0(h^{\xi_0}))$ on $\MCmod(\k^\xi)$.

That $\MCmod(\k^\xi)/\exp(H^0(\h^{\xi_0}))$ is isomorphic to the fibre in $\MCmod(\g)$ over $\xi_0\in\MCmod(\h)$ follows from how the action of $\exp(H^0(\h^{\xi_0}))$ was defined above together with Proposition~\ref{prop:stabcycles}.
\end{proof}

\begin{rem}\label{rem:conceptualfibration}
There is perhaps a more conceptual, albeit less elementary, way to understand Theorem~\ref{thm:fibration} from a topological viewpoint. Let $\g$ and $\h$ be complete dglas (by which we mean an inverse limit of \emph{finite dimensional} nilpotent dglas). Then a fibration of $\g\twoheadrightarrow \h$ induces a fibration $\MC_\bullet(\g)\twoheadrightarrow \MC_\bullet(\h)$ where $\MC_\bullet$ is the \emph{Maurer--Cartan simplicial set}. Indeed one way to see this is as follows. The model category of cdgas is almost a simplicial model category, for example using the results of \cite{hin}, in particular it satisfies the corner axiom: Given a cofibration $i\co A\rightarrow B$ and a fibration $p\co X\rightarrow Y$ the induced map
\[(i^*,p_*)\co \Hom(B,X)_\bullet \rightarrow \Hom(A,X)_\bullet\times_{\Hom(A,Y)_\bullet}\Hom(B,Y)_\bullet\]
is a fibration of simplicial sets. Next, a fibration $g\twoheadrightarrow \h$ induces a cofibration $\CE(\h)\rightarrowtail\CE(\g)$ of cdgas ($\g\mapsto \CE(\g)$ is a contravariant Quillen functor) and hence by the corner axiom induces a fibration $\Hom(\CE(\g),\ground)_\bullet\twoheadrightarrow \Hom(\CE(\h),\ground)_\bullet$. But for any complete dgla $\g$ then $\MC_\bullet(\g)$ is precisely $\Hom(\CE(\g),\ground)_\bullet$ and this is the required fibration of Maurer--Cartan simplicial sets.

Now a version of Theorem~\ref{thm:fibration} can be obtained, at least for $\g$ and $\h$ complete dglas over $\mathbb{Q}$ of finite type, by considering the long exact sequence in homotopy of this fibration together with the facts that $\pi_0\MC_\bullet(\g)=\MCmod(\g)$ and $H^0(\g^\xi)$ is $\pi_1$ of the connected component of $\MC_\bullet(\g)$ containing $\xi$. In particular, from this point of view the standard picture in Figure~\ref{fig:mcfibre} now becomes quite enlightening to keep in mind.
\end{rem}

\begin{figure}[ht!]
\centering
{\footnotesize
\[
\begin{xy}
*\xybox{+(0,7),{\ellipse va(210),_,va(-30){-}};
+(0,14),{\ellipse va(210),_,va(-30){-}};
+(0,14),{\ellipse va(210),_,va(-30){-}};}
!UC+(0,5)*\xybox{\ellipse(8,3){-}}="centery"
-(0,45)*\xybox{\ellipse(8,3){-}}+(0,5);p+(0,8);**\dir{-}*\dir2{>}
-(4,10)*{\xi_0}="xi",
+(0,2.4)*\dir{*};
p+(0,19)*\dir{*}="1",
+(0,7)*\dir{*}="2",
+(0,7)*\dir{*}="3",
+(0,12)*\dir{*}="4",
+(0,7),
**\dir{.},
"xi"+(24,5)*{\MC_\bullet(\h)}
+(0,30)*{\MC_\bullet(\g)}
-(40,0)*{\MC_\bullet(\k^{\xi})}+(6,0)="k";
"1"-(1.5,0.5)**\crv{~**\dir{.} "k"-(0,15)}?(1)*\dir{>},
"2"-(3.5,-0.5)**\crv{~**\dir{.} "k"-(-2,5)}?(1)*\dir{>},
"3"-(3.5,-0.7)**\crv{~**\dir{.} "k"+(2,2)}?(1)*\dir{>},
"4"-(1.5,0.5)**\crv{~**\dir{.} "k"+(0,10)}?(1)*\dir{>},
\end{xy}
\]}
\caption{In this standard picture of a fibration $\MCmod(\h)$ has one element, $\MCmod(\g)$ has two elements and $\MCmod(\k^{\xi})$ has four elements, although the fibre in $\MCmod(\g)$ over $\xi_0$ has two elements.}
\label{fig:mcfibre}
\end{figure}
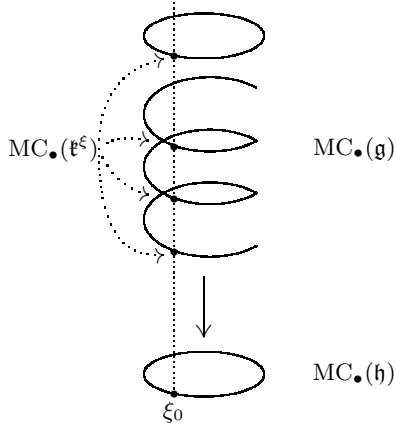

\section{Minimal models of operadic algebras}\label{app:operads}
In this section we give a treatment of minimal models of algebras over operads and cyclic operads.
Our references for the theory of dg operads are \cite{GiK, GeK} and we will be freely using results and terminology from these sources.

Let $\O$ be a dg operad which we assume to be non-unital and admissible; i.e.~$\O(n)=0, n=0,1$ and all spaces $\O(n)$ are finite dimensional for $n=2,3,\dots$. We will denote by $\B\O$ the cobar construction of $\O$. If $\O$ is quadratic, we denote by $\O^!$ the quadratic dual to $\O$ and by $\Lambda\O$ the operadic suspension of $\O$. There is a natural map $p_\O\co\B\Lambda\O^!\twoheadrightarrow \O$ which is a quasi-isomorphism in the case when $\O$ is Koszul. If $\O$ is a cyclic operad then $\B\O$ and $\Lambda\O^!$ will be anti-cyclic operads and vice-versa.  In the case that $\O$ is cyclic then $\Lambda \O$ is anti-cyclic and vice-versa; $\Lambda$ is an equivalence of categories between cyclic and anti-cyclic operads. The tensor product of two operads $\O_1$ and $\O_2$ is defined component-wise so that $\O_1\otimes \O_2(n)=\O_1(n)\otimes\O_2(n)$. The tensor product of two cyclic operads is a cyclic operad, the tensor product of two anti-cyclic operads is a cyclic operad and the tensor product of an anti-cyclic and a cyclic operad is an anti-cyclic operad.

The category of reduced operads has the structure of a closed model category, cf.~\cite{BeM} where weak equivalences are component-wise quasi-isomorphisms and fibration are component-wise surjections. Similarly, there is closed model category structure on the category of reduced cyclic operads obtained in \cite{luc}.

For a dg vector space $V$ we denote by $\E(V)$ the \emph{endomorphism operad} of $V$, so that $\E(V)(n)=\Hom(V^{\otimes n}, V), n=2,3,\dots$. In the case when $V$ has a non-degenerate inner product, the operad $\E(V)$ is cyclic. If the non-degenerate inner product on $V$ is even then we define the \emph{cyclic endomorphism operad} of $V$ to be the cyclic operad $\Ecyc(V)=\E(V)$. If the non-degenerate inner product on $V$ is odd then we define the \emph{anti-cyclic endomorphism operad} of $V$ to be the anti-cyclic operad $\Ecyc(V)=\Lambda \E(V)$. An algebra over an operad $\O$ is a dg vector space $V$ and an operad map $\O\to\E(V)$. An algebra over a cyclic operad $\O$ is a dg vector space $V$ with a non-degenerate even inner product and a cyclic operad map $\O\to\Ecyc(V)$. Similarly, an algebra over an anti-cyclic operad $\O$ is a dg vector space $V$ with a non-degenerate odd inner product and an anti-cyclic operad map $\O\to \Ecyc(V)$.
The operad governing commutative algebras is denoted by $\Com$; thus $\Com(n)=\ground$ for $n>1$ and $\Com(n)=0$ for $n=0,1$. The algebras of the cofibrant operad $\B\Com$ are $L_\infty$~algebras (introduced in Section~\ref{sec:linfty} using a different approach). The operad $\Com$ is cyclic and algebras over the cofibrant anti-cyclic operad $\B\Com$ are odd cyclic $L_\infty$~algebras.

\subsection{Weakly non-degenerate bilinear forms and endomorphism operads}
We will need a certain extension of the notion of a cyclic algebra to the case of a possibly degenerate inner product. Let $V$ be a dg vector space together with a symmetric bilinear form $\langle,\rangle$, possibly degenerate and $\O$ be a (anti\nobreakdash-)cyclic operad. In this situation it still makes sense to consider a (anti\nobreakdash-)cyclic $\O$--algebra structure on $V$ by imposing a suitable (anti\nobreakdash-)cyclic invariance condition on $\langle, \rangle$, cf, \cite[Section 4]{GeK}. In the case when $\langle, \rangle$ is non-degenerate, this notion is equivalent to a map of (anti\nobreakdash-)cyclic operads $\O\to\Ecyc(V)$ as above. In the degenerate case the endomorphism operad $\E(V)$ is not cyclic, so the definition of the (anti\nobreakdash-)cyclic endomorphism operad given above no longer makes sense. However we will see that there still exists a suitable analogue of the (anti\nobreakdash-)cyclic endomorphism operad under the condition that $\langle, \rangle$ is \emph{weakly non-degenerate}, i.e.~that it induces an injective map $V\hookrightarrow V^*$ (note that in the case $\dim V$ is finite this notion coincides with the usual notion of non-degeneracy).

The map $V\to V^*$ induced by $\langle,\rangle$ gives rise to maps \[\alpha_n\co\Hom(V^{\otimes n},V)\cong V^{*\otimes n}\otimes V\to V^{*\otimes n+1}.\]
Note that in the case when $\langle,\rangle$ is weakly non-degenerate the maps $\alpha_n$ are injective. To alleviate notation, we will suppress the subscript $n$ from now on.
\begin{defi}\label{def:cyclicendomorphism}
Let $V$ be a dg vector space with a weakly non-degenerate inner product $\langle,\rangle$. Let $\E_c(V)(n)\subset\E(V)(n)$ be the subspace consisting of such multi-linear maps $f\in \Hom(V^{\otimes n},V)$ that for any $\sigma\in S_{n+1}$ the element $\sigma(\alpha(f))$ lies in the image of $\alpha$. The operad structure maps in $\E_c(V)$ are determined by requiring that it be a suboperad of $\E(V)$ and the action of $S_{n+1}$ on $\E_c(V)$ is given by the inclusion $\E_c(V)(n)\hookrightarrow V^{*\otimes n+1}$.
\end{defi}
\begin{prop}
Definition~\ref{def:cyclicendomorphism} makes $\E_c(V)$ into a cyclic operad.
\end{prop}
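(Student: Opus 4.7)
The plan is to verify three things: (a) the declared $S_{n+1}$--action on $\E_c(V)(n)$ is well-defined and extends the usual $S_n$--action on $\E(V)(n)$; (b) $\E_c(V)$ is closed under the operadic composition $\circ_i$ inherited from $\E(V)$; and (c) the cyclic-operad axioms relating $\circ_i$ to the $S_{n+1}$--action are satisfied. For (a), weak non-degeneracy makes $\alpha_n$ injective, so for any $f\in\E_c(V)(n)$ and $\sigma\in S_{n+1}$ the element $\sigma\alpha_n(f)\in\mathrm{im}(\alpha_n)$ uniquely determines $\sigma\cdot f\in\E(V)(n)$. This element still lies in $\E_c(V)(n)$ because the defining condition is itself $S_{n+1}$--invariant (for any $\tau\in S_{n+1}$, $\tau\alpha_n(\sigma\cdot f)=\tau\sigma\alpha_n(f)\in\mathrm{im}(\alpha_n)$), and the group axioms follow from the action on $V^{*\otimes(n+1)}$; restricting to the subgroup $S_n\subset S_{n+1}$ fixing the last factor recovers the ordinary permutation action on $\Hom(V^{\otimes n},V)$.

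For (b) and (c), the strategy is to reduce to the classical cyclic-operad axioms for $\E(V)$ in the non-degenerate setting. The core is a universal ``cyclic composition formula'' at the level of $V^{*\otimes *}$: $\alpha_{m+n-1}(f\circ_i g)$ is the partial contraction of $\alpha_m(f)$ and $\alpha_n(g)$ at the $i$-th slot of $f$ against the distinguished (output) slot of $g$, using only the evaluation pairing $V^*\otimes V\to\ground$ together with the injection $\iota\co V\hookrightarrow V^*$, but never requiring an inverse. This identity is therefore valid in the weakly non-degenerate case. The classical cyclic-operad axioms (as in \cite{GeK}) encode how the $S_{m+n}$--action on $\alpha(f\circ_i g)$ re-expresses as partial contractions of $\sigma'\alpha(f)$ and $\sigma''\alpha(g)$ for appropriate $\sigma'\in S_{m+1}$ and $\sigma''\in S_{n+1}$. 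Given $f,g\in\E_c(V)$, each rearrangement $\sigma'\alpha(f),\sigma''\alpha(g)$ lies in $\mathrm{im}(\alpha)$ by definition, so their partial contraction lies in $\mathrm{im}(\alpha_{m+n-1})$, yielding closure under $\circ_i$ (i.e.~(b)); the axioms (c) themselves then lift from $V^{*\otimes *}$ to $\E_c(V)$ by injectivity of $\alpha$.

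The main obstacle is ensuring that every contraction used in the classical proof only ever pairs a $V^*$--factor against an $\iota(V)$--factor, so that no inverse to $\iota$ is needed. This is automatic in the base identity, where the output slot of $g$ already sits in $\iota(V)$. For the cyclic moves entering axiom (c), the requirement is exactly the defining property of $\E_c(V)$: every cyclic rearrangement of $\alpha(f)$ or $\alpha(g)$ places an $\iota(V)$--element at the slot that is about to be contracted. Consequently the whole verification reduces to the standard computation of \cite{GeK}, re-read with $\alpha_n$ in place of the identification $\Hom(V^{\otimes n},V)\cong V^{*\otimes(n+1)}$ that is available only in the non-degenerate, finite-dimensional case.
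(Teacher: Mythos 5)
Your proposal is correct and follows essentially the same route as the paper's proof: both transfer the verification through the injective map $\alpha$, use the defining $S_{n+1}$--stability of $\E_c(V)$ to ensure every rearranged tensor stays in $\im(\alpha)$, and reduce the cyclic axioms to the compatibility $(f\circ_n g)^* = g^*\circ_1 f^*$ together with closure under $\circ_i$. The only difference is one of explicitness: where you defer to the standard Getzler--Kapranov computation re-read through $\alpha$ (checking that only evaluation pairings, never an inverse of $V\hookrightarrow V^*$, are used), the paper writes out the single identity $(\alpha(f\circ_n g))^* = \alpha(g^*\circ_1 f^*)$ element-wise and deduces both conditions from it and the injectivity of $\alpha$.
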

\begin{proof}
Given an element $h\in V^{*\otimes n+1}$ let $h\mapsto h^*$ be the action of the cycle $(1\, 2\, \dots \, n+1)$ on $V^{*\otimes n+1}$. Given an element $f\in \E_c(V)(n)$ denote similarly by $f\mapsto f^*$ the result of the action of this cycle on $E_c(V)(n)$. To show that $E_c(V)$ is a cyclic operad we must show the following two conditions are satisfied for all $f\in \E_c(V)(n)$ and $g\in E_c(V)(m)$:
\begin{enumerate}
\item It holds that $f\circ_i g \in \E_c(V)(n+m-1), i=1,\dots, m$, or equivalently $(\alpha(f\circ_i g))^*\in \im(\alpha)$.
\item It holds that $(f \circ_n g)^* = g^* \circ_1 f^*$.
\end{enumerate}
 We will show that $(\alpha(f\circ_n g))^* = \alpha(g^*\circ_1 f^*)$. This identity clearly implies condition (1) above. Furthermore, since $\alpha$ is injective, it also then implies condition (2) above. Let $v_1,\dots, v_{n+m}\in V$. For simplicity we assume that the elements $f,g,v_i$ are even; the proof in the general case is slightly messier because of the signs. Then we have
 \begin{align*}
 (\alpha(f\circ_n g))^*(v_1,\dots, v_{n+m}) & = \alpha(f\circ_n g)(v_2,\dots, v_{n+m}, v_1) \\
 &= \langle (f\circ_n g)(v_2,\dots , v_{n+m}), v_1 \rangle\\
 &= \langle f(v_2,\dots v_n, g(v_{n+1},\dots, v_{n+m})), v_1 \rangle \\
 &= \langle f^*(v_1,\dots, v_n), g(v_{n+1},\dots, v_{n+m}) \rangle\\
 & = \langle g(v_{n+1},\dots, v_{n+m}),f^*(v_1,\dots, v_n) \rangle \\
 & = \langle g^*(f^*(v_1,\dots, v_n), v_{n+1},\dots, v_{n+m-1}), v_{n+m} \rangle\\
 & = \alpha(g^* \circ_1 f^*)(v_1,\dots , v_{n+m})
 \end{align*}
 as required.
\end{proof}
\begin{rem}
In the case when $V$ is finite dimensional, the operad $\E_c(V)$ is clearly isomorphic to $\E(V)$. In the infinite dimensional case it is strictly smaller; e.g.~if $V$ has an orthonormal basis, then $E(V)(1)$ can be represented as the space of matrices having columns of finite length whereas $\E_c(V)$ consists of matrices having both columns and rows of finite length. In that case the action of $S_2$ is given by transposition of matrices.
\end{rem}

Let $V$ be a dg vector space with a weakly non-degenerate bilinear form so that $E_c(V)$ is a cyclic operad. As before, if the bilinear form is even then we define the \emph{cyclic endomorphism operad} of $V$ to be the cyclic operad $\Ecyc(V)=\E_c(V)$. If the bilinear form is odd then we define the \emph{anti-cyclic endomorphism operad} of $V$ to be the anti-cyclic operad $\Ecyc(V)=\Lambda \E_c(V)$. Let $\O$ be a (anti\nobreakdash-)cyclic operad. It is now straightforward to see that an (anti\nobreakdash-)cyclic $\O$--algebra on $V$ in the sense of \cite[Section 4]{GeK} is the same as a map of (anti\nobreakdash-)cyclic operads $\O\to \Ecyc(V)$.

From now on, by an algebra over a cyclic operad $\O$ we will mean a dg vector space $V$ with a \emph{weakly} non-degenerate even bilinear form and a cyclic operad map $\O\to \Ecyc(V)$. Similarly an algebra over an anti-cyclic operad $\O$ will from now on mean a dg vector space $V$ with a \emph{weakly} non-degenerate odd bilinear form and an anti-cyclic operad map $\O\to \Ecyc(V)$.

\begin{rem}
Another possible definition of a cyclic endomorphism operad for an arbitrary symmetric bilinear form $\langle,\rangle$ is given as a collection of dg vector spaces $\{V^{\otimes n}\}, n=1,2,\ldots$. The operad structure is then defined via the contracting of tensors using $\langle,\rangle$. However, if we had defined the notion of an $\O$--algebra using this definition for the cyclic endomorphism operad, it is not clear that there are any interesting examples of $\O$--algebras when $V$ is not finite dimensional (and in the finite dimensional case it agrees with our definition). Therefore, while it is likely that one can obtain a minimal model theorem for this version of a cyclic endomorphism operad, the utility of this result is unclear in the absence of interesting examples.
\end{rem}

\subsection{Homotopy equivalence and minimal models}
We will now introduce a notion of \emph{homotopy equivalence} of operadic algebras and prove a version of the `minimal model theorem' for algebras over operads and (anti\nobreakdash-)cyclic operads.

In order to define our notion of homotopy equivalence of two operadic algebras we will need to understand when maps between dg vector spaces induce maps between endomorphism operads. To this end, recall the notion of a strong deformation retract.

\begin{defi}\label{def:sdr}
A \emph{strong deformation retraction} (SDR) from vector space $V$ to a vector space $B$ is a pair of operators
\[i\co B\hookrightarrow V\quad \text{and}\quad p\co B\twoheadrightarrow V\]
of even degree and an operator $s\co V\to V$ of odd degree, such that:
\newcounter{sdrconditions}
\begin{enumerate}
\item $di = id$ and $dp=pd$
\item $pi= \id_B$
\item $ds + sd = \id_V - ip$
\item $si = 0$ and $ps = 0$
\item $s^2 = 0$
\setcounter{sdrconditions}{\value{enumi}}
\end{enumerate}
If $V$ is equipped with a bilinear form $\langle, \rangle$ then it is also required that $B$ have a bilinear form $\langle,\rangle$ such that:
\begin{enumerate}
\setcounter{enumi}{\value{sdrconditions}}
\item $\langle ix,iy \rangle = \langle x,y\rangle$
\item $\Ker p \perp \im i$
\item $\langle sx,y \rangle = (-1)^{|x|}\langle x, sy \rangle$
\setcounter{sdrconditions}{\value{enumi}}
\end{enumerate}
\end{defi}
The conditions (4) and (5) are the so-called \emph{side conditions} and they are not always included in the definition of SDR data. The following result shows that they could always be imposed at no cost.
\begin{lem}
Let $V$ and $B$ be dg vector spaces and $(i,p,s)$ be maps satisfying conditions (1), (2) and (3) above and, in the presence of bilinear forms, conditions (6), (7) and (8). Then
the map $s$ can be replaced by $s^\prime$ in such a way so that $(i,p,s^\prime)$ is an SDR.
\end{lem}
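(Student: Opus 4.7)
My plan is to modify $s$ in two successive stages: first imposing the orthogonality conditions (4), then imposing the square-zero condition (5). Conditions (1), (2), (6) and (7) do not involve $s$, so they remain valid throughout.

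For the first stage, set $q := \id_V - ip$. From $pi = \id_B$ it follows that $q$ is idempotent and that $qi = 0 = pq$; from (1) it follows that $q$ commutes with $d$; and when a bilinear form is present, (6) and (7) imply that $ip$, and hence $q$, is self-adjoint (with respect to the decomposition $V = \im i \oplus \ker p$). Define $s_1 := q s q$. Then (4) is immediate, and since $q$ commutes with $d$ and is idempotent,
\[
ds_1 + s_1 d = q(ds + sd)q = q(\id - ip)q = q = \id - ip,
\]
so (3) is preserved. Self-adjointness of $s_1$ follows from that of $q$ and of $s$, so (8) is preserved as well.

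For the second stage, set $s' := s_1 d s_1$. Conditions (4) are inherited immediately. Short calculations using $p s_1 = 0$, $s_1 i = 0$ and $d^2 = 0$ yield $ds' = ds_1$ and $s'd = s_1 d$, so (3) is preserved; and the self-adjointness of $s'$ follows from that of $s_1$ together with the graded skew-adjointness of $d$ coming from the compatibility of $d$ with $\langle,\rangle$. The only genuinely nontrivial point is condition (5), and this is where the specific choice $s' = s_1 d s_1$ is essential. Using $ds_1 = \id - ip - s_1 d$ together with $p s_1 = 0$, one computes
\[
d s_1^2 = (ds_1) s_1 = s_1 - i(p s_1) - s_1 d s_1 = s_1 - s',
\]
and symmetrically $s_1^2 d = s_1 - s'$ using $s_1 i = 0$. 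Hence $d s_1^2 = s_1^2 d$, so that $d s_1^2 d = s_1^2 d^2 = 0$, and finally
\[
(s')^2 = s_1 (d s_1^2 d) s_1 = 0.
\]
I anticipate that the only step requiring a genuine idea is this last one; everything else is a sequence of formal manipulations whose only subtlety is the bookkeeping for the bilinear form, which reduces immediately to the self-adjointness of $q$ and $s_1$ and the graded skew-adjointness of $d$.
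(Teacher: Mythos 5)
Your proof is correct and follows essentially the same route as the paper: the paper's first replacement $\tilde{s}=(ds+sd)s(ds+sd)$ is literally your $s_1=qsq$ (since $ds+sd=\id_V-ip=q$ by condition (3)), and its second replacement $\hat{s}=\tilde{s}d\tilde{s}$ is your $s'=s_1ds_1$. The only difference is that you supply the verifications (including the bilinear-form bookkeeping) that the paper leaves to the reader, and these check out.
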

\begin{proof}
If condition (4) is not satisfied, replace $s$ with $\tilde{s} = (ds+sd)s(ds+sd)$ to obtain a triple $(i,p,\tilde{s})$ additionally satisfying condition (4): $\tilde{s}i = 0$ and $p\tilde{s} = 0$. If $(i,p,\tilde{s})$ satisfies condition (4) but not condition (5) then replace $\tilde{s}$ with $\hat{s} = \tilde{s}d\tilde{s}$ to obtain a triple $(i,p,\hat{s})$ additionally satisfying both (4) and (5).
\end{proof}

\begin{rem}
One can show that the notion a strong deformation retract is equivalent to that of an abstract Hodge decomposition, cf.~\cite{CL', CL''}.
\end{rem}

The following proposition shows that an SDR from $V$ to $B$ is precisely the data required to induce a quasi-isomorphism of operads $\E(B)\hookrightarrow \E(V)$ or (anti\nobreakdash-)cyclic operads $\Ecyc(B)\hookrightarrow \Ecyc(V)$.

\begin{prop}\label{prop:operadquasiiso}
Let $B, V$ be dg vector spaces. Then a pair $i\co B\hookrightarrow V$ and $p\co V\twoheadrightarrow B$ satisfying conditions (1) and (2) of a strong deformation retract induces a map of operads $\E(B)\hookrightarrow \E(V)$ which is a quasi-isomorphism if and only if there is $s\co V\to V$ such that $(i,p,s)$ is a strong deformation retract.

If $B$ and $V$ have weakly non-degenerate bilinear forms and the pair $i,p$ also satisfy conditions (6) and (7) of a strong deformation retract then there is an induced map of (anti\nobreakdash-)cyclic operads $\Ecyc(B)\hookrightarrow \Ecyc(V)$ which is a quasi-isomorphism if and only if there is $s\co V\to V$ such that $(i,p,s)$ is a strong deformation retract.
\end{prop}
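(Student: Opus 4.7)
First I would establish the setup. The map $j\co \E(B) \to \E(V)$ sends $f \in \Hom(B^{\otimes n}, B)$ to $i\circ f\circ p^{\otimes n}$; it is well-defined as an operad map precisely because $pi=\id_B$ (condition (2)), and respects the differential because $i,p$ are chain maps (condition (1)). Crucially it admits a graded linear retraction $r\co \E(V)(n)\to\E(B)(n)$, $g\mapsto p\circ g\circ i^{\otimes n}$, with $r\circ j=\id$. Setting $C(n):=\ker(r)$, one verifies $C(n)$ is a subcomplex and obtains a short exact sequence $0\to\E(B)(n)\to\E(V)(n)\to C(n)\to 0$ split as graded vector spaces. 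In the (anti\nobreakdash-)cyclic case, conditions (6) and (7) produce the orthogonal decomposition $V=\im(i)\oplus\ker(p)$ together with the adjoint identity $\langle ix,y\rangle_V=\langle x,py\rangle_B$; using the latter one checks that $j$ sends $\E_c(B)$ into $\E_c(V)$ and is cyclically equivariant, with an analogous cyclic retraction.

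For the forward direction, a standard calculation shows that an SDR $(i,p,s)$ on $V\to B$ induces an SDR on tensor powers $(i^{\otimes n},p^{\otimes n},s^{(n)})$, where $s^{(n)}=\sum_{k=1}^n(ip)^{\otimes(k-1)}\otimes s\otimes\id^{\otimes(n-k)}$ and the side conditions propagate (as do (6)--(8) in the cyclic case). Combining this SDR on $V^{\otimes n}$ with the SDR on $V$ (as codomain) yields an SDR of complexes $\E(V)(n)\to\E(B)(n)$ refining $j$ and $r$, with explicit homotopy built from $s$ and $s^{(n)}$. In particular $\id_{\E(V)(n)}-j\circ r$ is a boundary, so $j$ is a quasi-isomorphism at every arity. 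The same template works in the (anti\nobreakdash-)cyclic case once compatibility of the tensor-power SDR with the bilinear forms is checked.

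For the converse, assume $j$ is a quasi-isomorphism. The long exact sequence associated to the split short exact sequence above forces $C(n)$ to be acyclic, so in particular $C(1)\subset\End(V)$ is acyclic. The element $\eta:=\id_V-ip\in\End(V)$ is a cycle of degree $0$ and lies in $C(1)$ since $p\eta i=pi-(pi)(pi)=0$. By acyclicity there is $s_0\in C(1)$ of degree $-1$ with $ds_0+s_0d=\eta$, automatically satisfying $ps_0i=0$. The lemma preceding the statement upgrades $s_0$ to an $s$ satisfying conditions (3), (4), (5). In the cyclic setting, $\eta$ is additionally self-adjoint (one checks $(ip)^*=ip$ using (6) and (7)), so replacing $s_0$ with the symmetrization $\tfrac12(s_0+s_0^*)$ yields a self-adjoint preimage; the modifications $\tilde s=(ds+sd)s(ds+sd)$ and $\hat s=\tilde s\,d\,\tilde s$ from the lemma preserve self-adjointness, so condition (8) is also obtained.

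I expect the principal difficulty to lie in the cyclic converse, where one must simultaneously track the compatibility of $r$ with the cyclic action (so that acyclicity restricts to $C(1)\cap\Ecyc(V)(1)$), the self-adjointness of $\eta$, and the preservation of self-adjointness through the side-condition modifications, with extra subtleties of parity in the anti-cyclic case. The forward direction is largely formal once well-definedness of the induced tensor-power SDR and its compatibility with the cyclic structure are accepted.
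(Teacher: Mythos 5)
Your proposal is correct and follows essentially the same route as the paper: the same induced map $f\mapsto ifp^{\otimes n}$ with retraction $g\mapsto pgi^{\otimes n}$, the forward direction via chain homotopies built from $s$ (your $\Hom$--complex SDR with homotopy $s^{(n)}$ is exactly the paper's telescoping computation repackaged), and the converse extracted at arity one from $\id_V-ip$ being a boundary, then upgraded to a full SDR by the side-conditions lemma, with symmetrization $\tfrac12(s_0+s_0^*)$ handling condition (8) in the (anti-)cyclic case just as in the paper. The only differences are organizational (your splitting $\E(V)(n)\cong\E(B)(n)\oplus\ker r$ versus the paper's direct homology argument), and your explicit check that the modifications $\tilde s$, $\hat s$ preserve self-adjointness fills in a detail the paper leaves implicit.
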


\begin{proof}
Define maps $\E(B)(n)\hookrightarrow\E(V)(n), n=1,2,\dots$ by $f\mapsto ifp^{\otimes n}$; these clearly constitute a map of operads $\E(B)\hookrightarrow\E(V)$. Let us show that in the presence of bilinear forms this restricts to a map of cyclic operads $\E_c(B)\to \E_c(V)$ which will immediately imply the existence of a map of (anti\nobreakdash-)cyclic operads $\Ecyc(B)\hookrightarrow \Ecyc(V)$ as required.
Let $f\in\E_c(B)(n)$ and $f^*$ be the result of applying the cycle $(1\,2\,\dots\, n)$ to $f$.
It suffices to show that
\begin{equation}\label{eq:cyclic}
\langle if (p(x_1),\dots, p(x_n)),x_{n+1}\rangle=\pm\langle if^*(p(x_2),\dots,p(x_{n+1})),x_1\rangle
\end{equation}
where $x_1,\dots,x_n\in V$ and the $\pm$ is worked out from the Koszul sign rule. For clarity of presentation we will assume that $f$ and $x_i$s are all even, so in that case the sign $\pm$ disappears; the proof in the general case is the same, albeit messier. By our assumption there is an orthogonal decomposition $V\cong \Ker p\oplus\im i$. If $x_{1}\in \Ker p$ or $x_{n+1}\in\Ker p$ then both sides of equation \eqref{eq:cyclic} vanish. Therefore assume that $x_1, x_{n+1}\in\im i$ so that $x_1=i(w)$ and $x_{n+1}=i(z)$ for some $w,z\in B$. We have
\begin{align*}
\langle if (p(x_1),\dots, p(x_n)),x_{n+1}\rangle&=\langle if(pi(w),\dots, p(x_n)),i(z)\rangle\\
&=\langle f(w,\dots, p(x_n)),z\rangle\\
&=\langle f^*(p(x_2),\dots, p(x_n),z),w\rangle\\
&=\langle if^*(p(x_2),\dots, p(x_n),pi(z)),i(w)\rangle\\
&=\langle if^*(p(x_2),\dots,p(x_{n+1})),x_1\rangle
\end{align*}
as desired.

Assume there exists $s\co V\to V$ such that $(i,p,s)$ is a strong deformation retract. To show the map of operads $\E(B)\hookrightarrow \E(V)$ is a quasi-isomorphism we must show that the maps $\E(B)(n) \hookrightarrow \E(V)(n)$ given by $f\mapsto i f p^{\otimes n}$ are isomorphisms on homology. Since $p i = \id_{B}$, the maps $\E(V)(n)\twoheadrightarrow \E(B)(n)$ given by $f\mapsto p f i^{\otimes n}$ are left inverses. To show that these maps induce right inverses on homology, let $f\in \E(V)(n)$ be a cycle, then it is sufficient to show that $ip f(ip)^{\otimes n}$ is homologous to $f$ as an element in the chain complex $\E(V)(n)$. But $d(sf(ip)^{\otimes n}) = f(ip)^{\otimes n} - (ip)f(ip)^{\otimes n}$ so $ipf(ip)^{\otimes n}$ and $f(ip)^{\otimes n}$ are homologous as cycles in $\E(V)(n)$. Similarly, for $k< n$ we have \[d(f\circ (\id^{\otimes k} \otimes s\otimes (ip)^{\otimes n-k})) = f\circ (\id^{\otimes k+1}\otimes (ip)^{\otimes n-k-1}) - f\circ (\id^{\otimes k}\otimes (ip)^{\otimes n-k})\] so, by induction, $ipf(ip)^{\otimes n}$ is homologous to $f$ as required. The (anti\nobreakdash-)cyclic case is the same, except it is necessary that elements such as $sf(ip)^{\otimes n}$ are in $\Ecyc(V)(n)$, but this is true by condition (8) of a strong deformation retract.

Conversely, assume that the map of operads $\E(B)\hookrightarrow \E(V)$ is a quasi-isomorphism. Then in particular $ip\in E(V)(1)$ is homologous to the identity, in other words there is some $s\co V\to V$ such that $ds+sd = \id_V - ip$ as required. Similarly, if $V$ and $B$ have bilinear forms and the map of operads $\Ecyc(B)\hookrightarrow \Ecyc(V)$ is a quasi-isomorphism then $ip\in \Ecyc(V)(1)$ is homologous to the identity, in other words there is some $s\in \Ecyc(V)(1)$ such that $ds+sd = \id_V - ip$. By replacing $s$ with $\tilde{s} = (s+s^*)/2$, where $s^*$ is the action of the cycle $(1\, 2)$ on $s\in\Ecyc(V)(1)$, then $d\tilde{s}+\tilde{s} = \id_V - ip$ and $\langle \tilde{s}x,y\rangle = (-1)^{|x|}\langle x,sy \rangle$ as required.
\end{proof}

\begin{defi}\label{def:homotopyequiv}\
\begin{itemize}
\item
Let $\P$ be a cofibrant operad and $\P\to\E(A)$ be a $\P$--algebra. Then a $\P$--algebra $\P\to\E(B)$ is a \emph{model} for $A$ if there is an SDR from $A$ to $B$ such that the following diagram of operads is commutative up to homotopy:
\[
\xymatrix{\P\ar[r]\ar[dr]&\E(A)\\&\E(B)\ar@{^{(}->}[u]}
\]
\item
Similarly, let $\P$ be a cofibrant (anti\nobreakdash-)cyclic operad and $\P\to\Ecyc(A)$ be a $\P$--algebra. Then a $\P$--algebra $\P\to\Ecyc(B)$ is a \emph{model} for $A$ if there is an SDR from $A$ to $B$ such that the following diagram of (anti\nobreakdash-)cyclic operads is commutative up to homotopy:
\[
\xymatrix{\P\ar[r]\ar[dr]&\Ecyc(A)\\&\Ecyc(B)\ar@{^{(}->}[u]}
\]
\item
Two $\P$--algebras $A,A^\prime$ are homotopy equivalent if there exists a $\P$--algebra $B$ that is a model for both $A$ and $A^\prime$.
\end{itemize}
\end{defi}

\begin{defi}
Let $B$ be a model for a $\P$--algebra $A$. If $B$ has vanishing differential it is called a \emph{minimal model} for $A$.
\end{defi}

\begin{theorem}[Minimal model theorem]\label{thm:minmod}\
\begin{enumerate}
\item Let $\P$ be a cofibrant operad and $A$ a $\P$--algebra. Then there exists a minimal model for $A$ which is unique up to homotopy equivalence.
\item Let $\P$ be a cofibrant (anti\nobreakdash-)cyclic operad and $A$ a $\P$--algebra for which the induced form on $H(A)$ is non-degenerate. If $A$ admits an SDR onto $H(A)$ then there exists a minimal model for $A$ which is unique up to homotopy equivalence.
\end{enumerate}
\end{theorem}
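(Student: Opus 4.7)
The plan is to translate both existence and uniqueness into a lifting problem for operad maps via Proposition~\ref{prop:operadquasiiso}, exploiting the fact that $\P$ is cofibrant in the model category of (cyclic) operads of \cite{BeM,luc}.

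For existence, I would begin by choosing an SDR $(i,p,s)$ from $A$ onto $H(A)$. In part (1) such an SDR exists unconditionally over a field of characteristic zero; in part (2) this is explicitly a hypothesis. Proposition~\ref{prop:operadquasiiso} then supplies a quasi-isomorphism of operads $\E(H(A))\hookrightarrow\E(A)$ (respectively, of (anti\nobreakdash-)cyclic operads $\Ecyc(H(A))\hookrightarrow\Ecyc(A)$). Since every (cyclic) operad is fibrant for these model structures and $\P$ is cofibrant, weak equivalences induce bijections of homotopy classes of maps out of $\P$; hence the given structure map $\P\to\E(A)$ lifts, up to homotopy, through $\E(H(A))\hookrightarrow\E(A)$ to a map $f\co\P\to\E(H(A))$. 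The $\P$-algebra $(H(A),f)$ has vanishing differential and, together with the SDR above, satisfies the definition of a minimal model.

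For uniqueness, let $B_1,B_2$ be two minimal models of $A$, with SDRs from $A$ and structure maps $f_k\co\P\to\E(B_k)$ (respectively $\Ecyc(B_k)$). Since $B_k$ has zero differential, the projection $p_k\co A\to B_k$ is a quasi-isomorphism and yields an isomorphism $B_k\cong H(A)$; in the cyclic case, SDR condition (6) further identifies the inner products. Transporting structures, we may assume $B_1=B_2=H(A)$ as (inner-product) spaces, whereupon $f_1$ and $f_2$ are two maps $\P\to\E(H(A))$ whose composites with $\E(H(A))\hookrightarrow\E(A)$ are each homotopic to the given structure on $A$. The injectivity from the existence step forces $f_1\simeq f_2$ as operad maps. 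The identity triple $(\id,\id,0)$ is then a valid SDR from $B_2$ to $B_1$---conditions (6)--(8) being automatic on $H(A)$---and the homotopy $f_1\simeq f_2$ exhibits $B_1$ as a model for $B_2$ (and trivially for itself), verifying homotopy equivalence in the sense of Definition~\ref{def:homotopyequiv}.

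The main technical point requiring care is the passage from abstract homotopy of operad maps, supplied by the model category machinery, to the concrete up-to-homotopy commuting diagram demanded by the definition of a model: this requires an explicit cylinder object for $\P$ in the (cyclic) operad category and functoriality of the (cyclic) endomorphism operad construction with respect to strong deformation retracts. Once this translation is set up, the argument above is formal, but it is precisely the cyclic case that demands genuine new input beyond the standard non-cyclic minimal model story, because of the additional SDR conditions (6)--(8) that must be tracked through every step.
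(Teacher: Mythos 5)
Your existence argument coincides with the paper's: both choose an SDR onto $H(A)$, invoke Proposition~\ref{prop:operadquasiiso} to obtain a quasi-isomorphism $\E(H(A))\hookrightarrow\E(A)$ (resp.\ $\Ecyc(H(A))\hookrightarrow\Ecyc(A)$), and use cofibrancy of $\P$ plus fibrancy of all operads to lift the structure map up to homotopy. The gap is in uniqueness, at the sentence ``The injectivity from the existence step forces $f_1\simeq f_2$ as operad maps.'' The two minimal models $B_1,B_2$ come equipped with \emph{different} SDRs from $A$, and after identifying $B_1=B_2=H(A)$ these SDRs induce two a priori \emph{different} operad inclusions $\iota_1,\iota_2\co\E(H(A))\hookrightarrow\E(A)$. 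What you actually know is that $\iota_1\circ f_1$ and $\iota_2\circ f_2$ are each homotopic to the structure map of $A$. The injectivity on homotopy classes coming from cofibrancy of $\P$ lets you cancel only when both composites are taken with the \emph{same} weak equivalence; two distinct weak equivalences between the same operads need not be homotopic, so you cannot conclude $f_1\simeq f_2$ without first proving $\iota_1\simeq\iota_2$ as maps of (anti\nobreakdash-)cyclic operads. Nothing in your proposal addresses this.

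Proving exactly this statement---that the operad inclusion induced by an SDR is, up to operadic homotopy, independent of the chosen SDR---is the technical heart of the paper's proof, and it is where the cyclic case genuinely departs from the non-cyclic one. In the non-cyclic case the paper writes down an explicit interpolating inclusion $i(x)=i_0(x)+(i_1-i_0)(x)z-s_0i_1(x)\,dz$ into $A[z,dz]$, pairs it with the constant projection $p_0$, and so produces a homotopy $\E(H(A))\to\E(A)[z,dz]$ between the two induced maps. In the cyclic case one must in addition prove that this $i$ preserves the $\ground[z,dz]$--valued bilinear forms (a sign-sensitive computation using SDR conditions (4) and (8)), and the interpolating projection can no longer be taken constant: it is constructed as the $\ground[z,dz]$--linear orthogonal projection onto $\im i$ inside $A[z,dz]$. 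Your closing paragraph locates the difficulty in constructing a cylinder object for $\P$ and in functoriality of $\Ecyc$ under SDRs, but that is not where the problem lies; the abstract model-category machinery handles homotopies of operad maps perfectly well. The missing ingredient is the comparison of the inclusions induced by two different SDRs, and without it the uniqueness claim---in both parts, but especially part (2)---is unproved.
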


\begin{proof}
We first prove (1). Let $C$ be a complement to the space $\im d$ as a subspace of $\Ker d\subset A$, so that $\Ker d \cong \im d \oplus C$. Then $C\cong H(A)$ and so the inclusion $i\co C\cong H(A)\hookrightarrow A$ is a quasi-isomorphism since it is the identity on homology. Therefore there is a splitting $p\co A\twoheadrightarrow H(A)$ so that $pi=\id_{H(A)}$ and $ip$ induces the identity on homology, so is chain homotopic to $\id_A$, in other words there is a strong deformation retraction $(i,p,s)$ of $A$ onto $H(A)$. This induces a quasi-isomorphism of operads $\E(H(A))\hookrightarrow A$, in other words this map is invertible in the homotopy category of operads. Therefore there is a map $\P\to \E(H(A))$, unique up to homotopy, making $H(A)$ into a model for $A$.

We need to show that this construction does not depend, up to homotopy, on the choice of the complement to $\im d$ and the splitting $p$. Let $(i_0,p_0,s_0)$ and $(i_1,p_1,s_1)$ be two strong deformation retractions of $A$ onto $H(A)$ arising in this way. It is sufficient to show that the maps of operads $\E(H(A))\hookrightarrow \E(A)$ induced by these SDRs are homotopic as maps of operads. Note that, by construction, $p_1i_0 = p_0i_1 = \id_{H(A)}$. Define $i\co H(A)[z,dz] \to A [z,dz]$ by setting
 \begin{equation}\label{eq:i}
 i(x) = i_0(x) + (i_1-i_0)(x)z - s_0i_1(x)dz
 \end{equation}
 for $x\in H(A)$ and extending $\ground[z,dz]$--linearly Define $p\co A[z,dz] \to H(A)[z,dz]$ by setting $p(x) =p_0(x)$ for $x\in A$ and extending $\ground[z,dz]$--linearly. Then $pi = \id_{H(A)}$. Therefore, there is a map of operads $\E(H(A))\hookrightarrow \E(H(A))[z,dz]\to E(A)[z,dz]$. Moreover, upon setting $z=0$ we recover the map induced by the pair $(i_0,p_0)$ and upon setting $z=1$ we recover the map induced by the pair $(i_1,p_0)$, therefore these two maps are homotopic as maps of operads. By a similar argument, the maps of operads induced by $(i_1,p_0)$ and $(i_1,p_1)$ are also homotopic as maps of operads. This completes the proof of (1). As an aside, note that the pair $(i_1,p_0)$ could be viewed as part of an SDR and the pair $(i,p)$ is thus a homotopy between the two SDRs corresponding to $(i_0,p_0)$ and $(i_1,p_0)$.

We will now prove (2). By assumption, $A$ admits an SDR $(i_0,p_0,s_0)$ onto $H(A)$, without loss of generality we assume that $i_0\co H(A)\to A$ induces the identity map on homology. Recall from Proposition~\ref{prop:operadquasiiso} that this induces a map of (anti\nobreakdash-)cyclic operads $\Ecyc(H(A))\hookrightarrow E(A)$ which is a quasi-isomorphism. Therefore, as for part (1), this map is invertible in the homotopy category of (anti\nobreakdash-)cyclic operads and so there is a map $\P\to \Ecyc(H(A))$, unique up to homotopy, making $H(A)$ into a model $A$.

We need to show that this construction does not depend, up to homotopy, on the choice of an SDR. Let $(i_0, p_0, s_0)$ and $(i_1,p_1,s_1)$ be two SDRs, such that $i_0,i_1\co H(A)\to A$ induce the identity map on homology. As before it is sufficient to show that the maps of (anti\nobreakdash-)cyclic operads $\Ecyc(H(A))\hookrightarrow \Ecyc(A)$ induced by these SDRs are homotopic as maps of (anti\nobreakdash-)cyclic operads. Note that $H(A)[z,dz]$ and $A[z,dz]$ have $\ground[z,dz]$--bilinear forms defined by
 \[
 \langle x p, yq \rangle = (-1)^{\pi + |p||y|}\langle x, y \rangle pq
 \]
 where $x,y\in H(A)$ or $x,y\in A$, $p,q\in \ground[z,dz]$ and $\pi$ is the parity of the inner products on $A$ and $H(A)$ cf.~for example, \cite{Man}, for the notion of a superalgebra valued bilinear form.
 Define an inclusion $i\co H(A)[z,dz] \to A [z,dz]$ as before, by the same formula \eqref{eq:i}.
 \begin{lem}
 The map $i\co H(A)\to A[z,dz]$ preserves the $\ground[z,dz]$--bilinear forms.
 \end{lem}
 \begin{proof}
 Let $x,y\in A$. Then we have:
 \begin{align*}
 \langle i(x), i(y) \rangle &= \langle i_0(x), i_0(y) \rangle\\
 & + \left ( \langle i_0(x), i_1(y) \rangle  - \langle i_0(x), i_0(y) \rangle  + \langle i_1(x),i_0(y) \rangle - \langle i_0(x), i_0(y) \rangle \right ) z\\
 & + \left ( \langle i_1(x), i_1(y) \rangle - \langle i_1(x), i_0(y) \rangle - \langle i_0(x), i_1(y) \rangle + \langle i_0(x), i_0(y) \rangle \right ) z^2\\
 & + \left ( \langle i_0(x), s_0i_1(y) \rangle - \langle s_0i_1(x), i_0(y) \rangle \right )dz\\
 & + \left ( \langle i_0(x),s_0i_1(y) \rangle + (-1)^{\pi + |y|}\langle s_0i_1(x), i_0(y) \rangle\right )zdz\\
  & - \left ( \langle i_1(x), s_0i_1(y)\rangle + (-1)^{\pi + |y|}\langle s_0i_1(x), i_1(y) \rangle \right )zdz
 \end{align*}
 Since $i_0$ and $i_1$ induce the same maps on homology, there is $c\in A$ such that $i_1(y) = i_0(y) + dc$. Therefore \[\langle i_0(x), i_1(y) \rangle = \langle i_0(x), i_0(y) \rangle + \langle i_0(x), dc \rangle = \langle x, y \rangle\] since $i_0(x)$ is a cycle. Similarly, $\langle i_1(x), i_0(y) \rangle = \langle x, y \rangle$. It follows that the coefficients of $z$ and $z^2$ in $\langle i(x), i(y)\rangle $ vanish. By conditions (4) and (8) of Definition~\ref{def:sdr}, the coefficient of $dz$ also vanishes. For the same reason, the first two coefficients of $zdz$ above also vanish. It remains to show
 \[\langle i_1(x), s_0i_1(y)\rangle + (-1)^{\pi + |y|}\langle s_0i_1(x), i_1(y) \rangle=\langle i_1(x), s_0i_1(y)\rangle + (-1)^{\pi + |y|+|x|}\langle i_1(x), s_0i_1(y) \rangle\]
 is zero. If $|y| + |x| - 1$ is of opposite parity to $\pi$ then both these terms vanish separately. Otherwise, they cancel each other.
\end{proof}

 Note that the given $\ground[z,dz]$--bilinear form on $H(A)[z,dz]$ is non-degenerate in the sense that it induces a $\ground[z,dz]$--linear isomorphism $H(A)[z,dz]\to H(A)^*[z,dz]$. Therefore there is an orthogonal decomposition $A[z,dz]\cong \im i \oplus (\im i)^{\perp}$ and we define $p\co A[z,dz]\to H(A)[z,dz]$ to be the corresponding $\ground[z,dz]$--linear orthogonal projection onto $\im i$. By uniqueness of orthogonal decompositions, $p|_{z=0}= p_0$ and $p|_{z=1}=p_1$. The pair $(i,p)$ yields a map of (anti\nobreakdash-)cyclic operads $\Ecyc(H(A))\hookrightarrow \Ecyc(H(A))[z,dz]\to \E(A)[z,dz]$ and furthermore at $z=0$ we recover the map induced by the pair $(i_0,p_0)$ and at $z=1$ we recover the map induced by the pair $(i_1,p_1)$ and so they are homotopic as maps of (anti\nobreakdash-)cyclic operads as required.
 \end{proof}

\begin{rem}
If a space $A$ with a bilinear form is finite-dimensional, an SDR onto $H(A)$ always exists. In infinite-dimensional cases it often exists too. For example, if $A=\Omega(M)$, the de~Rham algebra on a smooth oriented compact manifold, then the geometric Hodge decomposition on $M$ gives, essentially, an SDR onto $H(A)$, see \cite[Example 2.9 (2)]{CL''}.
\end{rem}

\begin{rem}
Theorem~\ref{thm:minmod} (1) is by now rather well known and we give a proof here mainly for comparison with part (2). The \emph{decomposition theorem} for cyclic algebras was proved in \cite{CL''} from which the existence of cyclic minimal models was deduced, however it does not immediately follow from this that cyclic minimal models are unique up to homotopy equivalence, as $\P$--algebra structures on the homology.

The proof of uniqueness of minimal models given here is slightly different than that usually found in the literature; more commonly one defines the notion of infinity quasi-isomorphism of $\P$--algebras and observes that a minimal model of $A$ is infinity quasi-isomorphic to $A$. Then it follows that any two minimal models are infinity isomorphic (and hence homotopy equivalent) after observing that infinity quasi-isomorphisms are always invertible, in an appropriate sense. However, we have taken a different approach since this argument does not work in part (2), the cyclic case, because cyclic infinity quasi-isomorphisms are not necessarily invertible.
 \end{rem}

\begin{cor}\
\begin{enumerate}
\item Let $\P$ be a cofibrant operad and $A$ and $A'$ be $\P$--algebras. Then $A$ and $A'$ are homotopy equivalent if and only if they have homotopy equivalent minimal models.
\item Let $\P$ be a cofibrant (anti\nobreakdash-)cyclic operad and $A$ and $A'$ be $\P$--algebras admitting SDRs onto their homology and for which the induced forms on homology are non-degenerate. Then $A$ and $A'$ are homotopy equivalent if and only if they have homotopy equivalent minimal models.
\end{enumerate}
\end{cor}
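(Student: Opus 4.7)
The plan is to reduce both directions of both statements to the uniqueness of minimal models in Theorem~\ref{thm:minmod} via a single transitivity property of the ``model'' relation: if $C$ is a model for $B$ and $B$ is a model for $A$, then $C$ is a model for $A$. The key point is that SDRs compose. Given SDRs $(i_1,p_1,s_1)\co A\to B$ and $(i_2,p_2,s_2)\co B\to C$, the triple $(i_1 i_2,\,p_2 p_1,\,s_1 + i_1 s_2 p_1)$ satisfies conditions (1)--(3) of Definition~\ref{def:sdr}, the side conditions (4)--(5) can then be arranged as in the lemma following Definition~\ref{def:sdr}, and in the (anti-)cyclic case the axioms (6)--(8) pass through using self-adjointness of the differential and orthogonality. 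The commutative-up-to-homotopy diagram of (anti-)cyclic operads for the composite is obtained by pasting the two given such diagrams.

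Given this transitivity, the backward direction is immediate in both (1) and (2). If $M_A$ and $M_{A'}$ share a common model $N$, then since $M_A$ is a model for $A$ and $N$ is a model for $M_A$, transitivity gives that $N$ is a model for $A$; symmetrically, $N$ is a model for $A'$. Hence $A$ and $A'$ share the common model $N$, so are homotopy equivalent.

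For the forward direction, suppose $A$ and $A'$ share a common model $B$. In case (1), $B$ automatically has a minimal model $M_B$ by Theorem~\ref{thm:minmod}(1). In case (2), I must first construct an SDR of $B$ onto $H(B)$ in order to apply Theorem~\ref{thm:minmod}(2). Starting from SDRs $(i_1,p_1,s_1)\co A\to B$ and $(j,q,t)\co A\to H(A)$, set $\tilde\imath := p_1 j$, $\tilde p := q i_1$, and $\tilde s := p_1 t i_1$. Using $qd=0=dj$ (consequences of $H(A)$ having zero differential) and the SDR relations for $(j,q,t)$, a direct calculation gives $\tilde p\tilde\imath = \id_{H(A)}$ and $\id_B - \tilde\imath\tilde p = d\tilde s + \tilde s d$. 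The side conditions, and in the cyclic case the isometry and orthogonality axioms for $\tilde\imath$, are then arranged using self-adjointness of $d$ together with the corresponding axioms for the given SDRs. Theorem~\ref{thm:minmod}(2) now provides a minimal model $M_B$ of $B$, which by the transitivity above is also a minimal model for $A$ and for $A'$. Uniqueness of minimal models yields homotopy equivalences $M_A \simeq M_B \simeq M_{A'}$.

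The main technical obstacle is the bookkeeping in the (anti-)cyclic setting: verifying that the composed SDR respects the isometry and orthogonality axioms and that the homotopy between operad maps really lives in the category of (anti-)cyclic operads, not merely of operads. A secondary subtlety is that the common model $B$ in the definition of homotopy equivalence need not a priori admit an SDR onto $H(B)$, so this must be constructed by combining the two given SDRs as above; this is the only place where the full strength of the hypothesis that $A$ and $A'$ admit SDRs onto their homology is used.
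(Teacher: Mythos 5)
Your proof is correct and follows essentially the same route as the paper's: both directions reduce to transitivity of the model relation (composition of SDRs plus pasting of the homotopy-commutative operad diagrams) together with the uniqueness clause of Theorem~\ref{thm:minmod}. You are in fact more thorough than the paper's own two-line argument, which in case (2) invokes ``the minimal model of $B$'' without checking that the common model $B$ admits an SDR onto $H(B)$ as Theorem~\ref{thm:minmod}(2) requires; your combination $(\tilde\imath,\tilde p,\tilde s)=(p_1 j,\,q i_1,\,p_1 t i_1)$ does supply this, with the caveat that verifying the orthogonality and self-adjointness axioms (7)--(8) of Definition~\ref{def:sdr} for it uses the side condition $s_1 i_1=0$ on the input SDR, which the lemma following Definition~\ref{def:sdr} allows you to assume.
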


\begin{proof}
Let $B$ be the minimal model of $A$. If $B$ is homotopy equivalent to the minimal model of $A'$ then $B$ is clearly also a model for $A'$.

Conversely let $B$ be a model, not necessarily minimal, for $A$ and $A'$. Then the minimal model of $B$ is also a model for $A$ and $A'$ so, by uniqueness up to homotopy of minimal models, is homotopy equivalent to the minimal models of both $A$ and $A'$.
\end{proof}

\begin{rem}\label{rem:inftyhomotopyequiv}
Two minimal (cyclic) $L_\infty$~algebras are homotopy equivalent if and only if they are (cyclic) $L_\infty$~isomorphic (by, essentially, Theorem~\ref{thm:ssthm} together with Remark~\ref{rem:linftygaugeiso}).
\end{rem}

\section{Tensor products of operadic algebras}\label{app:tensor}
In this section we explain how to form tensor products of operadic algebras, generalizing the tensor products of $L_\infty$~algebras and cdgas, discussed in Section~\ref{sec:tensors}.

For any operad $\O$ there exists a canonical map \begin{equation}\label{eq:tensor}\phi_\O\co\B\Com\to\O\otimes \B\O\end{equation} cf.~\cite[Proposition 3.2.18]{GiK}. If $\O$ is cyclic or anti-cyclic, then \eqref{eq:tensor} is a map of anti-cyclic operads. Further, \eqref{eq:tensor} can be used to define the structure of an (odd cyclic) $L_\infty$~algebra on a tensor product of an $\O$--algebra and a $\B\O$--algebra. For example, the tensor product of an odd cyclic $L_\infty$~algebra and a cyclic cdga has a structure of an odd cyclic $L_\infty$~algebra, a fact that was used in Section~\ref{sec:tensors}.

\begin{lem}
Let $\O$ be a Koszul operad. There exists a map of operads $\Phi_{\O}\co\B\Com\to\B\O\otimes\B\Lambda\O^!$, unique up to homotopy, making commutative the following diagram:
\[
\xymatrix{&\B\Lambda\O^!\otimes\B\O\ar^{p_{\O}\otimes \id}@{->>}[d]
\\
\B\Com\ar^{\Phi_{\O}}[ur]\ar_{\phi_{\O}}[r]&\O\otimes\B\O}\]
If $\O$ is a cyclic or anti-cyclic operad then $\Phi_\O$ can be chosen to be a map of anti-cyclic operads.
\end{lem}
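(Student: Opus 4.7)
The plan is to obtain $\Phi_\O$ as a lift of $\phi_\O$ along $p_\O\otimes\id$ in the closed model category of (anti-)cyclic operads. First I would observe that since $\O$ is Koszul, the map $p_\O\co\B\Lambda\O^!\to\O$ is a quasi-isomorphism of dg operads by definition. Because the ground field has characteristic zero, tensoring component-wise with $\B\O$ preserves this quasi-isomorphism (Künneth), so $p_\O\otimes\id$ is a quasi-isomorphism in every arity. It is also component-wise surjective since $p_\O$ is, this being a standard feature of the counit of the bar-cobar adjunction. Hence $p_\O\otimes\id$ is an acyclic fibration in the Berger--Moerdijk closed model category of reduced dg operads.

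Second, the operad $\B\Com$ is a cobar construction, hence freely generated as an operad with differential, and therefore cofibrant. The lifting axiom of the model category then produces a map $\Phi_\O\co\B\Com\to\B\Lambda\O^!\otimes\B\O$ with $(p_\O\otimes\id)\circ\Phi_\O=\phi_\O$, and the standard uniqueness-of-lifts result guarantees that any two such lifts are left-homotopic, i.e.~unique up to homotopy.

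Third, for the anti-cyclic refinement I would invoke the closed model category structure on reduced cyclic operads constructed in \cite{luc}. The parity bookkeeping recalled at the start of the appendix shows that, whether $\O$ is cyclic or anti-cyclic, both $\B\Lambda\O^!\otimes\B\O$ and $\O\otimes\B\O$ are anti-cyclic operads and that $\phi_\O$ and $p_\O\otimes\id$ are maps of anti-cyclic operads. Moreover $\B\Com$ is cofibrant in this anti-cyclic model category for the same reason as above, since the cobar construction is free. The lifting argument of the previous paragraph then applies verbatim in the anti-cyclic setting, producing $\Phi_\O$ as a map of anti-cyclic operads, again unique up to homotopy of such maps.

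The main obstacle I anticipate is checking the foundational point that $p_\O\otimes\id$ is genuinely a fibration in both the plain and the (anti-)cyclic model category, i.e.\ that $p_\O$ is component-wise surjective and that this surjectivity together with its cyclic equivariance is preserved by the model structure of \cite{luc}. Once this is in place the rest of the argument is abstract nonsense from model category theory; the content of the lemma really lies in fitting the bar-cobar resolution of $\O$ into the model-categorical framework of (anti-)cyclic operads.
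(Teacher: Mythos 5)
Your proposal is correct and follows essentially the same route as the paper: the paper's proof likewise notes that $p_\O$ (hence $p_\O\otimes\id$) is an acyclic fibration in the model structures on reduced operads and reduced cyclic operads, and that cofibrancy of $\B\Com$ yields existence and homotopy-uniqueness of the lift by standard model category arguments. The extra details you supply (K\"unneth for the quasi-isomorphism, surjectivity of $p_\O$, freeness of the cobar construction) are exactly the points the paper leaves implicit.
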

\begin{proof}
We use the model structures on reduced operads and reduced cyclic operads. Note that the map $p_\O$ is an acyclic fibration and therefore so is the map $p_\O\otimes\id$. Since the operad $\B\Com$ is cofibrant the lifting $\Phi_{\O}$ must exist and is unique up to homotopy by standard model structure arguments.
\end{proof}
The above lemma allows us to construct an $L_\infty$~algebra structure, well defined up to homotopy, on a tensor product of a $\B\Lambda \O^!$--algebra and a $\B\O$--algebra, extending the case of a tensor product of an $\O$--algebra and a $\B\O$--algebra. Indeed, if $V$ is a $\B\O$--algebra corresponding to an operad map $f\co\B\O\to\E(V)$ and $A$ is a $\B\Lambda\O^!$--algebra corresponding to an operad map $g\co\B\Lambda\O^!\to\E(A)$ then there is a map \[(g\otimes f)\circ\Phi_{\O}\co\B\Com\to\E(A)\otimes\E(V)\to
\E(A\otimes V)\] specifying an $L_\infty$~structure on $A\otimes V$. This construction depends on the choice of $\Phi_{\O}$ but different choices lead to homotopic operad maps and therefore $L_\infty$~isomorphic $L_{\infty}$~algebra structures on $A\otimes V$.

Similarly, let $\O$ be a cyclic operad, $V$ be a space with a weakly non-degenerate odd bilinear form and $f\co \B\O\to \Ecyc(V)$ be a $\B\O$--algebra structure on $V$. Then given a space $A$ with a weakly non-degenerate even bilinear form and $g\co\B\Lambda\O^!\to \Ecyc(A)$ a $\B\Lambda\O^!$--algebra structure on $A$, there is a map \[(g\otimes f)\circ\Phi_\O\co \B\Com \to \Ecyc(A)\otimes \Ecyc(V) \to \Ecyc(A\otimes V)\] in other words an odd cyclic $L_\infty$~structure on $A\otimes V$, well defined up to homotopy of anti-cyclic operad maps (and hence well defined up to cyclic $L_\infty$~isomorphism). If $\O$ is anti-cyclic, then $\Lambda\O^!$ is cyclic and the same construction applies.

We will now show that our construction of a tensor product is homotopy invariant in each argument.
\begin{theorem}\label{thm:homotopyequiv}
Let $\O$ be a Koszul operad and $A,A^\prime$ be homotopy equivalent $\B\Lambda\O^!$--algebras and $V$ be an $\B\O$--algebra. Then the $L_\infty$~algebras $A\otimes V$ and $A^{\prime}\otimes V$ are homotopy equivalent.

In the case $\O$ is a cyclic or anti-cyclic Koszul operad then the \emph{odd cyclic} $L_\infty$~algebras $A\otimes V$ and $A'\otimes V$ are homotopy equivalent.
\end{theorem}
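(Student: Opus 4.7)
The plan is to show that if $B$ is a common $\B\Lambda\O^!$-algebra model for $A$ and $A'$, then $B\otimes V$ is an $L_\infty$-algebra model for both $A\otimes V$ and $A'\otimes V$, which by Definition~\ref{def:homotopyequiv} gives the required homotopy equivalence. So assume we are given SDRs $(i,p,s)$ from $A$ to $B$ and $(i',p',s')$ from $A'$ to $B$ together with homotopy-commutative triangles of $\B\Lambda\O^!$-algebra structure maps.

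First I would verify the elementary fact that the tensored maps $(i\otimes\id_V,\,p\otimes\id_V,\,s\otimes\id_V)$ form an SDR from $A\otimes V$ to $B\otimes V$; in the cyclic case the axioms (6)--(8) of Definition~\ref{def:sdr} for the tensor product bilinear form follow directly from those for $A$ together with the Koszul sign rule. By Proposition~\ref{prop:operadquasiiso} this yields a quasi-isomorphism of (cyclic) endomorphism operads $\E(B\otimes V)\hookrightarrow\E(A\otimes V)$, and moreover the square
\[
\xymatrix{\E(B)\otimes\E(V)\ar@{^{(}->}[r]\ar[d] & \E(A)\otimes\E(V)\ar[d]\\
\E(B\otimes V)\ar@{^{(}->}[r] & \E(A\otimes V)}
\]
commutes \emph{strictly}: an element $\phi\otimes\psi$ evaluated on $a_1\otimes v_1,\dots,a_n\otimes v_n$ gives $i\phi(p(a_1),\dots,p(a_n))\otimes\psi(v_1,\dots,v_n)$ along either route.

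Next, using the construction of \S B, the $L_\infty$-structures on $A\otimes V$ and $B\otimes V$ factor as
\[
\B\Com\xrightarrow{\Phi_\O}\B\Lambda\O^!\otimes\B\O\xrightarrow{g_A\otimes f}\E(A)\otimes\E(V)\to\E(A\otimes V),
\]
and likewise for $B\otimes V$ with $g_A$ replaced by $g_B$. The strictly commutative square above then identifies the composite $\B\Com\to\E(B\otimes V)\hookrightarrow\E(A\otimes V)$ with the map obtained from $(\E(B)\hookrightarrow\E(A))\circ g_B$ tensored with $f$ and composed with $\Phi_\O$ and the natural map. Since $B$ is a model for $A$, the operad maps $g_A$ and $(\E(B)\hookrightarrow\E(A))\circ g_B$ are homotopic; tensoring this homotopy with $f$ and post-composing with $\E(A)\otimes\E(V)\to\E(A\otimes V)$ and pre-composing with $\Phi_\O$ shows that the required triangle
\[
\xymatrix{\B\Com\ar[r]\ar[rd] & \E(A\otimes V)\\ & \E(B\otimes V)\ar@{^{(}->}[u]}
\]
commutes up to homotopy. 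Applying the same argument with $A'$ in place of $A$ establishes $B\otimes V$ as a common model. In the cyclic case the same reasoning goes through, replacing $\E$ with $\Ecyc$ throughout and using that $\Phi_\O$ is a map of anti-cyclic operads.

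The main obstacle is largely bookkeeping: ensuring that the natural map $\E(A)\otimes\E(V)\to\E(A\otimes V)$ (respectively its cyclic analogue) intertwines the SDR-induced maps strictly rather than only up to homotopy, and that the tensored SDR satisfies the cyclic axioms. Once these structural compatibilities are in place, the homotopy of operad maps transferred from the model structure on $A$ propagates directly to the tensor product, so no delicate obstruction argument is needed.
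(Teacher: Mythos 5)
Your proposal is correct and follows essentially the same route as the paper: take a common model $B$ for $A$ and $A'$, tensor everything with $V$, and conclude that $B\otimes V$ is a common model for $A\otimes V$ and $A'\otimes V$. The details you supply --- that $(i\otimes\id_V,p\otimes\id_V,s\otimes\id_V)$ is an SDR satisfying the cyclic axioms, and that the square relating $\E(B)\otimes\E(V)\to\E(A)\otimes\E(V)$ to $\E(B\otimes V)\to\E(A\otimes V)$ commutes strictly --- are exactly the steps the paper's terse proof leaves implicit when it says the tensored diagram is homotopy commutative.
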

\begin{proof}
Since $A$ and $A^\prime$ are homotopy equivalent, there is a common model $B$ for both, so the following diagram of operads is commutative up to homotopy.
\[
\xymatrix
{
&\B\Lambda\O^!\ar[dl]\ar[d]\ar[dr]\\
\E(A)&\E(B)\ar@{_{(}->}[l]\ar@{^{(}->}[r]&\E(A^\prime)
}
\]
Tensoring $A,A^\prime$ and $B$ with $V$, we obtain a homotopy commutative diagram
\[
\xymatrix
{
&\B\Com\ar^{\Phi_\O}[d]\\
&\B\Lambda\O^!\otimes\B\O\ar[dl]\ar[d]\ar[dr]\\
\E(A\otimes V)&\E(B\otimes V)\ar@{_{(}->}[l]\ar@{^{(}->}[r]&\E(A^\prime\otimes V)
}
\]
Thus, the $L_\infty$~algebras $A\otimes V$ and $A^{\prime}\otimes V$ are homotopy equivalent.

The case when $\O$ is cyclic or anti-cyclic is proved in the same way, by replacing the endomorphism operads with cyclic or anti-cyclic endomorphism operads, as appropriate.
\end{proof}

\end{document}